\documentclass[12pt]{amsart}

\usepackage{graphicx}
\usepackage[fontsize=14pt]{scrextend}
\usepackage{amsthm}
\usepackage{url}
\usepackage{subcaption}
\usepackage{caption}
\usepackage{enumitem}
\makeatletter
\renewcommand\@biblabel[1]{}
\makeatother

\newtheorem{theorem}{Theorem}
\newtheorem{prop}[theorem]{Proposition}
\newtheorem{lemma}[theorem]{Lemma}
\newtheorem{cor}[theorem]{Corollary}
\newtheorem{conj}[theorem]{Conjecture}
\theoremstyle{definition}
\newtheorem{definition}[theorem]{Definition}
\newtheorem{example}[theorem]{Example}
\theoremstyle{remark}
\newtheorem{remark}[theorem]{Remark}

\numberwithin{theorem}{section}

\newcommand{\eps}{\varepsilon}
\newcommand{\paren}[1]{\left(#1\right)}

\newcommand{\bracket}[1]{\left[#1\right]}
\newcommand{\abs}[1]{\left\lvert#1\right\rvert}
\newcommand{\mb}{\mathbb}
\newcommand{\del}{\partial}
\newcommand{\pdif}[2]{\frac{\del#1}{\del#2}}
\newcommand{\R}{\mathbb{R}}

%    Blank box placeholder for figures (to avoid requiring any
%    particular graphics capabilities for printing this document).

\begin{document}

\title{Double Bubbles on the Real Line with Log-Convex Density}

\author[Bongiovanni \emph{et al.}]{Eliot Bongiovanni,  Leonardo Di Giosia, Alejandro Diaz, Jahangir Habib,  Arjun Kakkar, Lea Kenigsberg, Dylanger Pittman,  Nat Sothanaphan, Weitao Zhu}

\begin{abstract}
The classic double bubble theorem says that the least-perimeter way to enclose and separate two prescribed volumes in $\R^N$ is the standard double bubble.
We seek the optimal double bubble in $\R^N$ with density, which we assume to be strictly log-convex. For $N=1$ we show that the solution is sometimes two contiguous intervals and sometimes three contiguous intervals. In higher dimensions we think that the solution is sometimes a standard double bubble and sometimes concentric spheres (e.g. for one volume small and the other large).
\end{abstract}

\maketitle

\setcounter{tocdepth}{1}
\tableofcontents

\section{Introduction}
The double bubble theorem (see \cite[Chapt. 14]{M}) says that in $\R^N$ the standard double bubble of Figure \ref{fig:standarddb},
consisting of three spherical caps meeting at 120 degrees, provides the least-perimeter way to enclose and separate two given volumes.
We want to put density on $\R^N$. We focus on strictly log-convex, $C^1$ radial densities, for which
the sphere about the origin is stable and indeed, for $C^3$ densities, uniquely the best single bubble by Chambers' \cite{Ch} recent proof of the log-convex density conjecture.
In $\R^2$, McGillivray \cite{MG} recently extended Chambers' results to arbitrary (not necessarily smooth) radial log-convex densities.

\begin{figure}[h!]
  \includegraphics[width=0.5\textwidth]{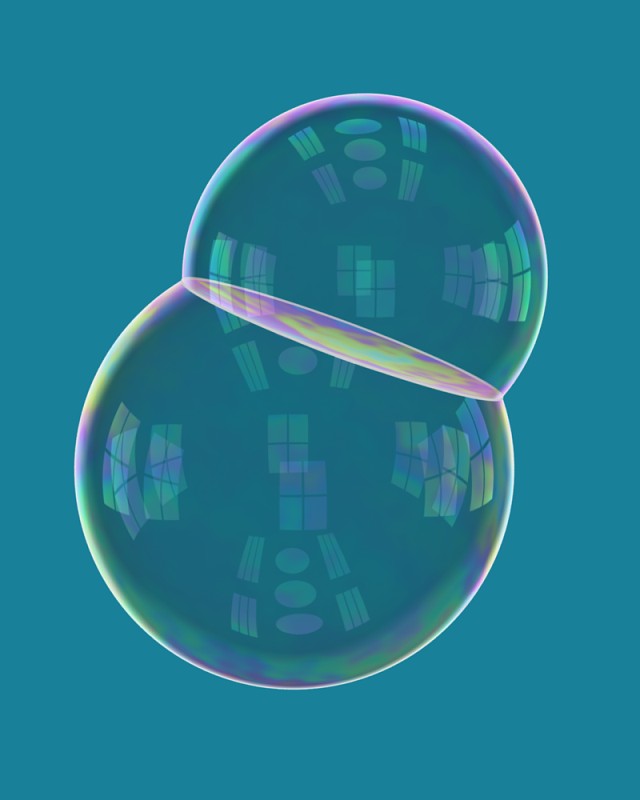}
\caption{The standard double bubble provides the least-perimeter way to enclose and separate two given volumes in $\R^3$ with density 1. We consider more general densities. Image from John M. Sullivan, \protect\url{http://www.math.uiuc.edu/~jms/Images}.}
\label{fig:standarddb}
\end{figure}

For the case of $\R^1$ $(N=1)$, 
we show that there are two types of optimal double bubbles, as illustrated in Figure \ref{fig:1ddbeq}.

\begin{figure}[h!]
  \includegraphics[width=0.7\textwidth]{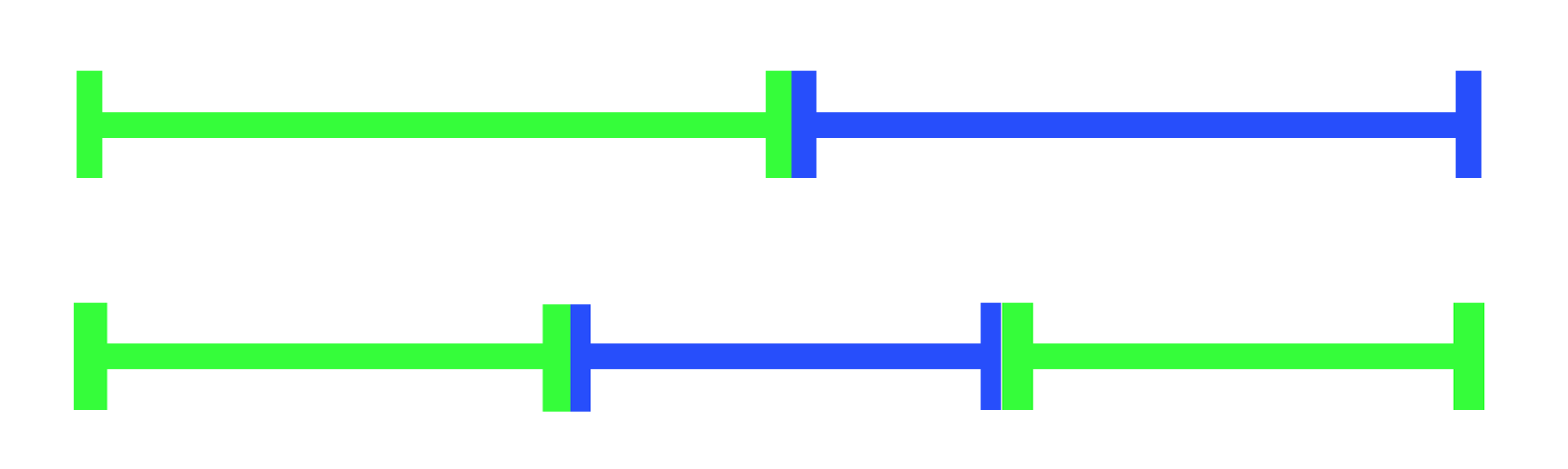}
\caption{Perimeter-minimizing double bubbles
in $\R$ with strictly log-convex $C^1$ symmetric density
can consist of two or three contiguous intervals.}
\label{fig:1ddbeq}
\end{figure}

\begin{prop}[Prop. \ref{lem:db-equalvol}]
On $\R$ with a symmetric, strictly log-convex, $C^1$ density, for equal prescribed volumes, the perimeter-minimizing double bubble is a connected double interval, symmetric about the origin.
\end{prop}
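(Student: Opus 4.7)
My plan combines first-order variational conditions with a short case analysis made finite by strict log-convexity. First I would invoke existence and regularity for weighted-perimeter-minimizing double bubbles on $\R$ (standard from lower semicontinuity of the weighted perimeter and the fact that $f$ is $C^1$ and positive; a bubble of bounded volume is forced into a compact set since $f\to\infty$ at infinity for a strictly log-convex symmetric density) to reduce to a minimizer consisting of finitely many closed intervals with finitely many boundary points. Introducing Lagrange multipliers $\lambda_1,\lambda_2$ for the two volume constraints, the first-order condition at each boundary point reads $f'(x)=\pm\lambda_i$ where $R_i$ abuts the complement, and $f'(x)=\pm(\lambda_1-\lambda_2)$ at an $R_1$--$R_2$ interface, with signs determined by orientation.

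Strict log-convexity is what makes the problem finite: from $(\log f)''>0$ and $f>0$ one computes $f''>0$, so $f'$ is strictly increasing on all of $\R$ and odd by symmetry. Consequently each of the six possible Lagrange equations has at most one root, which bounds the total number of boundary points by six. In addition, any component of $R_i$ flanked on both sides by the complement has endpoints satisfying $f'(x)=\pm\lambda_i$, and since $f'$ is odd, such a component must be symmetric about the origin; this immediately rules out a two-disjoint-intervals configuration (which would force $R_1$ and $R_2$ to overlap). After using the reflection-plus-swap symmetry of the equal-volume problem to pass to a minimizer with $\lambda_1=\lambda_2$, the only surviving candidates are the adjacent two-interval $R_1R_2$ and the symmetric sandwiches $R_1R_2R_1$ and $R_2R_1R_2$.

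Each surviving candidate is uniquely determined by the constraint $v_1=v_2=v$: the two-interval is $[-v,0]\cup[0,v]$ with perimeter $2f(v)+f(0)$, and the sandwich is $[-v,-v/2]\cup[-v/2,v/2]\cup[v/2,v]$ with perimeter $2f(v)+2f(v/2)$. Strict log-convexity and symmetry of $f$ force $f$ to attain a strict minimum at the origin, so $0<f(0)<f(v/2)$, and
\[
2f(v)+f(0)\;<\;2f(v)+2f(v/2),
\]
so the adjacent two-interval wins. The hardest part will be rigorously excluding configurations containing gaps and justifying the reduction to $\lambda_1=\lambda_2$ without a priori uniqueness of minimizers; both should follow from the reflection-plus-swap symmetry of the equal-volume problem together with the strict monotonicity of $f'$, but the enumeration needs to be done systematically.
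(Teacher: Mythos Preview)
Your final comparison---$2f(a)+f(0)<2f(a)+2f(b)$ because $0<f(0)<f(b)$---is exactly the paper's endgame, and the overall architecture (reduce to two explicit candidates, then compare) is the same. But two steps in your plan are not correct as written.

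First, you conflate position and weighted volume in the last paragraph. You label the equilibrium double interval as $[-v,0]\cup[0,v]$ and the sandwich as $[-v,-v/2]\cup[-v/2,v/2]\cup[v/2,v]$ with $v$ the prescribed \emph{volume}; in positional coordinates the endpoints are not $\pm v,\pm v/2$ but the points $a,b$ with $\int_0^a f=v$ and $\int_0^b f=v/2$. The paper makes your formulas literally correct by first passing to the volume coordinate $V=\int_0^x f$, in which the density becomes a convex function of $V$ and the endpoints of the equal-volume candidates are genuinely $\pm V_1,\pm V_1/2$. Without that change of variable the perimeter expressions you wrote are simply wrong.

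Second, the step ``pass to a minimizer with $\lambda_1=\lambda_2$'' is not sound, and is in fact inconsistent with your own list of survivors. For the symmetric sandwich $R_2R_1R_2$ with boundary points $-y_2<-y_1<y_1<y_2$, stationarity gives $(\log f)'(y_2)=\lambda_2$ and $(\log f)'(y_1)=\lambda_1-\lambda_2$, so $\lambda_1\neq\lambda_2$ whenever $y_1>0$; yet you list the sandwiches among the configurations surviving the $\lambda_1=\lambda_2$ filter. Reflection-plus-swap sends $R_2R_1R_2$ to $R_1R_2R_1$, so it fixes neither sandwich and cannot force $\lambda_1=\lambda_2$ without an independent uniqueness argument---which is precisely what you flagged as ``the hardest part.'' The paper bypasses Lagrange multipliers entirely: a sliding argument shows the components are contiguous and number at most three, and then the single scalar condition $\sum_i(\log f)'(x_i)=0$ together with strict monotonicity of $(\log f)'$ pins down the two-component equilibrium uniquely (hence symmetric) and forces the three-component case to be the symmetric triple interval. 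That route is shorter and avoids both of the issues above.
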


\begin{prop}[Prop. \ref{lem:db-v1fixedv2large}]
On $\R$ with symmetric, strictly log-convex, $C^1$ density $f$ such that $(\log f)'$ is unbounded,
for given $V_1>0$, for sufficiently large $V_2$,
the least-perimeter double bubble is a symmetric interval of volume $V_1$ flanked by two contiguous intervals of volume $V_2/2$.
\end{prop}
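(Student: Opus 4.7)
The plan is to invoke the structural classification of double bubble minimizers developed earlier in the paper---that the minimizer is, up to reflection, either a connected two-interval configuration or a three-interval symmetric configuration with one region centered---and to compare the perimeters of these candidates asymptotically as $V_2 \to \infty$, showing that the target configuration wins.

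I would change to $F$-coordinates: let $F(x) = \int_0^x f(s)\,ds$ and $g(v) = f(F^{-1}(v))$. Then $g \in C^1(\R)$ with $g'(v) = (\log f)'(F^{-1}(v))$; hence $g$ is even (by symmetry of $f$), strictly convex (by strict log-convexity), and, since $(\log f)'$ is unbounded, $g'$ is unbounded and $g$ is superlinear. In these coordinates the perimeter of a double bubble with boundary points $x_1 < \cdots < x_n$ is $\sum_i g(F(x_i))$, and the candidates reduce to: \textbf{Type I} (the target), with $R_1 = [-a, a]$, $F(a) = V_1/2$ and $R_2 = [-b, -a] \cup [a, b]$, $F(b) = (V_1+V_2)/2$, giving $P_I = 2g(V_1/2) + 2g((V_1+V_2)/2)$; \textbf{Type II}, symmetric with $R_2$ centered and $R_1$ flanking, giving $P_{II} = 2g(V_2/2) + 2g((V_1+V_2)/2)$; and \textbf{Type III}, a connected double interval, where minimizing over the single parameter $t$ corresponding to the interior boundary yields $P_{III} = \min_{t \in \R}[g(t - V_1) + g(t) + g(t + V_2)]$.

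The asymptotic comparison as $V_2 \to \infty$ proceeds as follows. Since $g(v) \to \infty$ as $|v| \to \infty$, one has $P_{II} - P_I = 2g(V_2/2) - 2g(V_1/2) \to \infty$ immediately. For Type III, the first-variation $g'(t - V_1) + g'(t) + g'(t+V_2) = 0$ forces the optimizer $t^* \to -\infty$: if $t^*$ stayed bounded in $V_2$, the unbounded $g'(t^*+V_2)$ would have no counterbalance from the bounded $g'(t^*-V_1) + g'(t^*)$. Hence $g(t^*) \to \infty$. The 1D single-bubble bound (by convexity and evenness, $g(u) + g(u+V) \geq 2g(V/2)$ for any $u \in \R$ and $V > 0$), applied to the outer boundaries of Type III with $V = V_1+V_2$, gives
\[
P_{III} \geq 2g((V_1+V_2)/2) + g(t^*).
\]
For $V_2$ large enough that $g(t^*) > 2g(V_1/2)$, this yields $P_{III} > P_I$, so Type I is the unique minimizer.

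The main obstacle will be rigorously showing $t^* \to -\infty$ and quantifying its growth, for which the unboundedness of $(\log f)'$ is essential. If $(\log f)'$ were bounded---for instance $f = \cosh$, where $(\log f)' = \tanh$---then $g'$ would be bounded and $g$ would grow only linearly, so the optimal Type III could keep $t^*$ bounded and $P_{III}$ could remain below $P_I$ by a constant; thus the hypothesis is sharp, and the analysis of $t^*$ is where it enters.
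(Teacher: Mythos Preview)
Your proposal is correct and follows essentially the same approach as the paper's own proof: both pass to volume coordinates, bound the outer two boundary contributions of the double interval below by $2g((V_1+V_2)/2)$ via convexity and evenness of $g$, and then reduce to showing that the middle boundary coordinate (your $t^*$, the paper's $\widetilde V + V_1$) tends to $-\infty$ from the equilibrium equation and the unboundedness of $g'$. Your inclusion of Type~II is harmless but redundant, since the paper's structural Proposition~4.6 already forces $V_1$ into the center of any three-interval minimizer.
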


\begin{figure}[!h]
  \includegraphics[width=0.49
\textwidth]{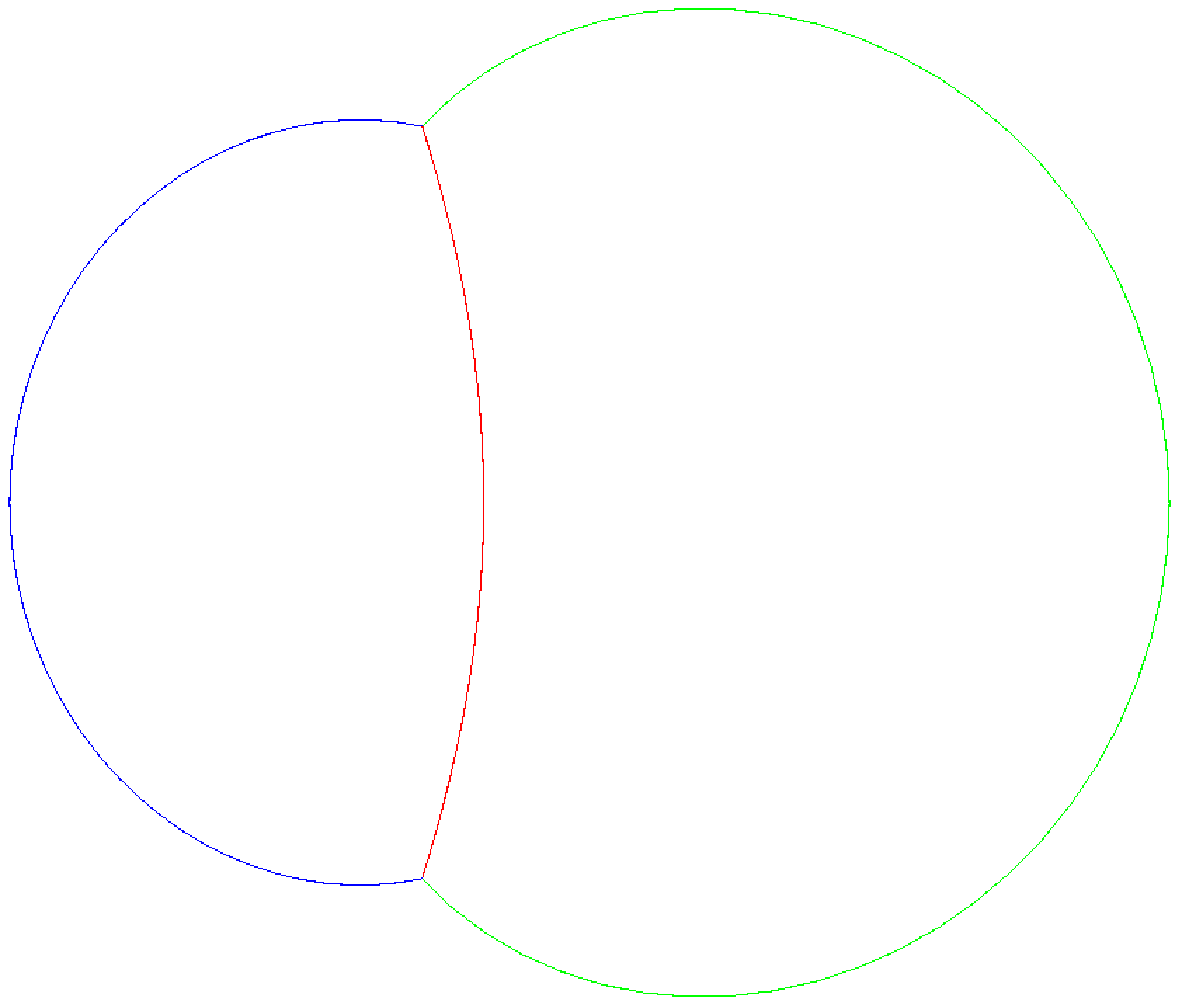}
  \includegraphics[width=0.44
\textwidth]{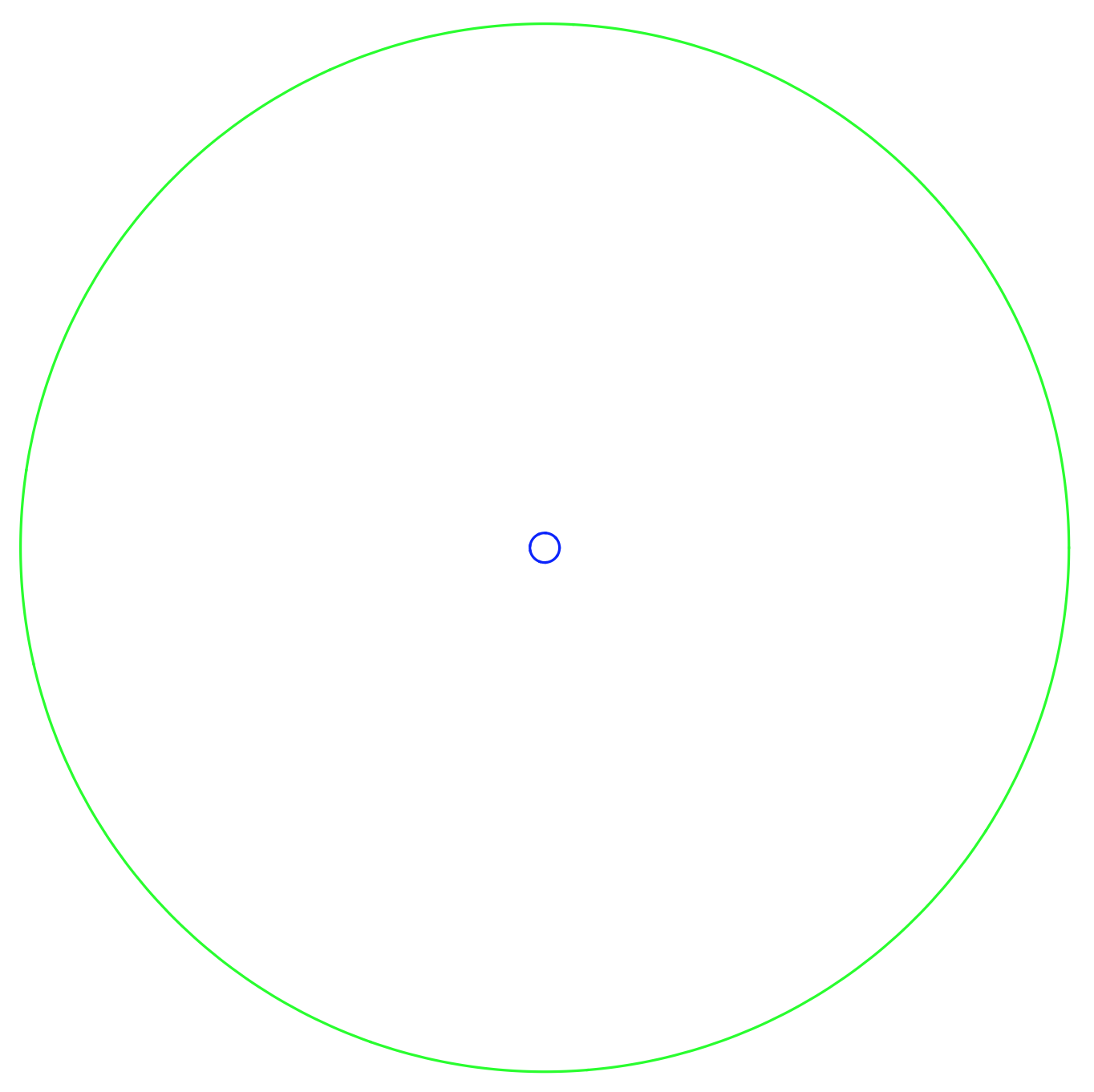}
  \includegraphics[width=0.49
\textwidth]{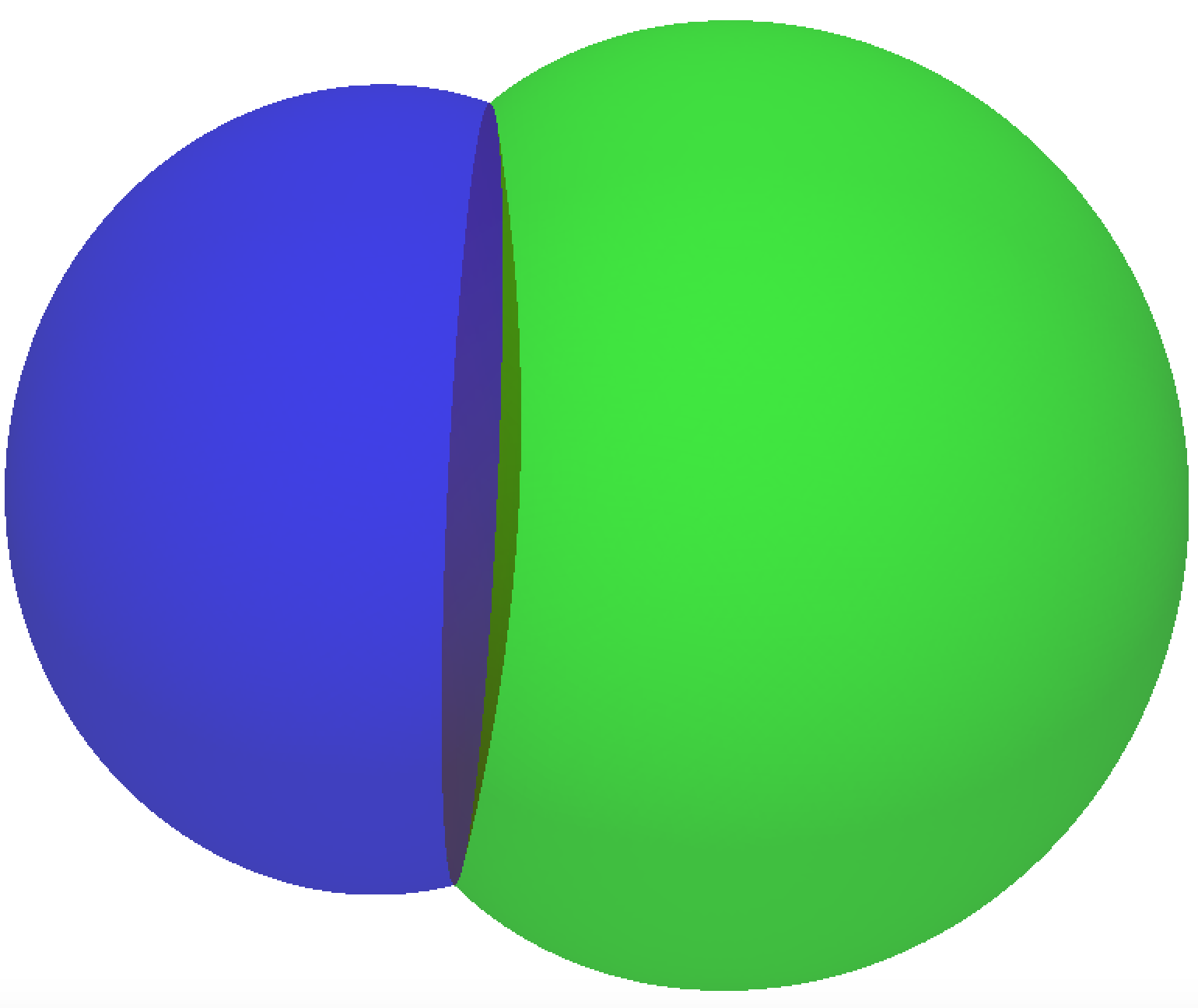}
  \includegraphics[width=0.43
\textwidth]{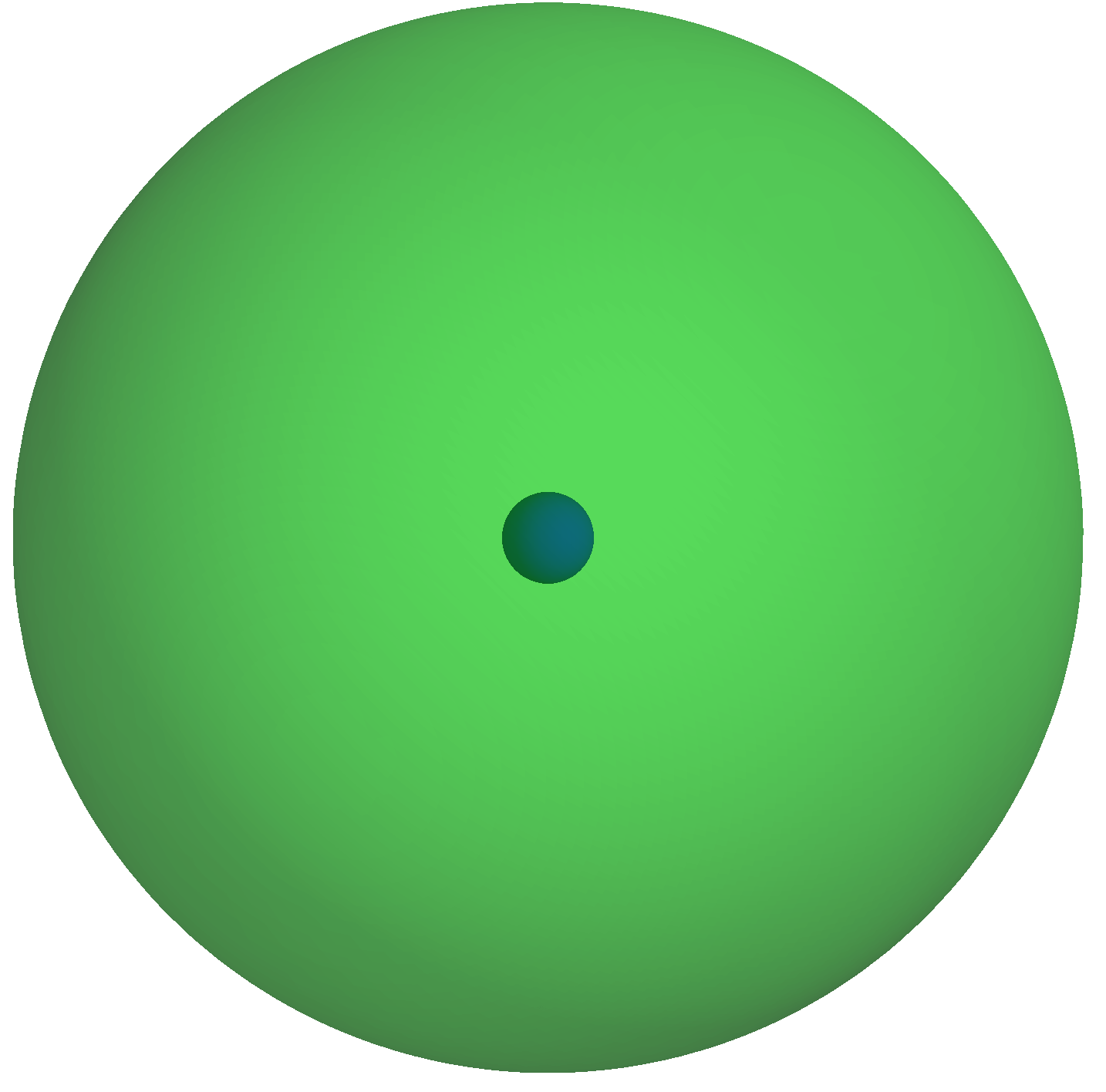}
\caption{We conjecture that a perimeter-minimizing double bubble in $\R^N$ with density $e^{r^2}$ is sometimes a standard double bubble and sometimes a much smaller bubble inside a bubble. Computed with Brakke's Surface Evolver \cite{Br} and Mathematica.}
\label{fig:brakketwothree}
\end{figure}

Our main result characterizes when the optimal double bubble transitions from a double interval to a triple interval, for a strictly log-convex density such that the derivative of the log of the density is unbounded:

\begin{theorem}[Thm. \ref{thm:db-1dmain}]
On $\R$ with a symmetric, strictly log-convex, $C^1$ density $f$ such that $(\log f)'$ is unbounded,
for given $V_1>0$, there is a unique $V_2=\lambda(V_1)$
such that the double interval in equilibrium and the triple interval
tie.
For $V_2>\lambda(V_1)$, the perimeter-minimizing
double bubble is uniquely the triple interval.
For $V_2<\lambda(V_1)$, the perimeter-minimizing
double bubble is uniquely the double interval
in equilibrium.
Moreover, $\lambda$ is a strictly increasing $C^1$ function
that tends to a positive limit as $V_1\to 0$.
\end{theorem}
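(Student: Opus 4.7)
My plan is to present $\lambda(V_1)$ as the unique root of $\Delta(V_1,V_2) := P_T(V_1,V_2) - P_D(V_1,V_2)$, where $P_D$ and $P_T$ are the perimeters of the equilibrium double interval and the triple interval (with $V_1$ in the middle), and to read off $C^1$-regularity, monotonicity, and the $V_1\to 0^+$ limit from the implicit function theorem. Throughout I work with $V_2 \geq V_1$, using the symmetry of the problem under swapping volumes.

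Setting up both families: the triple interval is manifestly $C^1$ in $(V_1,V_2)$ via its radii $0<a<b$ with $V_1 = 2\int_0^a f$ and $V_2 = 2\int_a^b f$. For the equilibrium double interval $x_1<x_2<x_3$, the two volume constraints together with the equilibrium condition $(\log f)'(x_1) + (\log f)'(x_2) + (\log f)'(x_3) = 0$ form a $3\times 3$ system whose Jacobian determinant equals $f_1 f_2 \kappa_3 + f_1 f_3 \kappa_2 + f_2 f_3 \kappa_1$, with $f_i := f(x_i)$ and $\kappa_i := (\log f)''(x_i) > 0$ by strict log-convexity. Positivity gives $C^1$ and unique dependence on $(V_1,V_2)$; the equilibrium also forces $x_1 < 0 < x_3$. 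A short calculation using the equilibrium condition to collapse three terms into one then yields the pressure formulae
\[
\frac{\partial P_D}{\partial V_2}\bigg|_{V_1} = (\log f)'(x_3),\qquad \frac{\partial P_T}{\partial V_2}\bigg|_{V_1} = (\log f)'(b),
\]
so $\partial_{V_2}\Delta$ has the sign of $b-x_3$.

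The crucial monotonicity step is to show that $b < x_3$ throughout $\{V_2 > V_1\}$. The identities $\int_0^b f = (V_1+V_2)/2$ and, by symmetry of $f$, $\int_0^{x_3} f + \int_0^{|x_1|} f = V_1+V_2$ reduce the comparison of $b$ and $x_3$ to that of $|x_1|$ and $x_3$. Now $\{|x_1| = x_3\}$ coincides exactly with the diagonal $\{V_1 = V_2\}$: the equilibrium condition together with oddness of $(\log f)'$ forces $x_2 = 0$ under $|x_1|=x_3$, giving $V_1 = V_2$; conversely on $V_1 = V_2$ the equilibrium is symmetric. Since $\{V_2 > V_1\}$ is connected, the sign of $x_3 - |x_1|$ is constant there, and a local check near the diagonal (using the equilibrium relation to show $dx_1/dx_3 \in (-1, 0)$ at $V_1 = V_2$) pins it down to be positive.

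Combining Prop.~\ref{lem:db-equalvol} (giving $\Delta(V_1,V_1) > 0$) with Prop.~\ref{lem:db-v1fixedv2large} ($\Delta(V_1,V_2) < 0$ for $V_2$ large) and strict monotonicity $\partial_{V_2}\Delta < 0$ on $\{V_2 > V_1\}$, the intermediate value theorem yields the unique $V_2 = \lambda(V_1) > V_1$ with $\Delta = 0$. The implicit function theorem applied at this zero gives $\lambda \in C^1$, and an analogous pressure computation produces $\partial_{V_1}P_D = -(\log f)'(x_1)$ and $\partial_{V_1}P_T = (\log f)'(a) + (\log f)'(b)$, so $\partial_{V_1}\Delta = (\log f)'(a) + (\log f)'(b) - (\log f)'(|x_1|) > 0$ (using $|x_1| < b$ and oddness of $(\log f)'$), whence $\lambda' > 0$. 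As $V_1 \to 0^+$, both families admit continuous limits---the triple's middle interval shrinks to the origin and the double degenerates via $x_1 \to x_2$ with limiting equilibrium $2(\log f)'(x_2) + (\log f)'(x_3) = 0$---whose tie equation has a unique positive solution $V_2^\ast$, giving $\lambda(V_1) \to V_2^\ast > 0$ by continuity. The principal obstacle is the monotonicity lemma, as it globalizes the local sign of $x_3 - |x_1|$ through the geometry of the equilibrium.
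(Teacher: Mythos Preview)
Your overall architecture matches the paper's: define the perimeter difference $\mu=P_3-P_2$ (your $\Delta$), prove it is strictly decreasing in $V_2$ and strictly increasing in $V_1$, then combine Propositions~\ref{lem:db-equalvol} and~\ref{lem:db-v1fixedv2large} with the implicit function theorem. The differences are in execution, and they are worth noting.

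First, the paper works in the \emph{volume coordinate} (Lemma~\ref{lem:volcoord}), which turns your ``crucial monotonicity step'' into a one-line check. In volume coordinates the equilibrium reads $f'(\widetilde V)+f'(\widetilde V+V_1)+f'(\widetilde V+V_1+V_2)=0$ with $f'$ odd and strictly increasing; substituting $\widetilde V=-(V_1+V_2)/2$ makes the outer two terms cancel and leaves $f'(-(V_2-V_1)/2)<0$, so $\widetilde V>-(V_1+V_2)/2$ (Lemma~\ref{lem:db-bdvdouble}). This is exactly your inequality $|x_1|<b$ (equivalently $x_3>b$), obtained without any connectedness argument or local derivative check near the diagonal. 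Your route through $\{|x_1|=x_3\}=\{V_1=V_2\}$ plus a sign determination is correct but considerably longer.

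Second, your Jacobian computation invokes $\kappa_i=(\log f)''(x_i)$, which requires $f$ to be $C^2$; the theorem only assumes $C^1$. The paper avoids this: uniqueness of the equilibrium double interval comes from strict monotonicity of $(\log f)'$ alone (Lemma~\ref{lem:db-2double}), and the pressure formulae follow because the equilibrium condition annihilates the $d\widetilde V$ contribution to $dP_2$ (an envelope-type argument), so one never needs $(x_1,x_2,x_3)$ to be $C^1$ in $(V_1,V_2)$, only continuous. Your argument is easily repaired along these lines, but as written it oversteps the hypotheses.

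Third, for the positive limit as $V_1\to 0$, the paper simply observes (Lemma~\ref{lem:db-v2small}) that for all small $V_2$ one has $P_2\approx 3f(0)<4f(0)\approx P_3$, giving a uniform lower bound $\lambda(V_1)\ge v>0$; monotonicity then yields the limit. Your limiting-equation approach also works and even identifies the limit, but you would still need to verify monotonicity and the large-$V_2$ sign for the degenerate $V_1=0$ problem, which you leave implicit.
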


Section 6 studies the growth rate of the tie curve $\lambda(V_1)$. Our results imply for example that for Borell density $e^{x^2}$, for $V_1$ large, 
$$V_1(\log V_1)^{1/2-\eps} < \lambda(V_1) < V_1^{4+\eps}$$
(Cors. \ref{cor:lowerboundBorelllambda}, \ref{cor:upperboundBorelllambda}).

Our numerics indicated to our astonishment that sometimes as the volumes are scaled up, the minimizer changes from a double interval to a triple interval and then back to a double interval, as in Figure \ref{fig:tieintersection}.

\begin{figure}[!ht]
\includegraphics[width=0.8\textwidth]{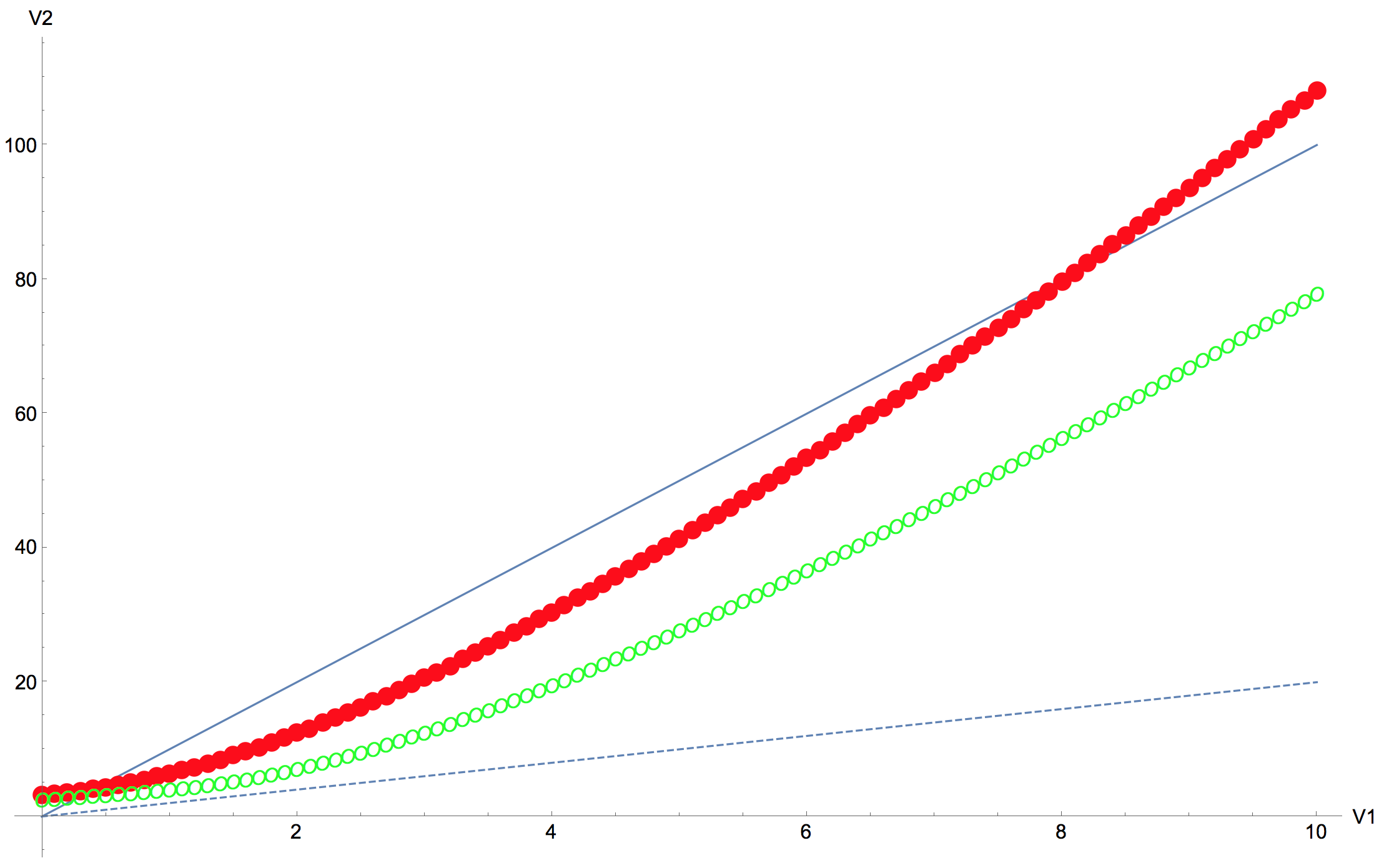}
\caption{This numerically generated plot shows the tie points of the double and triple interval in the curve with solid red points for density $e^{x^2}$ and the curve with the empty green points for density $e^{x^4}$. The solid blue line is $V_2=10V_1$ and the dashed blue line is $V_2=2V_1$. Note that the solid blue line intersects the red curve twice.} 
\label{fig:tieintersection}
\end{figure}

In $\R^N$, we conjecture and provide some numerical evidence that the solution is a standard double bubble (the analog of the double interval) when the sizes are comparable and a bubble inside a bubble (the analog of the triple interval) when one bubble is much larger, as in Figure \ref{fig:brakketwothree}. For equal volumes in 2D, as the volumes increase, the solution tends to a circle centered at the origin
plus diameter (Fig. \ref{fig:g16conj}).

\begin{figure}[h!]
  \includegraphics[width=0.5
\textwidth]{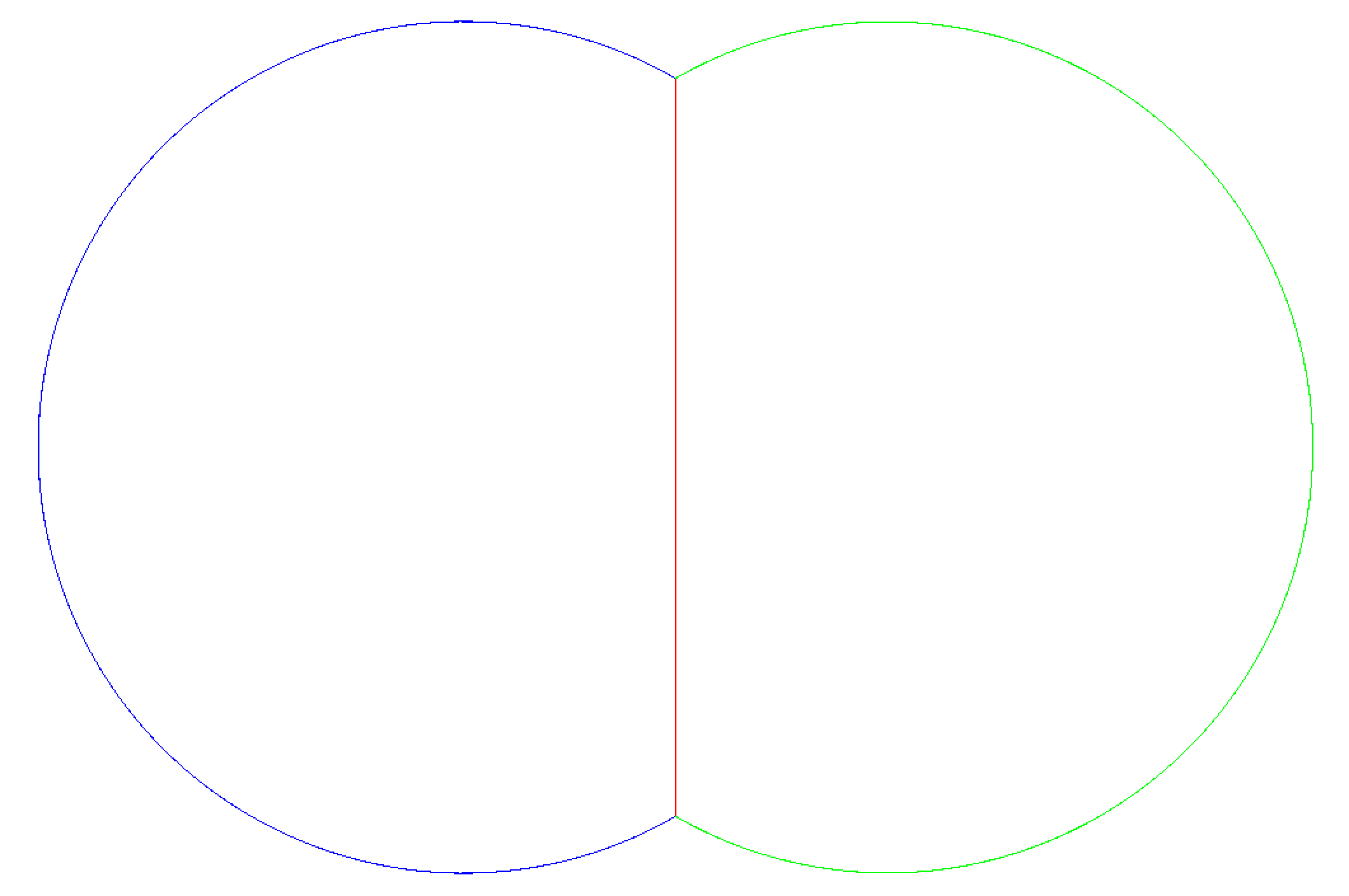}
  \includegraphics[width=0.45
\textwidth]{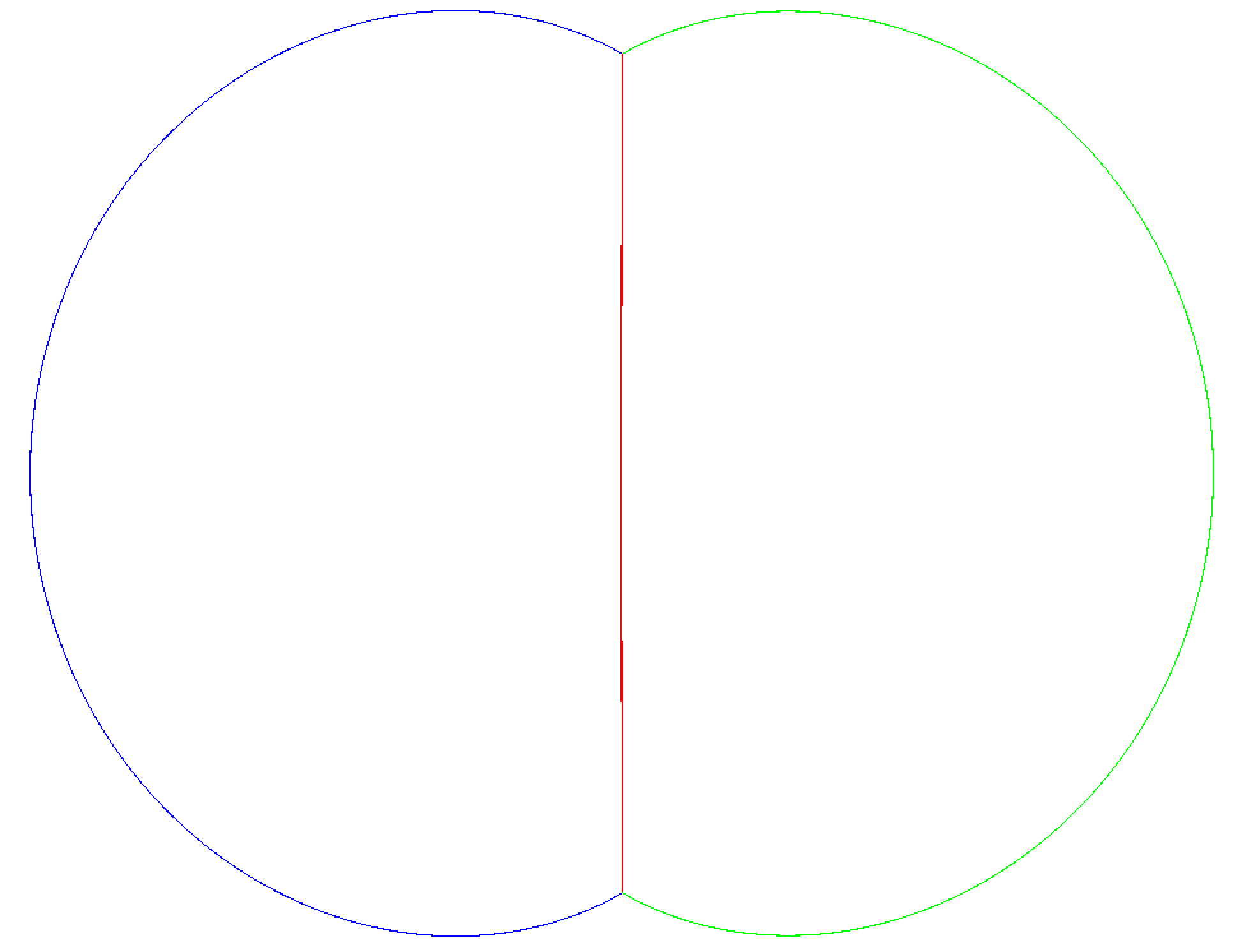}
  \includegraphics[width=0.47
\textwidth]{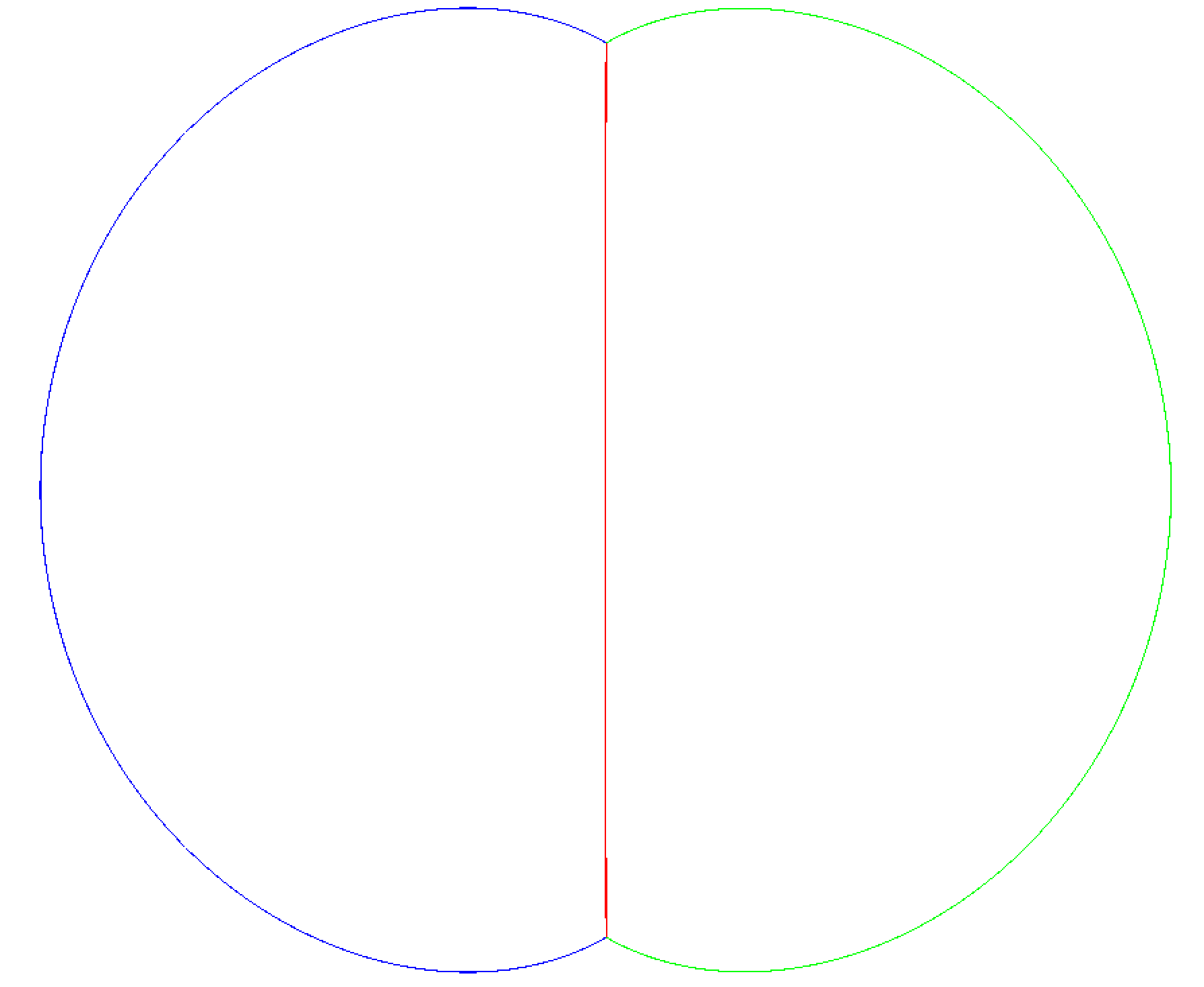}
\includegraphics[width=0.45
\textwidth]{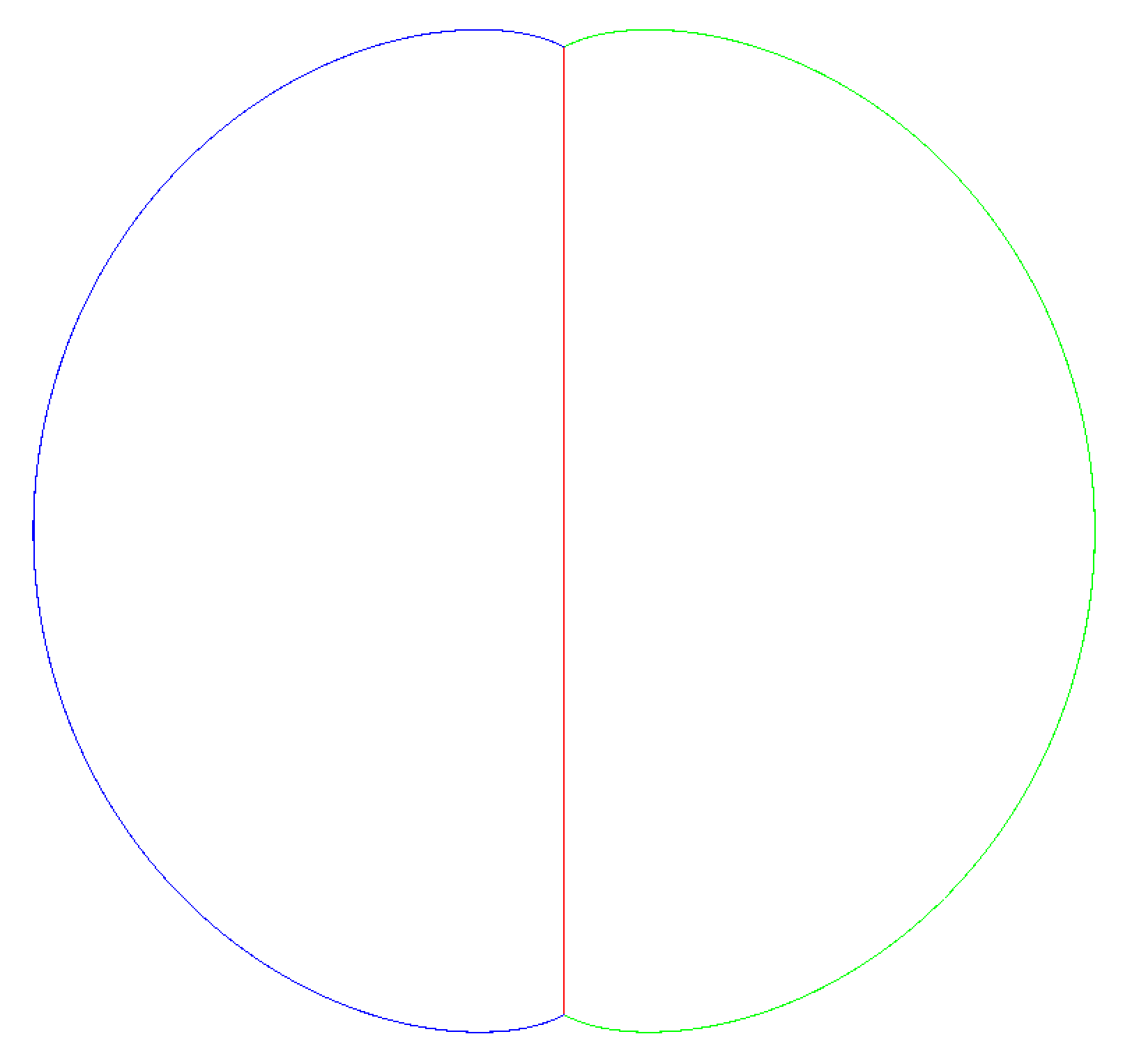}
\caption{For increasing equal volumes (0.01, 0.1, 10 and 1000) the double bubble approaches a circle plus diameter. Computed with Brakke's Surface Evolver.}
\label{fig:g16conj}
\end{figure}

We conjecture that the smoothness assumption of Theorem \ref{thm:db-1dmain} can be omitted. By smoothing, any symmetric strictly log-convex density on $\R$ is a limit of smooth densities. It follows that Proposition \ref{lem:db-equalvol} holds for any symmetric strictly log-convex density. But this argument does not work in general: in Proposition \ref{lem:db-v1fixedv2large}, the threshold for the ``sufficiently large $V_2$'' condition could go to infinity in the limit. Nevertheless, we think that one may be able to obtain the same results by directly working with non-smooth densities via one-sided derivatives.

% old version
%The triple bubble problem on the real line can probably be studied with techniques similar to those used in this paper. However, it is likely to be much more complicated. By Proposition \ref{prop:2n-1}, a triple bubble can have up to 5 components. We suspect that this bound is sharp, so there are potentially many combinatorial types of minimizers. The transition boundary will not be just a graph of a single function in $\R^2$ (Thm. \ref{thm:db-1dmain}), but likely many surfaces in $\R^3$ stitched together, making it more difficult to study. Moreover, in the double bubble problem, it happens that there is a single kind of transition (from double to triple intervals) that occurs for every density. We do not know if we would still have the same kinds of transitions for every density in the triple bubble case.

The triple bubble problem on the real line can be studied with techniques similar to those used in this paper. In fact, we have made some progress, showing that for a symmetric, strictly log-convex density, there are four possible combinatorial types of perimeter-minimizing triple bubbles. Our results on this problem can be found in our report \cite{So1}. However, the triple bubble problem is much more complicated than the double one. The transition boundary is likely many surfaces in $\R^3$ stitched together, making it more difficult to study. Moreover, in the double bubble problem, it happens that there is a single kind of transition (from double to triple intervals) that occurs for every density. We suspect that, in the triple bubble case, there may be different kinds of transitions depending on the density. In particular, we conjecture that only three types of perimeter minimizers occur for some densities and four for others.

The single bubble problem with density was previously studied by Bobkov and Houdr\'{e} \cite{BH} and Bayle \cite{Ba}.
There are results in the literature on double bubbles in the sphere $\mb{S}^N$, hyperbolic space $\mb{H}^N$, flat tori $\mb{T}^2$ and $\mb{T}^3$, and Gauss space (Euclidean space with density $e^{-r^2}$); see \cite[Chapt. 19]{M}. Recently, Milman and Neeman \cite{MN} proved the Gaussian double bubble conjecture, which states that the solution is three halfspaces meeting at 120 degrees.

\subsection*{Outline of proofs}
First we show that a perimeter-minimizing double bubble
consists of two or three contiguous intervals,
by sliding and rearrangement arguments (Prop. \ref{prop:db-2or3}).
Moreover, for fixed $V_1$, as $V_2$ increases from $V_1$, it transitions from double
to triple (Thm. \ref{thm:db-1dmain}).
Our most difficult analysis describes how the transition point $\lambda(V_1)$ increases as $V_1$ increases (Props. \ref{prop:lambdatoinfty} and \ref{prop:lambdaupbd}).

\subsection*{Outline of paper}
Section \ref{sect:notation} defines bubbles and densities.
Section \ref{sect:nbub1d} provides our results on $n$-bubbles on the real line.
Section \ref{sect:db1d} provides our main results on the double bubbles on the real line with strictly log-convex densities.
Section \ref{sect:nonstrict} examines some non-strict log-convex densities
on the real line.
Section \ref{sect:bounds} gives lower and upper bounds on the tie curve
given in Theorem \ref{thm:db-1dmain}.
Section \ref{sect:higher} uses numerical techniques to compute the surface areas of conjectured double bubbles in $\R^2$ and $\R^3$ with Borell density $e^{r^2}$.

\subsection*{Acknowledgements} This paper is a product of the work of the Williams College SMALL NSF REU 2016 and 2017 Geometry Groups. We thank our adviser Frank Morgan for his guidance on this project. We would also like to thank the National Science Foundation; Williams College (including the John and Louise Finnerty Fund); the Mathematical Association of America; Stony Brook University; Michigan State University; University of Maryland, College Park; and Massachusetts Institute of Technology.

\section{Densities and Bubbles}
\label{sect:notation}
\begin{definition}
A \emph{density} on $\mathbb{R}^N$ is just a positive function, used to weight volume and perimeter. A \emph{bubble} in $\mathbb{R}^N$ is a region of prescribed (weighted) volume and perhaps many components. An \emph{$n$-bubble} consists of $n$ bubbles with disjoint interiors, which may or may not share boundaries. A 2-bubble is also called a \emph{double bubble}.
Each shared boundary is counted only once in the perimeter.
An $n$-bubble that minimizes perimeter
for its enclosed volumes is called \emph{perimeter minimizing} or \emph{isoperimetric}.
\end{definition}

\section{$n$-Bubbles on the Real Line}
\label{sect:nbub1d}
We consider $\R$ with density $f$. If $f$ is bounded below and an $n$-bubble has finite weighted perimeter, then each region consists of finitely many intervals.
The boundary points divide $\R$ into closed intervals
(which may be infinite on one side) called \emph{blocks}.
A block may be a component of a bubble, or its interior may not intersect any bubble.

This section contains results on existence (Prop. \ref{prop:1d-exist}),
equilibrium (Cor. \ref{prop:1d-equil}),
and regularity (Prop. \ref{prop:1d-contiguous})
for $n$-bubbles on the real line with density.
Proposition \ref{prop:1d-single} identifies the optimal single bubble as a symmetric interval. Proposition \ref{prop:2n-1} proves that a perimeter-minimizing $n$-bubble has at most $2n-1$ components.

\begin{prop}
\label{prop:1d-exist}
On $\R$ with continuous density $f$ going to infinity in both directions, given $n$ finite volumes $V_i >0$, a perimeter-minimizing $n$-bubble exists and consists of finitely many intervals.
\end{prop}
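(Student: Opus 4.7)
The plan is to apply the direct method of the calculus of variations. First I would exhibit a specific competitor---say, $n$ disjoint bounded intervals, each carrying the prescribed weighted volume $V_i$---to obtain a finite upper bound $P_0$ on the infimum perimeter. Since $f$ is continuous and tends to infinity in both directions, I can choose $R>0$ so large that $f(x)>P_0+1$ whenever $\abs{x}>R$. Then for any candidate $n$-bubble with perimeter at most $P_0+1$, every boundary point $x$ must satisfy $f(x)\le P_0+1$ and hence lies in $[-R,R]$. Moreover, since every $V_i$ is finite and $f\to\infty$, no component can be unbounded, so every component is a bounded interval with endpoints in $[-R,R]$. Setting $m=\min_{[-R,R]}f>0$ (positive by continuity), each boundary point contributes at least $m$ to the weighted perimeter, so the number of boundary points---and hence the number of intervals---is uniformly bounded by some $K<\infty$.

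Next I would pick a minimizing sequence $\set{B_k}$ and, discarding early terms, assume each $B_k$ has perimeter at most $P_0+1$. By passing to a subsequence I may assume all $B_k$ share the same combinatorial type: for each $i$, bubble $i$ in $B_k$ has the same number of components, arranged in the same order along the line and with the same relative ordering against components of the other bubbles. Each $B_k$ is then described by a tuple of at most $2K$ endpoints in the compact set $[-R,R]$, so a further subsequence yields convergence of all endpoints to a limit configuration $B_\infty$. Because $f$ is continuous on $[-R,R]$, the weighted volume of each component is continuous in its endpoints, so bubble $i$ of $B_\infty$ has volume exactly $V_i$; in particular, not all components of bubble $i$ can degenerate, and $B_\infty$ is a bona fide $n$-bubble with the prescribed volumes. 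Two components from different bubbles in $B_k$ cannot overlap, so their limits can at most touch, ensuring the bubbles of $B_\infty$ have disjoint interiors.

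To finish, I would establish lower semicontinuity of the weighted perimeter. The boundary points of $B_\infty$ form a subset of the limits of the boundary points of the $B_k$: whenever two endpoints of the same bubble merge to a point $c$, that bubble's limit covers a neighborhood of $c$ as interior, so $c$ drops out of the boundary; in every other case the merged point is inherited as a boundary of $B_\infty$ and contributes at most its $f$-value by continuity. Hence the perimeter of $B_\infty$ is at most $\liminf_k$ of the perimeters of $B_k$, which equals the infimum, so $B_\infty$ is a minimizer. The uniform bound on the number of intervals shows in particular that the minimizer consists of finitely many intervals.

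The main obstacle I anticipate is handling the degeneration of components cleanly: ensuring that no bubble's volume is lost in the limit (so $B_\infty$ really attains each $V_i$) and that coincident limit points can only decrease the total perimeter. Both are ultimately controlled by continuity of $f$ together with a case analysis of what each limit point is---interior to one bubble, shared boundary between two different bubbles, or boundary with the complement---but the bookkeeping requires care regarding the combinatorial types encountered along the minimizing sequence.
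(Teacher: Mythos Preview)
Your proposal is correct and follows essentially the same compactness argument as the paper: bound the number of intervals via the positive lower bound on $f$, confine candidates to a bounded region because $f\to\infty$, extract a convergent subsequence of endpoints, and use continuity of $f$ to pass volumes and perimeter to the limit. The paper's proof is much terser---it simply asserts that the limit is an $n$-bubble with the right volumes and infimal perimeter---whereas you have spelled out the combinatorial-type reduction and the lower-semicontinuity analysis of merging endpoints that the paper leaves implicit.
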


\begin{proof}
Since $f$ has a positive lower bound, candidates consist of a bounded number of intervals. Since $f$ goes to infinity in both directions, candidates lie in a bounded region.
By compactness, there is a sequence of candidates whose perimeters tend to the infimum and whose endpoints converge. Because $f$ is continuous, the limit of these candidates is an $n$-bubble enclosing the desired volumes and the perimeter is the infimum.
So a perimeter-minimizing $n$-bubble exists (and consists of finitely many intervals).
\end{proof}

\begin{prop}\emph{(First Variation Formula).}
\label{prop:FirstVariation}
Let $f$ be a $C^1$ density on $\R$. Then the first derivative of perimeter moving a point x to the right at rate $1/f$ to alter volume at unit speed is given by
$$\frac{dP}{dt}=(\log f)'(x).$$
\end{prop}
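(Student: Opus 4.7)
The plan is to compute $dP/dt$ from first principles via the chain rule. First I would observe that, since perimeter on $\R$ with density $f$ is weighted by $f$ at each boundary point, the only contribution to $dP/dt$ comes from the single moving point $x(t)$ and equals $f(x(t))$; all the other boundary points of the bubble are held fixed and contribute constants that drop out.

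Next I would check that the prescribed rate $\dot x = 1/f(x)$ really does produce unit-speed volume change, which is what makes the stated formula the ``derivative of perimeter with respect to volume.'' Since the weighted volume bounded between a fixed interior reference point and $x$ equals $\int^{x} f(y)\,dy$, the fundamental theorem of calculus gives $dV/dx = f(x)$, so
$$\frac{dV}{dt} = f(x)\,\dot x = f(x)\cdot\frac{1}{f(x)} = 1,$$
as required (with the opposite sign if $x$ is a left endpoint rather than a right one, which is a convention for what ``to the right'' means).

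The computation itself is then a one-line application of the chain rule:
$$\frac{dP}{dt} = \frac{d}{dt} f(x(t)) = f'(x)\,\dot x = \frac{f'(x)}{f(x)} = (\log f)'(x).$$
The $C^1$ hypothesis on $f$ exists precisely so that $f'$ and hence $(\log f)'$ are well defined, and the motion rate $1/f$ was engineered so that the $f$ appearing from differentiating the perimeter contribution cancels exactly against the $f$ appearing from the volume derivative. There is no real obstacle; the only thing to keep track of is the sign, which depends on whether the moving boundary point is a left or right endpoint and is simply absorbed into the interpretation of ``altering volume at unit speed.''
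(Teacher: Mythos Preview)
Your proof is correct and follows essentially the same approach as the paper: both compute $dP/dt$ via the chain rule as $\dfrac{dP}{dx}\cdot\dfrac{dx}{dt} = f'(x)\cdot\dfrac{1}{f(x)} = (\log f)'(x)$. Your version is more detailed in that you also verify $dV/dt = 1$ and discuss the sign convention, but the underlying argument is identical.
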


\begin{proof}
$$\frac{dP}{dt} = \frac{dP}{dx} \frac{dx}{dt} = f'\frac{1}{f} = (\log f)'.$$
\end{proof}

\begin{cor}
\label{prop:1d-equil}
Let $f$ be a $C^1$ density on $\R$.
If an $n$-bubble with boundary points $x_1<x_2<\dots<x_k$ is perimeter minimizing, then
$$\sum_{i=1}^k (\log f)'(x_i)=0.$$
More generally, if $1 \leq a < b \leq k$ are such that
the blocks to the left of $x_a$ and to the right of $x_b$ both belong to the same bubble or to no bubble, then
$$\sum_{i=a}^b (\log f)'(x_i)=0.$$
\end{cor}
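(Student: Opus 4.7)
The plan is to construct an explicit one-parameter variation of the $n$-bubble that preserves all prescribed volumes to first order, and then invoke minimality together with the First Variation Formula (Prop. \ref{prop:FirstVariation}) to force the perimeter derivative to vanish.

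For the general statement, I would simultaneously move each boundary point $x_i$ with $a \le i \le b$ rightward at rate $1/f(x_i)$, holding all other boundary points fixed. By Prop. \ref{prop:FirstVariation}, each such point contributes $(\log f)'(x_i)$ to $dP/dt$, so the total derivative of perimeter along this variation is exactly $\sum_{i=a}^{b}(\log f)'(x_i)$.

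The main bookkeeping step is to verify that this variation is admissible. Writing $R_i$ for the block with endpoints $x_i$ and $x_{i+1}$, an interior block with both endpoints moving at the prescribed rates has net (weighted) volume change $+1-1=0$, since the left endpoint consumes volume at rate $f(x_i)\cdot 1/f(x_i)=1$ and the right endpoint adds volume at rate $f(x_{i+1})\cdot 1/f(x_{i+1})=1$. The only exceptions are the two boundary blocks $R_{a-1}$ (whose right endpoint $x_a$ moves in, contributing $+1$) and $R_b$ (whose left endpoint $x_b$ moves out, contributing $-1$). Under the hypothesis that $R_{a-1}$ and $R_b$ either both lie in the same bubble or both lie in no bubble, these $\pm 1$ contributions cancel within each bubble's total volume. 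Thus every prescribed volume is preserved to first order, and minimality forces the perimeter derivative to vanish, giving the displayed identity.

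The first equation is the special case $a=1$, $b=k$: the unbounded blocks $(-\infty,x_1]$ and $[x_k,\infty)$ cannot belong to any bubble, since each prescribed volume is finite, so the hypothesis is automatic. The only real subtlety is to observe that the $C^1$ assumption on $f$ together with the strict ordering $x_1<x_2<\dots<x_k$ guarantees that the perturbed bubble is genuinely defined and varies smoothly in $t$ near $t=0$, so the first variation formula legitimately applies and yields a two-sided derivative that must vanish at the minimum.
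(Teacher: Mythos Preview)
Your argument is correct and follows the same approach as the paper: slide the boundary points $x_a,\dots,x_b$ simultaneously at rate $1/f$, observe that volumes are preserved, and apply the First Variation Formula (Prop.~\ref{prop:FirstVariation}) together with minimality to conclude that the perimeter derivative vanishes. The paper's proof is a one-sentence version of exactly this; you have supplied the block-by-block volume bookkeeping that the paper leaves implicit. One small remark: the variation actually preserves volumes \emph{exactly} (not merely to first order) if each $x_i$ moves at rate $1/f(x_i(t))$, which is what makes the competitors genuinely admissible and the two-sided derivative argument clean.
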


\begin{proof}
Since moving the points at rate $1/f$ preserves volumes and the $n$-bubble minimizes perimeter for fixed volumes, the derivative $dP/dt$ must vanish. Now the result follows from the First Variation Formula (Prop. \ref{prop:FirstVariation}).
\end{proof}

\begin{remark}
\label{rem:onesidedderiv}
In Corollary \ref{prop:1d-equil}, if the condition on $f$ is relaxed from $C^1$ to one-sided derivatives (for example if $f$ is convex or log-convex), then similarly the sum of the right derivatives is nonnegative and the sum of the left derivatives nonpositive.  
\end{remark}

\begin{prop}
\label{prop:1d-contiguous}
On $\R$ with a continuous density
that is nonincreasing on $(-\infty,0]$ and 
nondecreasing on $[0,\infty)$,
a perimeter-minimizing $n$-bubble
consists of finitely many \emph{contiguous} intervals. 
\end{prop}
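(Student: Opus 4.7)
The plan is to argue by contradiction: assume a perimeter-minimizing $n$-bubble contains a gap, meaning a block $[a,b]$ whose interior is disjoint from every bubble, and construct a valid competitor with strictly smaller perimeter. Writing $F(x) = \int_0^x f(t)\,dt$, the key tool is an iso-volumetric sliding: a map $\phi$ defined on an outer portion of the configuration by $F(\phi(x)) = F(x) - \kappa$ for an appropriate constant $\kappa$. Such a $\phi$ automatically preserves each bubble component's volume, since $\int_{\phi(p)}^{\phi(q)} f = F(\phi(q))-F(\phi(p)) = F(q)-F(p)$, and it shifts each point toward the origin, so by the monotonicity hypothesis each perimeter contribution $f(x_i)$ at a boundary point does not increase.

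If $a \geq 0$, I would take $\kappa = \int_a^b f$ and apply $\phi$ on $[b,\infty)$ with $\phi(b)=a$, leaving $(-\infty,a]$ fixed. The gap closes, every bubble's total volume is preserved, and each boundary point $x_i \geq b$ contributes no more perimeter than before. Crucially, the old boundaries at $a$ and $b$ merge into a single point (or cancel entirely if the same bubble borders both sides of the gap), for a strict saving of at least $f(b) > 0$. The case $b \leq 0$ is symmetric, sliding $(-\infty,a]$ rightward.

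When the gap straddles $0$ (i.e.\ $a < 0 < b$), I would compress each side separately toward the origin using $\phi_+\colon [b,\infty)\to[0,\infty)$ defined by $F(\phi_+(x)) = F(x)-F(b)$ and a symmetric $\phi_-$ on $(-\infty,a]$, so that $\phi_+(b) = \phi_-(a) = 0$. The old boundaries at $a$ and $b$ both land at $0$ and merge there, contributing at most $f(0)$. Since monotonicity gives $f(a) \geq f(0)$ and $f(b) \geq f(0)$, the net change at these three points is at most $f(0)-f(a)-f(b) \leq -f(b) < 0$, and the other shifted boundaries contribute no more than before.

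The main obstacle is this straddling case: the direction of monotonicity reverses at the origin, so no single compression can close such a gap. Handling the two halves independently and checking that they meet correctly at $0$---where $f$ attains its minimum---without inflating the perimeter is what makes the argument go through.
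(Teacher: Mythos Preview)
Your argument is correct and follows essentially the same sliding strategy as the paper: close a gap by translating in volume coordinates toward the origin, so that monotonicity of $f$ keeps each boundary contribution from increasing while the merged endpoints yield a strict saving. The only differences are cosmetic---the paper slides a single adjacent component rather than the entire half-line, and handles the straddling case by two sequential moves (left component to the origin, then right component leftward) rather than one simultaneous compression---but the mechanism is identical.
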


\begin{proof}
Because the density is nonincreasing on $(-\infty,0]$ and 
nondecreasing on $[0,\infty)$, it has a positive lower bound. Hence a perimeter-minimizing $n$-bubble consists of
finitely many intervals, or it would have infinite perimeter.

Suppose that these intervals are not contiguous. Then there exist two components $[a,b]$ and $[c,d]$ with $a<b<c<d$, where $(b,c)$ does not intersect any bubble.
We may assume that $b<0$ by symmetry. But then $[a,b]$ can be moved to the right until it reaches $[c,d]$ or the origin so that the volume is preserved and the perimeter does not increase. If $[a,b]$  meets $[c,d]$, two boundary points become one and the total perimeter is less than the original configuration's, contradiction. If $[a,b]$ meets the origin, then $[c,d]$ can be moved to the left while maintaining the volume and reducing the perimeter as before, contradiction.
\end{proof}

For completeness we include a proof of the 1D log-convex density theorem \cite[Cor. 4.12]{RCBM}:

\begin{prop}[Single bubble]
\label{prop:1d-single}
On $\R$ with symmetric, strictly log-convex,
continuous density,
every interval symmetric about the origin is uniquely isoperimetric.
\end{prop}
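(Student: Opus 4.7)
The plan is to reduce to the one-parameter optimization over intervals of fixed weighted volume $V$ and apply the strict monotonicity of $(\log f)'$.

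First I would record the setup. Since $\log f$ is strictly convex, continuous and symmetric, it is non-constant, hence unbounded on $\R$, and symmetry forces $\log f\to\infty$ at both $\pm\infty$; consequently $f\to\infty$ at both ends. Proposition \ref{prop:1d-exist} then yields a perimeter-minimizing single bubble, and since $\log f$ is strictly decreasing on $(-\infty,0]$ and strictly increasing on $[0,\infty)$ (so the same holds for $f$), Proposition \ref{prop:1d-contiguous} reduces this minimizer to a single bounded interval $[a,b]$.

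Next, the main computation. For each admissible $a\in\R$ let $b=b(a)$ be the unique value with $\int_a^{b(a)} f = V$; since $f$ is continuous, implicit differentiation gives $b'(a)=f(a)/f(b)$. Writing $P(a) = f(a) + f(b(a))$, differentiating and dividing by $f(a)>0$ yields
\[
\frac{P'(a)}{f(a)} \;=\; (\log f)'(a) + (\log f)'(b(a)).
\]
Symmetry of $f$ makes $(\log f)'$ odd, and strict log-convexity makes it strictly increasing. Hence $(\log f)'(a)+(\log f)'(b)=0$ forces $(\log f)'(a)=(\log f)'(-b)$, so $a=-b$; the same monotonicity shows $P'<0$ for $a<-b(a)$ and $P'>0$ for $a>-b(a)$. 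Therefore $P$ has a unique minimum on the constraint curve, attained exactly at the symmetric interval, which proves both existence and uniqueness.

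The main obstacle I anticipate is the $C^0$ (not $C^1$) hypothesis on $f$: $(\log f)'$ need not exist pointwise. However $\log f$ is convex, so its left and right derivatives exist everywhere and are strictly increasing by strict convexity, and Remark \ref{rem:onesidedderiv} supplies the corresponding one-sided equilibrium condition. Since $b(a)$ is still $C^1$ by the continuous implicit function theorem, the sign analysis above can be repeated with the one-sided derivatives of $P$, and the conclusion goes through unchanged.
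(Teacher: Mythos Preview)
Your proposal is correct and follows essentially the same route as the paper: reduce to a single interval via Propositions~\ref{prop:1d-exist} and~\ref{prop:1d-contiguous}, then use the first-variation/equilibrium condition together with the fact that $(\log f)'$ is odd and strictly increasing to force the endpoints to be symmetric, handling the non-$C^1$ case through one-sided derivatives exactly as in Remark~\ref{rem:onesidedderiv}. Your explicit parametrization $a\mapsto P(a)$ and the sign analysis of $P'$ are just a hands-on version of invoking Corollary~\ref{prop:1d-equil}; the extra sentence verifying that $f\to\infty$ at both ends (so that Proposition~\ref{prop:1d-exist} actually applies) is a nice touch the paper leaves implicit.
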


\begin{proof}
By Proposition \ref{prop:1d-exist}, a perimeter minimizer exists
for a given volume. By Proposition \ref{prop:1d-contiguous},
it is a single interval $[x_1,x_2]$.
Corollary \ref{prop:1d-equil} implies that
$$(\log f)'(x_1)+(\log f)'(x_2)=0$$
for the $C^1$ case. Since $(\log f)'$ is a strictly increasing odd function, 
we have $x_2=-x_1$ and the interval is symmetric about the origin. 

By Remark \ref{rem:onesidedderiv}, for the non-$C^1$ case
\begin{align}
(\log f)_L'(x_1) + (\log f)_L'(x_2) &\leq 0 \label{eq:singleleftder} \\
(\log f)_R'(x_1) + (\log f)_R'(x_2) &\geq 0 \label{eq:singlerightder}
\end{align}
where $(\log f)_L'$ and $(\log f)_R'$ denote the left and right derivatives. Since $f$ is symmetric and strictly log-convex, (\ref{eq:singleleftder}) gives that $x_1+x_2 \leq 0$, while (\ref{eq:singlerightder}) gives that $x_1+x_2 \geq 0$. Therefore $x_1=-x_2$ and the interval is symmetric about the origin. Furthermore, for every given volume there is a unique symmetric interval.
\end{proof}

\begin{lemma}
\label{lem:nbubblelessthan3}
Consider $\R$ with a continuous density that is nonincreasing on $(-\infty,0]$ and nondecreasing on $[0,\infty)$. Let $M$ be the density minimum set where $f(x)=f(0)$. Consider two components of the same bubble in a perimeter-minimizing $n$-bubble.  Then the component on the right contains no points to the left of $M$ and some to the right of $M$. Similarly the component on the left contains no points right of $M$ and some left of $M$.
\end{lemma}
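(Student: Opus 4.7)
Let $I_L=[a_L,b_L]$ and $I_R=[c,d]$ be the two components of $B$ with $b_L<c$, and write $M=[-a_1,a_2]$ (with $a_1,a_2\ge 0$). By Proposition \ref{prop:1d-contiguous} the strip $[b_L,c]$ is filled by contiguous intervals $C_1,\dots,C_k$ belonging to other bubbles (or to further components of $B$), with boundary points $b_L=y_0<y_1<\dots<y_k=c$. The plan is to combine a first-variation sliding argument, which rules out $y_i$'s lying strictly outside $M$, with a direct rearrangement that handles the degenerate case where every boundary between $I_L$ and $I_R$ lies in the flat region.

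For the sliding argument, I will move each $y_i$ rightward at rate $1/f(y_i)$, keeping $a_L$ and $d$ fixed. Matched rates at the two endpoints of each $C_j$ preserve its weighted volume, while $I_L$ gains and $I_R$ loses weighted volume at equal unit rates, so $V_B$ is preserved as well. The perimeter derivative is then $\sum_{i=0}^k(\log f)'(y_i)$ (one-sided, as in Remark \ref{rem:onesidedderiv}, if $f$ fails to be $C^1$). Since $f$ is nonincreasing on $(-\infty,0]$, each term is $\le 0$ whenever $y_i\le 0$, and for the strictly log-convex densities used in the main results of the paper it is strictly negative as soon as $y_i$ is strictly to the left of $M$.

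For the first assertion, if $c<-a_1$ then every $y_i\le c$ sits strictly left of $M$, so $\sum(\log f)'(y_i)<0$, contradicting minimality; hence $c\ge -a_1$, and the mirror variation (shifting leftward) gives $b_L\le a_2$ for the left component. For the second assertion, suppose $d\le a_2$, which together with $c\ge -a_1$ forces $I_R\subset M$. If $b_L<-a_1$, the same rightward sliding has $y_0=b_L$ contributing a strictly negative term while every other $y_i$ contributes a nonpositive term (since each $y_i\le c\le a_2$), so the sum is again strictly negative, contradiction.

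The remaining case, which I expect to be the main obstacle, is $b_L\ge -a_1$: the entire strip $[b_L,d]$ lies inside $M$, $f\equiv f(0)$ there, and the variational derivative vanishes identically. Here I will rearrange rather than perturb. Since $f$ is constant on $M$, I can rigidly translate $I_R$ so that it sits immediately to the right of $I_L$ and then place $C_1,\dots,C_k$ after it with the same lengths as before; all weighted volumes are preserved because $f$ is constant. After this rearrangement $I_L$ and the translated $I_R$ are adjacent components of the same bubble $B$, so they merge into a single interval, killing the boundary at $b_L$; every other boundary in this range still lies in $M$ and still contributes exactly $f(0)$ to perimeter. The total perimeter therefore drops by $f(0)>0$, contradicting minimality and forcing $d>a_2$. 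The corresponding statement about the leftward extent of $I_L$ follows from the mirror argument.
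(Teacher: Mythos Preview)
Your sliding variation is the right idea, but its infinitesimal form does not deliver the strict inequality you need under the lemma's hypotheses. The density $f$ is only assumed continuous and monotone on each half-line; it need not be differentiable, and even where $(\log f)'$ exists it can vanish at points strictly to the left of $M$ (take $f$ constant on a short interval at a level above $f(0)$). So from $c<-a_1$ you cannot conclude $\sum_i(\log f)'(y_i)<0$; you only get $\le 0$, which gives no contradiction with minimality. You flag this yourself by retreating to ``the strictly log-convex densities used in the main results,'' but the lemma is stated---and invoked in Proposition~\ref{prop:2n-1}---in the broader setting where that extra hypothesis is unavailable.

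The paper avoids this by performing a \emph{finite} slide and comparing endpoint values of $f$ directly rather than derivatives. To show $d>m_2$: slide the whole strip rightward until $c$ meets $d$; each moving boundary stays in $(-\infty,m_2]$ where $f$ is nonincreasing, so perimeter does not increase along the way, and at $c=d$ a boundary point disappears, dropping perimeter by $f(d)>0$. To show $c\ge m_1$: slide until $c$ reaches $m_1$; every moving boundary stays in $(-\infty,m_1]$ so its contribution is nonincreasing, while the contribution from $c$ strictly drops from $f(c_0)>f(0)$ to $f(m_1)=f(0)$ because the initial $c_0\notin M$. Only continuity and monotonicity are used. Your rearrangement for the fully-flat case is correct, but it is already subsumed by the first finite slide (merge at $c=d$); once you replace the infinitesimal step by a finite one, the separate case analysis becomes unnecessary.
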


\begin{proof}
Let $M=[m_1,m_2]$ and the two components be $[a,b]$ and $[c,d]$ with $a<b<c<d$. For the right component, we need to show that $c \geq m_1$ and $d > m_2$.
Slide everything between $b$ and $c$ to the right, preserving volumes while decreasing the volume of $[c,d]$. 
If $d \leq m_2$, then the perimeter would not increase before $c$ reaches $d$, and it would decrease at that moment, a contradiction. Hence $d > m_2$.
If $c < m_1$, then while sliding $c$ up to $m_1$, the density never increases and decreases near $m_1$. So perimeter decreases, a contradiction.
A similar argument applies to the left component.
\end{proof}

\begin{prop}
\label{prop:2n-1}
On $\R$ with a continuous density that is nonincreasing on $(-\infty,0]$ and nondecreasing on $[0,\infty)$, a perimeter-minimizing $n$-bubble has at most $2n-1$ components.
\end{prop}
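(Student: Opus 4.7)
I would proceed by contradiction. By Proposition \ref{prop:1d-contiguous}, the minimizer consists of finitely many contiguous intervals $C_1, \ldots, C_k$. First I would establish that each bubble contains at most $2$ components: if some bubble contained three components $C_{i_1}, C_{i_2}, C_{i_3}$ (ordered left to right), Lemma \ref{lem:nbubblelessthan3} applied to the pair $(C_{i_1}, C_{i_2})$ would force $C_{i_2}$ to have no points left of $M$ while requiring some points right of $M$, whereas applied to $(C_{i_2}, C_{i_3})$ it would force $C_{i_2}$ to have no points right of $M$, a contradiction. Hence $k \leq 2n$, and if $k \leq 2n - 1$ we are done.

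Suppose for contradiction $k = 2n$, so every bubble has exactly two components, one ``left-type'' (right endpoint $\leq m_2$, left endpoint $< m_1$) and one ``right-type'' (left endpoint $\geq m_1$, right endpoint $> m_2$). A right-type component cannot be immediately followed by a left-type one, for their shared boundary would then have to satisfy both $> m_2$ and $< m_1$, which is impossible. Hence in the ordering, positions $1, \ldots, n$ are all left-type and positions $n+1, \ldots, 2n$ are all right-type, and the boundary $y_n$ between $C_n$ and $C_{n+1}$ lies in $M$, giving $f(y_n) = f(0)$.

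The main obstacle is deriving a contradiction from this rigid structure. Let $A$ and $B$ denote the bubbles of $C_n$ and $C_{n+1}$, and let $L_B = C_q$ with $q \leq n-1$ and $R_A = C_p$ with $p \geq n+2$. I would produce a volume-preserving first variation by moving each $y_k$ ($k = q, q+1, \ldots, n$) rightward at spatial rate $f(0) v_0 / f(y_k)$: a direct check shows that every bubble's volume (including $A$, $B$, and the intermediate bubbles of $C_{q+1}, \ldots, C_{n-1}$) is preserved, since each affected component gains $f(0) v_0$ at one end and loses $f(0) v_0$ at the other. The first-order perimeter change equals $f(0) v_0 \sum_{k=q}^{n} (\log f)'(y_k)$. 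Since $y_n \in M$ gives $(\log f)'(y_n) = 0$ and each $y_k$ for $q \leq k \leq n-1$ lies in $(-\infty, m_1)$ where $f$ is nonincreasing, each $(\log f)'(y_k) \leq 0$. Under strict log-convexity of $f$, $(\log f)'$ is strictly increasing, so $(\log f)'(y_k) < 0$ strictly for $k < n$; the sum is then strictly negative, the variation strictly decreases perimeter, and minimality is contradicted. For densities merely monotonic beyond $M$ (where $(\log f)'$ may vanish on an interval outside $M$), the remaining case would be handled by an additional rearrangement argument exploiting that boundaries can be shifted freely within flat regions of $f$ at no perimeter cost, reducing to the strict case.
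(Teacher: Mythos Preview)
Your setup is correct and matches the paper: contiguity from Proposition \ref{prop:1d-contiguous}, at most two components per bubble via Lemma \ref{lem:nbubblelessthan3}, and in the $k=2n$ case the first $n$ components are left-type, the last $n$ are right-type, with $y_n \in M$.

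The gap is in the contradiction step. Your first-variation argument evaluates $(\log f)'$ at the moving endpoints, but the proposition assumes only that $f$ is \emph{continuous} and monotone on each half-line---no differentiability, let alone log-convexity. Even granting $f\in C^1$, the first-order change $\sum_{k=q}^{n}(\log f)'(y_k)$ is only $\leq 0$: nothing rules out $f$ being locally constant near every $y_k$ with $k<n$, in which case the sum vanishes and no contradiction follows. Your appeal to strict log-convexity imports a hypothesis not present in the statement, and the closing remark about ``reducing to the strict case'' is a promissory note, not an argument. Moreover, your variation cannot simply be continued as a finite slide: since you move $y_n$ itself, running it until $C_{n+1}$ collapses would push $y_n$ past $m_2$ into the region where $f$ increases, and the monotonicity bound would fail.

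The paper's remedy is to keep $y_n$ fixed and use a \emph{two-sided finite} slide instead of an infinitesimal one. With $L=C_n$, $R=C_{n+1}$, $R'=C_q$ the other component of $R$'s bubble, and $L'=C_p$ the other component of $L$'s bubble, it slides the block strictly between $R'$ and $L$ rightward (so $L$ shrinks and $R'$ grows) and simultaneously the block strictly between $R$ and $L'$ leftward (so $R$ shrinks and $L'$ grows), at matched volume rates so both bubbles are preserved. Every moving endpoint stays on its side of $M$ throughout, so perimeter is nonincreasing by bare monotonicity of $f$---no derivative is needed. The slide terminates when $L$ or $R$ collapses to a point; a boundary point is then deleted and the perimeter drops by at least $f(0)>0$, yielding the contradiction.
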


\begin{proof}
By Proposition \ref{prop:1d-contiguous}, all the components are contiguous. By Lemma \ref{lem:nbubblelessthan3}, each bubble has at most two components, so the $n$-bubble has at most $2n$ components. Moreover, if it has exactly $2n$ components, then the left component of any bubble lies to the left of the right component of every bubble. The right-most left component $L$ and the left-most right component $R$ meet at a point of minimum density. Denote the second components of the same bubbles by $L'$ and $R'$. They appear in the order $R', L, R, L'$. Now slide everything between $R'$ and $L$ to the right and everything between $R$ and $L'$ to the left, preserving volumes and not increasing perimeter, until either $L$ or $R$ disappears (all volume is contained in $L'$ or $R'$, respectively), reducing perimeter, a contradiction. Therefore the $n$-bubble has at most $2n-1$ components. 
\end{proof}

\begin{remark}
We suspect that the restriction to at most $2n-1$ components is sharp. In particular, we think that for some densities a perimeter-minimizing $n$-bubble for volumes $V_1 \ll V_2 \ll \cdots \ll V_n$ has $2n-1$ components: $V_1$ is centered on the origin flanked by $V_2/2$ on either side, which is flanked by $V_3/2$ on either side, and so on. 
Proposition \ref{lem:db-v1fixedv2large} proves this for $n=2$.
\end{remark}

\section{Double Bubbles on the Real Line}
\label{sect:db1d}
We now focus on the \emph{double} bubble and prove that perimeter minimizers are sometimes double intervals and sometimes triple intervals (Props. \ref{lem:db-equalvol} and \ref{lem:db-v1fixedv2large}).
This is consistent with Proposition \ref{prop:2n-1}, which states that a perimeter-minimizing double bubble has no more than 3 components. Theorem \ref{thm:db-1dmain} analyzes when each type occurs (see Fig. \ref{fig:orange}).

\begin{definition}
A \emph{double interval $(x_1,x_2,x_3)$} for prescribed volumes
 $V_1\leq V_2$ consists of two contiguous intervals $[x_1, x_2]$, $[x_2, x_3]$ of volumes $V_1$ and $V_2$, respectively, as in Figure \ref{fig:doubleinterval}. For a $C^1$ density $f$,
a double bubble is \emph{in equilibrium}
if it satisfies the consequence of perimeter minimization of Corollary \ref{prop:1d-equil}:
$$(\log f)'(x_1)+(\log f)'(x_2)+(\log f)'(x_3)=0.$$ The term also applies to the generalization to one-sided derivatives of Remark \ref{rem:onesidedderiv}.

The \emph{triple interval $(y_1,y_2)$} for prescribed volumes
$V_1 \leq V_2$ consists of three contiguous intervals, two of which flank the middle interval and enclose an equal volume, as in Figure \ref{fig:tripleinterval}. The middle interval is $[-y_1, y_1]$ and encloses volume $V_1$. The left interval is $[-y_2, -y_1]$ and the right interval is $[y_1, y_2]$, and each encloses volume $V_2/2$.

For a symmetric continuous, piecewise $C^1$ density, the triple interval is in equilibrium.
\end{definition}

\begin{figure}[ht]
  \includegraphics[width=0.7
\textwidth]{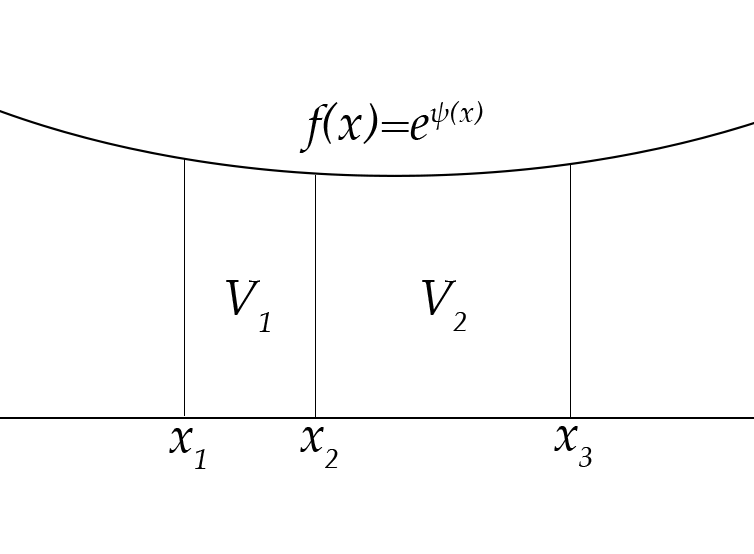}
\vspace{-10mm}
\caption{A double interval on the real line.}
\label{fig:doubleinterval}
\end{figure}

\begin{figure}[ht]
  \includegraphics[width=0.7
\textwidth]{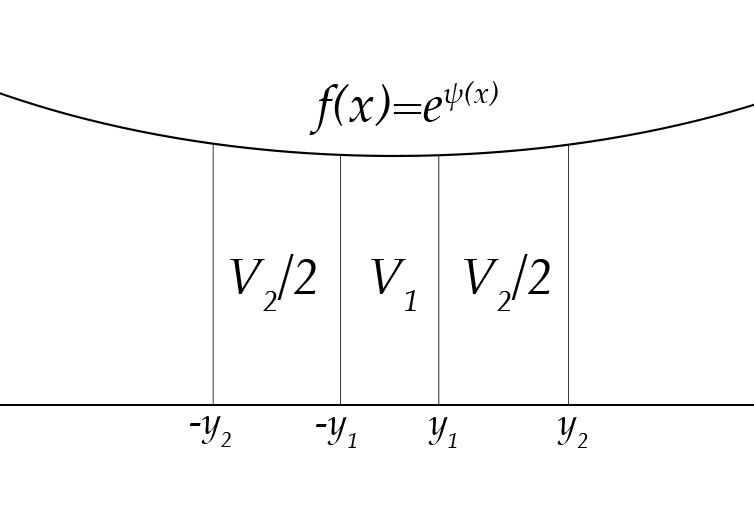}
\vspace{-10mm}
\caption{A triple interval on the real line.}
\label{fig:tripleinterval}
\end{figure}

\begin{figure}[h!]
  \includegraphics[width=0.7\textwidth]{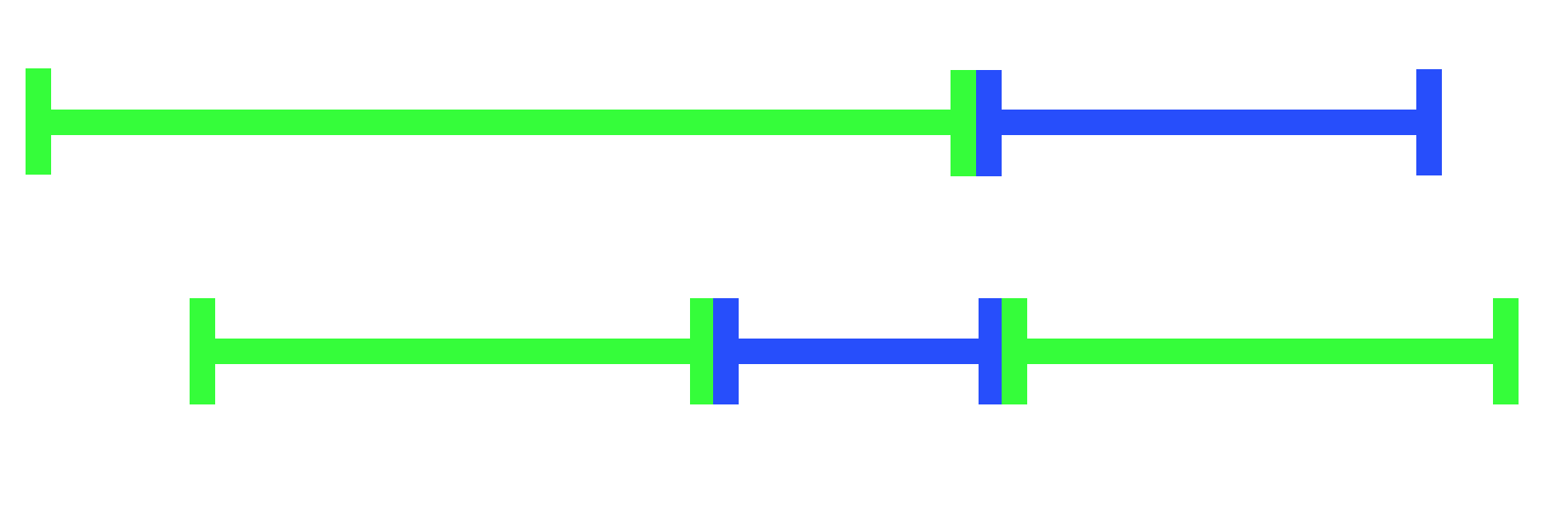}
\caption{Double and triple intervals in equilibrium on the real line.}
\label{fig:1ddb}
\end{figure}

Proposition \ref{prop:db-2or3} will characterize perimeter-minimizing double bubbles. First we show that a log-convex density can be considered as a convex density
in volume coordinate.

\begin{lemma}[Volume coordinate]
\label{lem:volcoord}
On $\R$ with density f, let
$$V = \int_0^x f.$$
Then $f$ is a log-convex function of $x$ if and only if $f$ is a convex function of $V$.
\end{lemma}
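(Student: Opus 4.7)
The plan is to push through the substitution $V = V(x) = \int_0^x f$ and observe that it converts log-convexity in $x$ directly into convexity in $V$. Since $f$ is a positive density, $x \mapsto V(x)$ is a strictly increasing continuous bijection onto an open interval, with $C^1$ inverse $x(V)$ satisfying $x'(V) = 1/f(x(V))$. Write $g(V) := f(x(V))$ for $f$ viewed as a function of $V$.

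For $C^1$ density $f$, a one-line chain rule gives
$$g'(V) = f'(x) \cdot x'(V) = \frac{f'(x)}{f(x)} = (\log f)'(x).$$
Since $V(x)$ is strictly increasing in $x$, $g'$ is nondecreasing as a function of $V$ if and only if $(\log f)'$ is nondecreasing as a function of $x$. Convexity of a $C^1$ function on an interval is equivalent to monotonicity of its derivative, so $g$ is convex in $V$ iff $\log f$ is convex in $x$, i.e., iff $f$ is log-convex. This chain-rule computation is the heart of the argument.

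For merely continuous positive $f$, I would argue by approximation. Mollifying $f$ by a smooth nonnegative kernel preserves log-convexity, since log-convex functions are closed under nonnegative linear combinations and translations, and their convolutions (being limits of such combinations) are again log-convex. This yields smooth log-convex approximations $f_n \to f$ for which the $C^1$ argument applies. The induced volume coordinates and inverse maps converge pointwise, so $g_n \to g$ pointwise, and pointwise limits of convex (resp. log-convex) functions are convex (resp. log-convex), transferring the equivalence to $f$.

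The calculation itself is short; the only real obstacle is the non-smooth case, handled routinely by the mollification argument above.
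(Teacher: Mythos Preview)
Your $C^1$ chain-rule computation $g'(V)=(\log f)'(x)$ is exactly the paper's argument; the paper records precisely this identity in a single displayed line and stops. The only difference is in the non-smooth case. The paper simply notes that the same identity holds for one-sided derivatives: if either $\log f$ (in $x$) or $g$ (in $V$) is convex, it automatically has one-sided derivatives, and since $V(x)$ is $C^1$ with positive derivative $f$, the other function inherits one-sided derivatives with $g'_\pm(V)=(\log f)'_\pm(x)$. Monotonicity of one side is then equivalent to monotonicity of the other, giving both implications at once with no approximation.

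Your mollification route, as written, only covers one direction. You mollify a log-convex $f$ to get smooth log-convex $f_n$, and that yields the implication ``$f$ log-convex $\Rightarrow$ $g$ convex.'' For the converse you cannot mollify $f$ in $x$---that does not preserve the hypothesis that $g$ is convex in $V$---and mollifying $g$ in $V$ does not directly hand you a density on $\R$; you would have to solve the ODE $V_n'=g_n(V_n)$ to reconstruct one and then argue convergence. Even in the forward direction there is a wrinkle you pass over: mollifying $f$ changes the volume coordinate to $V_n$, so $g_n$ is defined via a moving change of variables and the convergence $g_n\to g$ needs a word. All of this can be repaired, but the one-sided-derivative observation in the paper avoids it entirely and handles both directions symmetrically.
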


\begin{proof} 
The result follows from the fact that the one-sided derivatives satisfy
$$\frac{df}{dV} = \frac{df/dx}{dV/dx} = \frac{df/dx}{f} = \frac{d(\log f)}{dx}.$$
\end{proof}

The next lemma shows how to convert the volume coordinate back to
the positional coordinate.

\begin{lemma}
\label{lem:convertvolcoord}
On $\R$ with density f, let
$$V = \int_0^x f.$$
Then
$$x(V)=\int_0^V \frac{1}{f},$$
where $f$ is a function of $V$.
\end{lemma}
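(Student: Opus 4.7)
The plan is to use the inverse function theorem on the map $V(x)=\int_0^x f$. Since $f$ is a positive density, $V$ is a strictly increasing $C^1$ (or at least absolutely continuous, piecewise $C^1$) function of $x$ with $V(0)=0$, so it admits a well-defined inverse $x(V)$ with $x(0)=0$.

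First I would differentiate $V$ with respect to $x$ using the fundamental theorem of calculus to get $dV/dx = f(x)$. By the inverse function theorem, since $f$ is positive and hence $dV/dx\neq 0$, we obtain
$$\frac{dx}{dV} = \frac{1}{dV/dx} = \frac{1}{f},$$
where here $f$ is being regarded as a function of the volume variable $V$ via composition with $x(V)$, in exactly the same convention used in the preceding Lemma \ref{lem:volcoord}.

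Next I would integrate this relation from $0$ to $V$. Since $x(0)=0$, the fundamental theorem of calculus gives
$$x(V) = x(0) + \int_0^V \frac{dx}{dW}\,dW = \int_0^V \frac{1}{f}\,dW,$$
which is exactly the claimed formula.

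There is no real obstacle here; the only thing worth a moment of care is the regularity. Under the paper's standing assumptions (continuous, log-convex density) $f$ is continuous and positive, so $V(x)$ is $C^1$ with nonvanishing derivative and $x(V)$ is $C^1$; the computation above is then literal. If one wants to allow merely the one-sided-derivative setting used elsewhere in the paper, the same identity holds almost everywhere and both sides are absolutely continuous, so the integrated form still gives the lemma.
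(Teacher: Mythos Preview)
Your argument is correct and essentially the same as the paper's: both rest on $dV/dx=f$ and the fundamental theorem of calculus. The paper simply writes the change of variables in one line, $\int_0^V \frac{1}{f}\,dV = \int_0^x \frac{1}{f}\cdot f\,dx = x$, which is exactly your inverse-function computation integrated.
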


\begin{proof}
We have
$$\int_0^V \frac{1}{f}\, dV = \int_0^x \frac{1}{f} \frac{dV}{dx} \, dx
=\int_0^x \frac{1}{f} f \, dx = x.$$
\end{proof}

\begin{lemma}
\label{lem:db-2double}
On $\R$ with symmetric, strictly log-convex density,
for prescribed volumes $V_1\le V_2$,
if a perimeter-minimizing double bubble
has two components, then it is the unique double interval in equilibrium (up to reflection).
\end{lemma}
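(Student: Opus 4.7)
The plan is to reduce the equilibrium condition of Corollary \ref{prop:1d-equil} to a strictly monotone one-parameter equation in the volume coordinate; uniqueness will then be automatic.

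First, by Proposition \ref{prop:1d-contiguous} the two components of the minimizer are contiguous, so the minimizer is a double interval $(x_1,x_2,x_3)$ with $\int_{x_1}^{x_2} f = V_1$ and $\int_{x_2}^{x_3} f = V_2$. Corollary \ref{prop:1d-equil} (using the one-sided version in Remark \ref{rem:onesidedderiv} if $f$ is not $C^1$) says the boundary points satisfy
$$(\log f)'(x_1)+(\log f)'(x_2)+(\log f)'(x_3)=0.$$

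Next, I pass to the volume coordinate $V=\int_0^x f$ of Lemma \ref{lem:volcoord} and set $h(V):=df/dV=(\log f)'(x(V))$. Strict log-convexity of $f$ in $x$ is, by Lemma \ref{lem:volcoord}, strict convexity of $f$ in $V$, so $h$ is strictly increasing (interpreted via one-sided derivatives if $f$ is not $C^1$). Writing $t:=V(x_1)$, one has $V(x_2)=t+V_1$ and $V(x_3)=t+V_1+V_2$, and the equilibrium condition becomes
$$\phi(t):=h(t)+h(t+V_1)+h(t+V_1+V_2)=0.$$
Because $\phi$ is a sum of three strictly increasing functions of $t$, it is strictly increasing, so the equation $\phi(t)=0$ has at most one solution. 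This unique $t$, together with the prescribed volumes $V_1,V_2$, determines $(x_1,x_2,x_3)$ completely, giving at most one equilibrium double interval with $V_1$ on the left; the configuration with $V_1$ on the right is then its reflection across the origin by symmetry of $f$, accounting for the ``up to reflection'' clause.

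The only subtlety I anticipate is handling densities that are strictly log-convex but fail to be $C^1$. There $(\log f)'$ must be replaced by one-sided derivatives as in Remark \ref{rem:onesidedderiv}, and the equilibrium becomes the pair of inequalities on sums of left/right derivatives. The monotonicity argument still goes through, because $f$ is still strictly convex in the volume coordinate, so the one-sided derivatives $h_L, h_R$ are each strictly increasing and the corresponding one-sided versions of $\phi$ are both strictly monotone, pinning $t$ uniquely.
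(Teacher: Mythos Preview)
Your argument is correct and follows essentially the same approach as the paper: reduce to a double interval via Proposition \ref{prop:1d-contiguous}, invoke the equilibrium condition from Corollary \ref{prop:1d-equil}, and observe that the equilibrium equation is strictly monotone in a single sliding parameter. The only cosmetic difference is that you parametrize by the volume coordinate $t=V(x_1)$ (so the other two endpoints are $t+V_1$ and $t+V_1+V_2$, and monotonicity of $\phi$ is immediate), whereas the paper parametrizes by $x_1$ directly and notes that $x_2,x_3$ are increasing functions of $x_1$; both handle the non-$C^1$ case by the same appeal to one-sided derivatives via Remark \ref{rem:onesidedderiv}.
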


\begin{proof}
Let $f$ be the density.
By Proposition \ref{prop:1d-contiguous}, the intervals are contiguous,
so the double bubble must be a double interval $(x_1,x_2,x_3)$.
If $f$ is $C^1$, Corollary \ref{prop:1d-equil} implies that
the equilibrium condition
$$(\log f)'(x_1)+(\log f)'(x_2)+(\log f)'(x_3)=0.$$
holds. Moreover, assuming that the region on the left has volume $V_1$, this equation uniquely determines
the double interval: as $x_1$ moves, $x_2$ and $x_3$ also move
as strictly increasing functions of $x_1$. Hence the left-hand side
is a strictly increasing function of $x_1$ which tends to a negative value
as $x_1\to-\infty$ and tends to a positive value as $x_1\to\infty$.
The double interval satisfying the equation must therefore be unique.

If $f$ is not $C^1$, a similar argument applies using one-sided derivatives and Remark \ref {rem:onesidedderiv}.
\end{proof}

Proposition \ref{prop:constantdensity} shows that the strict log-convexity hypothesis in Lemma \ref{lem:db-2double} is necessary.

\begin{lemma}
\label{lem:db-3triple}
On $\R$ with symmetric, strictly log-convex density,
for prescribed volumes $V_1\le V_2$, if a perimeter-minimizing double bubble has three components, then it is the triple interval.
\end{lemma}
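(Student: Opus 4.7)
The plan is to first show that any three-component perimeter minimizer is symmetric about the origin, and then verify that the middle interval encloses the smaller volume $V_1$. By Proposition \ref{prop:1d-contiguous} the three components are contiguous with boundary points $x_1<x_2<x_3<x_4$, and by Lemma \ref{lem:nbubblelessthan3} each bubble has at most two components; hence one bubble (the ``outer'' one) occupies $[x_1,x_2]\cup[x_3,x_4]$ while the other occupies $[x_2,x_3]$.

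To establish symmetry I would invoke Corollary \ref{prop:1d-equil} twice. Applied to the pair $(x_2,x_3)$, the two exterior blocks $[x_1,x_2]$ and $[x_3,x_4]$ both belong to the outer bubble, so
$$(\log f)'(x_2)+(\log f)'(x_3)=0.$$
Since $(\log f)'$ is strictly increasing (by strict log-convexity) and odd (by symmetry of $f$), this forces $x_3=-x_2$. Applied to all four boundary points (whose exterior blocks are both empty), $\sum_{i=1}^4 (\log f)'(x_i)=0$, and combining with the previous equation yields $x_4=-x_1$. So the configuration is symmetric: setting $y_1 := x_3 = -x_2$ and $y_2 := x_4 = -x_1$, we get the middle interval $[-y_1,y_1]$ and two flanking intervals of equal volume by symmetry of $f$. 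For a non-$C^1$ density, the same argument goes through with one-sided derivatives via Remark \ref{rem:onesidedderiv}, since strict log-convexity keeps the one-sided derivatives of $\log f$ strictly increasing.

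It remains to show the middle interval encloses $V_1$ rather than $V_2$. Writing $F(x)=\int_0^x f$, the outer coordinate satisfies $F(y_2)=(V_1+V_2)/2$, determined by total volume alone and hence the same for either choice of middle volume. The inner coordinate satisfies $2F(y_1)=V$, where $V$ is the middle volume, so $y_1$ is smaller when $V$ is smaller. Since $f$ is strictly increasing on $[0,\infty)$ (strict log-convexity with symmetry puts the unique minimum at the origin), the perimeter $2(f(y_1)+f(y_2))$ is strictly smaller with $V=V_1$ than with $V=V_2$, unless $V_1=V_2$ in which case the two configurations coincide. Thus a three-component perimeter minimizer must have $V_1$ in the middle, identifying it as the triple interval. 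The only subtlety I anticipate is bookkeeping on which blocks belong to which bubble when applying Corollary \ref{prop:1d-equil}; no serious obstacle is expected.
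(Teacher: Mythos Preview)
Your proof is correct and follows essentially the same route as the paper's: contiguity via Proposition~\ref{prop:1d-contiguous}, symmetry of the middle interval and then the whole configuration via Corollary~\ref{prop:1d-equil} (with Remark~\ref{rem:onesidedderiv} for the non-$C^1$ case), and finally the comparison showing $V_1$ belongs in the middle because the outer endpoints are fixed by total volume while the inner ones are minimized by taking the smaller volume. Your explicit appeal to Lemma~\ref{lem:nbubblelessthan3} to pin down the $ABA$ block structure is a welcome addition; the paper's proof leaves this implicit when it speaks of ``the middle interval.''
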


\begin{proof}
Let $V_1\leq V_2$ be the prescribed volumes.
By Corollary \ref{prop:1d-contiguous}, the intervals are contiguous. By applying Corollary \ref{prop:1d-equil} or Remark \ref {rem:onesidedderiv} to the middle interval we find that the middle interval is symmetric about the origin, and similarly the whole double bubble is also symmetric about the origin. 

Finally, it is optimal to place $V_1$ in the middle: since the total volume enclosed in the double bubble is the same regardless of which bubble is in the middle, we only need to examine the two inner boundary points. Since the perimeter is minimized when these points are nearest to the origin, the optimal choice is for the middle bubble
to enclose volume $V_1$.
Thus the perimeter-minimizing configuration is the triple interval.
\end{proof}

We can summarize the results of
Proposition \ref{prop:2n-1} and Lemmas \ref{lem:db-2double} and \ref{lem:db-3triple} in the following proposition.

\begin{prop}
\label{prop:db-2or3}
On $\R$ with symmetric, strictly log-convex density $f$,
for prescribed volumes $V_1\le V_2$,
a perimeter-minimizing double bubble is one of the following:
\begin{enumerate}[label = (\alph*)]
\item the unique double interval $(x_1,x_2,x_3)$ in equilibrium (up to reflection) or
\item the triple interval $(y_1,y_2)$.
\end{enumerate}
\end{prop}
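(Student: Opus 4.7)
The plan is to assemble this proposition as a direct corollary of the three results it cites, so the main task is just to verify that the hypotheses line up cleanly. First I would check that the hypothesis of Proposition \ref{prop:2n-1} is automatic here: a symmetric, strictly log-convex density $f$ on $\R$ has $\log f$ convex and even, hence nonincreasing on $(-\infty,0]$ and nondecreasing on $[0,\infty)$, and the same monotonicity passes to $f$ itself via the monotonicity of the exponential. So Proposition \ref{prop:2n-1} applies with $n=2$, yielding a perimeter-minimizing double bubble with at most $2n-1 = 3$ components.

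Next I would observe that the minimizer has at least two components, since both prescribed volumes $V_1,V_2>0$ must be enclosed and no single interval can serve as two bubbles. Combined with the upper bound above, the number of components is exactly $2$ or $3$. At this point the proof splits into two cases matching (a) and (b).

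In the two-component case, Lemma \ref{lem:db-2double} directly gives that the minimizer is the unique double interval in equilibrium (up to reflection), which is exactly alternative (a). In the three-component case, Lemma \ref{lem:db-3triple} identifies the minimizer as the triple interval, which is exactly alternative (b). Assembling these, every perimeter-minimizing double bubble falls into one of (a) or (b).

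There is essentially no obstacle here beyond bookkeeping: the substantive content (existence, contiguity, equilibrium, the $2n-1$ bound, and the uniqueness of each configuration) has already been established in the earlier lemmas, and the present proposition is a one-line synthesis. The only thing worth being careful about is implicitly invoking existence, which is why the cited lemmas take a perimeter-minimizing bubble as input rather than asserting its existence; Proposition \ref{prop:db-2or3} then has the same conditional form, classifying the minimizer whenever one exists.
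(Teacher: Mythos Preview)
Your proposal is correct and matches the paper's approach exactly: the paper presents this proposition as a summary of Proposition~\ref{prop:2n-1} and Lemmas~\ref{lem:db-2double} and~\ref{lem:db-3triple}, without even writing out a separate proof. Your verification that a symmetric, strictly log-convex density satisfies the monotonicity hypothesis of Proposition~\ref{prop:2n-1} and your observation about the conditional (classification) nature of the statement are both apt additions.
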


\noindent See Figure \ref{fig:1ddb}.

\subsection*{Volume and perimeter relationships}
To understand better the transition from double to triple intervals, we examine volumes and perimeters more carefully. Let  $f$ be a symmetric, strictly log-convex, and $C^1$ density. For prescribed volumes $V_1 \leq V_2$,
let $P_2$ be the perimeter of the double interval in equilibrium
and $P_3$ the perimeter of the triple interval.
In volume coordinates (Lemma \ref{lem:volcoord}), we have
\begin{align*}
P_2 &= f(\widetilde{V})+f(\widetilde{V}+V_1)+f(\widetilde{V}+V_1+V_2), \\
P_3 &= 2\bracket{f\paren{\frac{V_1}{2}}+f\paren{\frac{V_1+V_2}{2}}},
\end{align*}
where $\widetilde{V}$ is the unique volume satisfying
the equilibrium condition for the double interval
\begin{equation}
\label{eq:equidoublevol}
f'(\widetilde{V})+f'(\widetilde{V}+V_1)+f'(\widetilde{V}+V_1+V_2)=0.
\end{equation}
Notice that the derivatives are in volume coordinates:
$$f'(V)=(\log f)'(x) \text{ where } V=\int_0^x f.$$
Taking derivatives of $P_2$ and $P_3$ yields
\begin{align}
P_2' &= f'(\widetilde{V}+V_1+V_2)V_2'-
f'(\widetilde{V})V_1',\label{eq:p2'}\\
P_3' &= f'\paren{\frac{V_1}{2}}V_1'+
f'\paren{\frac{V_1+V_2}{2}}(V_1'+V_2'),\label{eq:p3'}
\end{align}
where we used the equilibrium condition from
\eqref{eq:equidoublevol} for simplification.

\subsection*{Characterizations of when the double or triple interval
is perimeter minimizing}

\begin{figure}[!ht]
\includegraphics[width=0.8\textwidth]{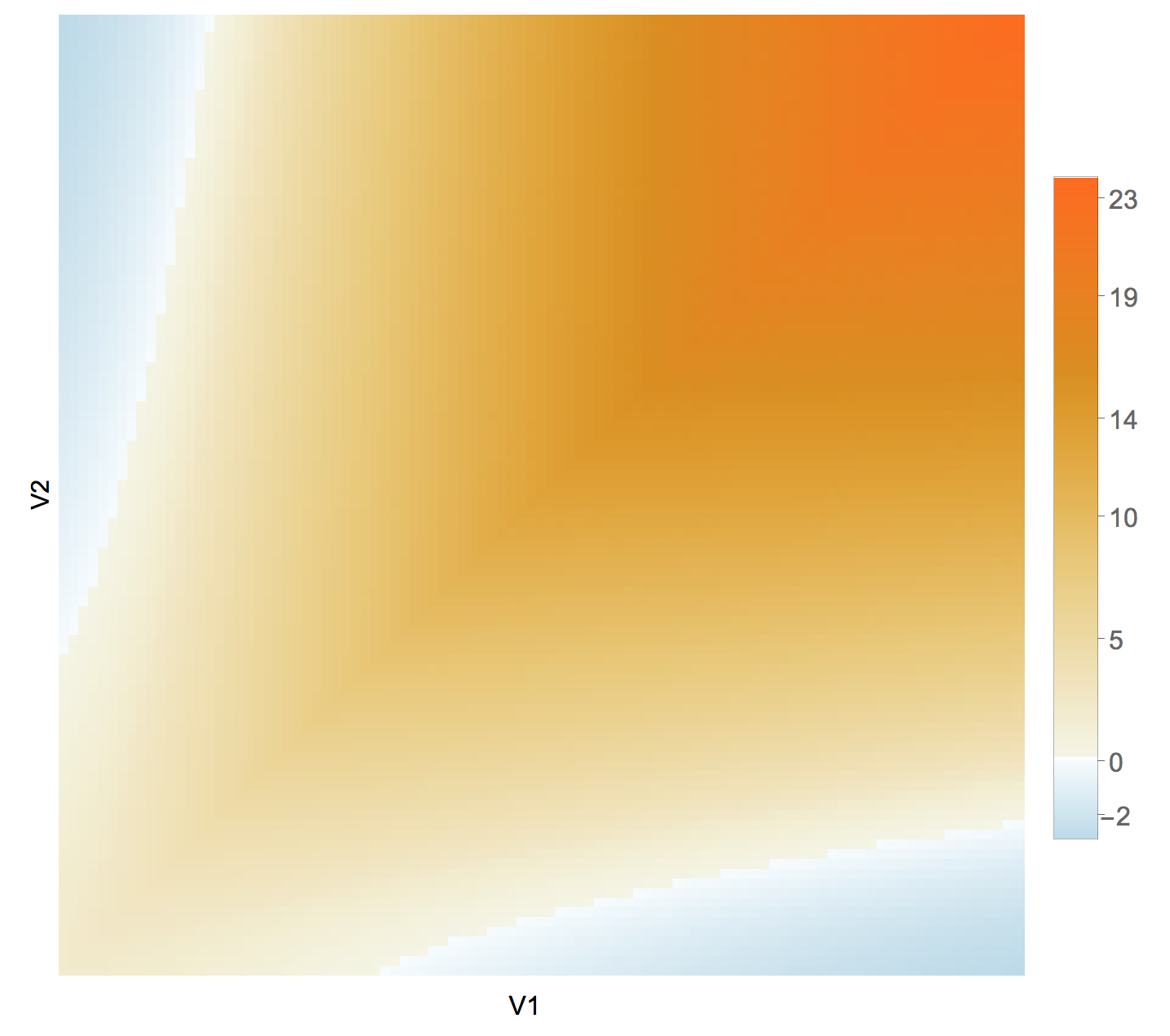}
\caption{The figure above is a numerical computation which represents the value of the perimeter difference $\mu(V_1, V_2)$ for Borell density $f(x)=e^{x^2}$. The orange color marks the region in which the double interval has lesser perimeter, the blue color represents the region in which the triple interval has lesser perimeter, and the white curve marks the tie point between the double and triple intervals. Computed in Mathematica.}
\label{fig:orange}
\end{figure}

\begin{definition}
For prescribed volumes $V_1\leq V_2$, let
$$\mu(V_1,V_2)=P_3-P_2$$
be the difference between the perimeter $P_3$ of the triple interval
and the perimeter $P_2$ of the double interval in equilibrium.
\end{definition}

By Proposition \ref{prop:db-2or3}, we obtain the following
characterization.
\begin{enumerate}[label = (\alph*)]
\item If $\mu(V_1,V_2)<0$,
then the perimeter-minimizing double bubble is uniquely the triple interval.
\item If $\mu(V_1,V_2)>0$,
then the perimeter-minimizing double bubble is uniquely the double interval
in equilibrium.
\item If $\mu(V_1,V_2)=0$,
then the perimeter-minimizing double bubble is either
the triple interval or the double interval in equilibrium.
\end{enumerate}

Let $f$ be a $C^1$ density.
Observe that by equations \eqref{eq:p2'} and \eqref{eq:p3'},
$\mu$ is a $C^1$ function with partial derivatives
\begin{align*}
\pdif{\mu}{V_1} &= f'\paren{\frac{V_1}{2}}+f'\paren{\frac{V_1+V_2}{2}}
+f'(\widetilde{V}),\\
\pdif{\mu}{V_2} &= f'\paren{\frac{V_1+V_2}{2}}-f'(\widetilde{V}+V_1+V_2).
\end{align*}

The remainder of this section investigates the behavior of $\mu$.

\begin{lemma}
\label{lem:db-bdvdouble}
On $\R$ with symmetric, strictly log-convex, $C^1$ density,
for prescribed volumes $V_1< V_2$,
$$-\frac{V_1+V_2}{2}<\widetilde{V}<-V_1.$$
\end{lemma}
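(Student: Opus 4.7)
The plan is to work entirely in the volume coordinate $V$ introduced in Lemma \ref{lem:volcoord}. In these coordinates $f$ is strictly convex in $V$, so $f'$ (with respect to $V$) is strictly increasing. Moreover, since $f$ is symmetric in $x$, the change-of-variables map $x\mapsto V(x)=\int_0^x f$ is itself odd, so $f$ is an even function of $V$; hence $f'(V)$ is an odd, strictly increasing function of $V$. This oddness is the only nontrivial observation needed, and it is essentially a direct consequence of the symmetry of $f$ combined with Lemma \ref{lem:volcoord}.

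Next I define
$$h(\widetilde V)=f'(\widetilde V)+f'(\widetilde V+V_1)+f'(\widetilde V+V_1+V_2),$$
so that the equilibrium condition \eqref{eq:equidoublevol} reads $h(\widetilde V)=0$. As a sum of strictly increasing functions, $h$ is strictly increasing, so it has at most one root; this is the value of $\widetilde V$ in the statement.

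The remaining step is to check that $h$ is strictly positive at the claimed upper endpoint and strictly negative at the claimed lower endpoint, and then invoke monotonicity. At $\widetilde V=-V_1$, the three arguments are $-V_1,\,0,\,V_2$, so by oddness
$$h(-V_1)=-f'(V_1)+f'(0)+f'(V_2)=f'(V_2)-f'(V_1)>0$$
since $V_1<V_2$ and $f'$ is strictly increasing. At $\widetilde V=-(V_1+V_2)/2$, the arguments are $-(V_1+V_2)/2,\,(V_1-V_2)/2,\,(V_1+V_2)/2$; the outer two terms cancel by oddness, leaving
$$h\!\left(-\tfrac{V_1+V_2}{2}\right)=f'\!\left(\tfrac{V_1-V_2}{2}\right)<0,$$
again because $V_1<V_2$ and $f'$ is odd and strictly increasing.

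Strict monotonicity of $h$ then forces the unique root $\widetilde V$ to lie strictly between $-(V_1+V_2)/2$ and $-V_1$, which is the desired inequality. There is no genuine obstacle here; the only place one must be a little careful is the verification that the volume-coordinate derivative inherits oddness from the positional symmetry of $f$, after which the argument reduces to evaluating $h$ at two convenient points.
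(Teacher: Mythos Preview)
Your proof is correct and follows essentially the same approach as the paper: both evaluate the strictly increasing function $h(\widetilde V)=f'(\widetilde V)+f'(\widetilde V+V_1)+f'(\widetilde V+V_1+V_2)$ at the two endpoints and use the sign change to locate the root. You are simply more explicit than the paper in invoking the oddness of $f'$ (in volume coordinate) to justify the signs at those endpoints, which the paper leaves to the reader.
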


\begin{proof}
Let $f$ be the density.
Consider the equilibrium condition 
$$f'(\widetilde{V})+f'(\widetilde{V}+V_1)+f'(\widetilde{V}+V_1+V_2)=0.$$
By Lemma \ref{lem:volcoord}, $f$ is convex in volume coordinate,
so the left-hand side is strictly increasing in $\widetilde{V}$.
At $\widetilde{V}=-(V_1+V_2)/2$, the left-hand side is negative,
while at $\widetilde{V}=-V_1$, the left-hand side is positive.
Hence the value of $\widetilde{V}$ that makes the left-hand side
vanish must lie inside the desired range.
\end{proof}

\begin{lemma}
\label{lem:db-muincdec}
Consider $\R$ with symmetric, strictly log-convex, $C^1$ density.
Given $V_2>0$,
$\mu$ is a strictly increasing function of $V_1 \leq V_2$.
Given $V_1>0$,
$\mu$ is a strictly decreasing function of $V_2 \geq V_1$.
\end{lemma}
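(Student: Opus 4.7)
The plan is to examine the two partial derivatives of $\mu$ displayed just above the statement and show that $\pdif{\mu}{V_1} > 0$ and $\pdif{\mu}{V_2} \le 0$ on $\{0<V_1\le V_2\}$, with the second inequality strict when $V_1<V_2$. The core input is Lemma \ref{lem:db-bdvdouble}, which gives $\widetilde V > -(V_1+V_2)/2$ when $V_1<V_2$. Combined with the facts that $f'$ in volume coordinates (equal, by the displayed formula after \eqref{eq:equidoublevol}, to $(\log f)'$ in the positional variable) is strictly increasing by Lemma \ref{lem:volcoord} and odd by symmetry of $f$, this controls both partial derivatives.

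First I would handle $\pdif{\mu}{V_1}$. The bound $\widetilde V \ge -(V_1+V_2)/2$ together with monotonicity of $f'$ gives $f'(\widetilde V) \ge f'(-\tfrac{V_1+V_2}{2}) = -f'(\tfrac{V_1+V_2}{2})$ by oddness. Substituting into the formula cancels the middle term and leaves $\pdif{\mu}{V_1} \ge f'(V_1/2) > 0$, since $V_1/2 > 0$. Hence $\mu$ is strictly increasing in $V_1$ on $(0, V_2]$.

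Next I would handle $\pdif{\mu}{V_2}$. The same bound rearranges to $\widetilde V + V_1 + V_2 \ge \tfrac{V_1+V_2}{2}$, so $\pdif{\mu}{V_2} = f'(\tfrac{V_1+V_2}{2}) - f'(\widetilde V + V_1 + V_2) \le 0$, strictly when $V_1<V_2$. At the boundary $V_1=V_2$ the equilibrium double interval is symmetric about the origin (by the uniqueness part of Lemma \ref{lem:db-2double}), so $\widetilde V = -V_1 = -\tfrac{V_1+V_2}{2}$ and $\pdif{\mu}{V_2}=0$ there. Strict decrease of $V_2 \mapsto \mu(V_1,V_2)$ on $[V_1,\infty)$ nonetheless follows by integration: the derivative is strictly negative on the open set $V_2>V_1$, so its integral over any $[a,b]\subseteq [V_1,\infty)$ with $a<b$ is strictly negative.

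There is no real obstacle: Lemma \ref{lem:db-bdvdouble} has done the substantial work, and the rest is a bookkeeping exercise with the oddness and monotonicity of $f'$. The only mild subtlety is that $\pdif{\mu}{V_2}$ vanishes on the diagonal $V_1=V_2$, which is why I would need the integration argument to upgrade the pointwise bound to strict monotonicity on the closed half-line $V_2 \ge V_1$.
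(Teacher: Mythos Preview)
Your proposal is correct and follows essentially the same route as the paper: both sign arguments come directly from the bound $\widetilde V>-(V_1+V_2)/2$ of Lemma~\ref{lem:db-bdvdouble}, combined with strict monotonicity and oddness of $f'$ in volume coordinates. The paper simply writes the strict inequalities for $V_1<V_2$ (resp.\ $V_2>V_1$) and lets continuity handle the endpoint, whereas you make the oddness explicit and spell out the boundary case $V_1=V_2$ via $\widetilde V=-V_1$ and an integration step; this extra care is harmless but not needed for the argument to go through.
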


\begin{proof}
Fix $V_2$. For $V_1<V_2$, we have
$$\pdif{\mu}{V_1} = f'\paren{\frac{V_1}{2}}+f'\paren{\frac{V_1+V_2}{2}}
+f'(\widetilde{V})>f'\paren{\frac{V_1}{2}}>0$$
due to Lemma \ref{lem:db-bdvdouble}.
Now fix $V_1$. For $V_2>V_1$, we have
$$\pdif{\mu}{V_2} = f'\paren{\frac{V_1+V_2}{2}}-f'(\widetilde{V}+V_1+V_2)
<0$$
due to Lemma \ref{lem:db-bdvdouble}.
\end{proof}

\begin{prop}
\label{lem:db-equalvol}
On $\R$ with symmetric, strictly log-convex, $C^1$ density,
for equal prescribed volumes $V_1=V_2$,
we have $\mu > 0$ (so the double interval is better).
\end{prop}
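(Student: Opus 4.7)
The plan is to exploit the symmetry of $f$ to pin down $\widetilde V$ explicitly when $V_1=V_2$, so that $\mu$ reduces to an elementary expression. With $V_1=V_2=V$, I first claim that $\widetilde V=-V$. By Lemma \ref{lem:volcoord}, $f$ is strictly convex as a function of volume; moreover, since $f(x)$ is even and $V(x)=\int_0^x f$ is odd, $f$ remains even in volume coordinates, so $f'$ is odd. Plugging $\widetilde V=-V$ into the equilibrium condition \eqref{eq:equidoublevol} yields $f'(-V)+f'(0)+f'(V)=0$, which holds automatically; strict convexity makes this root unique.

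Substituting $\widetilde V=-V$ into the volume-coordinate formulas for $P_2$ and $P_3$ gives
$$P_2 = f(-V)+f(0)+f(V) = 2f(V)+f(0), \qquad P_3 = 2f(V/2)+2f(V),$$
so that
$$\mu(V,V) = P_3 - P_2 = 2f(V/2) - f(0).$$

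To finish, I would observe that strict log-convexity together with symmetry forces $f$ to attain its unique minimum at $x=0$ (equivalently $V=0$), so $f(V/2)>f(0)$ for $V>0$. Combined with positivity of the density, this gives $\mu(V,V) > f(0) > 0$.

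There is no real obstacle beyond identifying the symmetric equilibrium point. The only mild subtlety is that Lemma \ref{lem:db-bdvdouble} requires $V_1<V_2$ strictly, so the equality case genuinely needs the symmetry argument above rather than an appeal to that lemma; everything else reduces to substitution and the elementary fact that a symmetric strictly log-convex density is minimized at the origin.
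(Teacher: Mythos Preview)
Your proof is correct and takes essentially the same approach as the paper: identify $\widetilde V=-V_1$ by symmetry, substitute into the formulas for $P_2$ and $P_3$, and compare. The paper's version is terser (it simply asserts $\widetilde V=-V_1$ and writes $P_2=2f(V_1)+f(0)<2f(V_1/2)+2f(V_1)=P_3$), while you spell out the justification for the equilibrium point and for the final inequality; your remark that Lemma~\ref{lem:db-bdvdouble} does not cover the equal-volume case is a nice observation.
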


\begin{proof}
For $V_1=V_2$, we have $\widetilde{V}=-V_1$. So
$$P_2=2f(V_1)+f(0)<2f\paren{\frac{V_1}{2}}+2f(V_1)=P_3.$$
\end{proof}

\begin{prop}
\label{lem:db-v1fixedv2large}
On $\R$ with symmetric, strictly log-convex, $C^1$ density $f$ such that $(\log f)'$ is unbounded,
given $V_1>0$, we have $\mu < 0$
for large $V_2\geq V_1$ (so the triple interval is better).
\end{prop}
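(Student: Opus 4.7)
The plan is to change to volume coordinates (Lemma~\ref{lem:volcoord}), decompose $P_2 - P_3$ into a convexity--nonnegative part plus a residual, and force that residual to be positive for large $V_2$ by combining the equilibrium condition with $(\log f)'$ being unbounded. In volume coordinates, $f$ is convex and even, and $f'$ is strictly increasing, odd, and (since $(\log f)'$ is unbounded in $x$) unbounded above as $V \to +\infty$.

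First I would parametrize the equilibrium double interval. By Lemma~\ref{lem:db-bdvdouble}, the right endpoint $c := \widetilde V + V_1 + V_2$ lies in $((V_1+V_2)/2, V_2)$. Set $v := V_2 - c \in (0, (V_2-V_1)/2)$, $A := (V_1+V_2)/2$, and $w := (V_2-V_1)/2 - v$, so that $V_1 + v = A - w$ and $V_2 - v = A + w$. Using evenness of $f$ to fold the negative-argument terms,
\[
P_2 - P_3 = \bigl[f(A+w) + f(A-w) - 2 f(A)\bigr] + \bigl[f(v) - 2 f(V_1/2)\bigr],
\]
where the first bracket is $\geq 0$ by convexity of $f$ in $V$.

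Next I show $v \to \infty$ as $V_2 \to \infty$. The equilibrium equation \eqref{eq:equidoublevol}, after applying oddness of $f'$, reads $f'(V_2 - v) = f'(V_1 + v) + f'(v)$. If $v$ stayed bounded by some $K$ along a sequence $V_2 \to \infty$, the right side would remain bounded while $f'(V_2 - v) \geq f'(V_2 - K) \to +\infty$, a contradiction. Since $f' \geq 0$ on $[0, \infty)$ and unbounded, $f(V) = f(0) + \int_0^V f'(t)\,dt \to \infty$, so for $V_2$ sufficiently large we have $f(v) > 2 f(V_1/2)$. Combined with the convexity term being nonnegative, this yields $P_2 - P_3 > 0$, i.e., $\mu < 0$.

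The main obstacle is spotting the change of variables $(v, w)$ that cleanly isolates $f(v) - 2 f(V_1/2)$ as the dominating term; without it, a head-on asymptotic comparison of $P_2$ and $P_3$ (both of which blow up) is awkward because under the sole assumption that $(\log f)'$ is unbounded, very little can be said about the rate at which $v$ grows with $V_2$. Once the decomposition is in place, the unboundedness of $(\log f)'$ is used in two routine ways: to force $v \to \infty$ via equilibrium, and then to make $f(v)$ eventually exceed $2f(V_1/2)$.
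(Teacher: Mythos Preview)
Your proof is correct and follows essentially the same route as the paper: in your notation $v=-(\widetilde V+V_1)$, so your convexity bracket $f(A+w)+f(A-w)-2f(A)\ge 0$ is exactly the paper's inequality $f(-\widetilde V)+f(\widetilde V+V_1+V_2)\ge 2f((V_1+V_2)/2)$, and your residual term $f(v)-2f(V_1/2)$ is the paper's $f(\widetilde V+V_1)-2f(V_1/2)$, which both are made positive by showing $\widetilde V\to-\infty$ (equivalently $v\to\infty$) via the equilibrium condition and the unboundedness of $(\log f)'$. The $(v,w)$ reparametrization is a nice way to expose the structure, but the argument is the same.
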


\begin{proof}
Fix $V_1$. For $V_2$ large, we need to show that
$$P_2 = f(\widetilde{V})+f(\widetilde{V}+V_1)+f(\widetilde{V}+V_1+V_2) >
2\bracket{f\paren{\frac{V_1}{2}}+f\paren{\frac{V_1+V_2}{2}}} = P_3.$$
By convexity of $f$ in volume coordinate (Lemma \ref{lem:volcoord}),
$$f(-\widetilde{V})+f(\widetilde{V}+V_1+V_2)\geq 2f\paren{\frac{V_1+V_2}{2}}.$$
Notice that $f(-\widetilde{V})=f(\widetilde{V})$ by symmetry of $f$.
So we need to show that, for $V_2$ large,
$$f(\widetilde{V}+V_1) > 2f\paren{\frac{V_1}{2}}.$$
It suffices to show that $\widetilde{V}\to -\infty$ as $V_2\to\infty$.

From the equilibrium condition 
$$f'(\widetilde{V})+f'(\widetilde{V}+V_1)+f'(\widetilde{V}+V_1+V_2)=0,$$
as $V_2\to\infty$, if $\widetilde{V}$ does not become very small,
then the leftmost two terms stay bounded, while the rightmost term
goes to infinity because $f'(V)$ is unbounded, which is a contradiction.
Hence $\widetilde{V}\to -\infty$ as $V_2\to\infty$.
\end{proof}

\begin{remark}
Propositions \ref{prop:constantdensity} and \ref{prop:densityeabsx} show that the hypothesis of \emph{strict} log-convexity in Proposition \ref{lem:db-v1fixedv2large} is necessary.
Moreover, the following example shows that the hypothesis that $(\log f)'$ is unbounded is needed.
\end{remark}

\begin{example}
Consider the density in volume coordinate $f(V)=\abs{V}+e^{-\abs{V}}$.
Notice that $f$ is $C^1$ and $f(V)$ is strictly convex, but $f'(V)$ is bounded. For fixed $V_1$, as $V_2 \to \infty$, it can be computed that $\widetilde{V} \to -\log (1+e^{V_1})$. We can then check that
$$\mu \to 2V_1 - \log(1+e^{V_1}) + 2e^{-V_1/2} - 1 > 0$$
for all $V_1 > 0$. Since $\mu$ is decreasing in $V_2$ (Lemma \ref{lem:db-muincdec}), $\mu > 0$ and so the double interval is better for all $V_1$ and $V_2$.
\end{example}

\begin{lemma}
\label{lem:db-v2small}
On $\R$ with symmetric, strictly log-convex, $C^1$ density,
for small $V_2>0$, $\mu > 0$ for all $V_1\leq V_2$ (so the double interval is better).
\end{lemma}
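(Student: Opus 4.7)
The plan is to show that as $V_2\to 0$ (uniformly for $V_1\le V_2$), both $P_2$ and $P_3$ collapse to explicit multiples of $f(0)$, and the difference tends to the strictly positive constant $f(0)$; then continuity upgrades this limit into an honest inequality on some interval $V_2\in(0,V_2^*]$.

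First I would use Lemma \ref{lem:db-bdvdouble} to control the location of $\widetilde V$: the inequality $-(V_1+V_2)/2<\widetilde V<-V_1$ together with $V_1\le V_2$ forces
$$|\widetilde V|,\ |\widetilde V+V_1|,\ |\widetilde V+V_1+V_2|,\ V_1/2,\ (V_1+V_2)/2\ \le\ 2V_2,$$
so all five arguments of $f$ appearing in the formulas for $P_2$ and $P_3$ lie in the compact interval $[-2V_2,2V_2]$, which shrinks to $\{0\}$ as $V_2\to 0$.

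Next I would invoke continuity of $f$ at the origin: given $\varepsilon>0$, there is $V_2^*>0$ such that $|f(v)-f(0)|<\varepsilon$ whenever $|v|\le 2V_2^*$. For any $V_2\in(0,V_2^*]$ and any $V_1\in(0,V_2]$, each term in
$$P_2=f(\widetilde V)+f(\widetilde V+V_1)+f(\widetilde V+V_1+V_2),\qquad P_3=2\bigl[f(V_1/2)+f((V_1+V_2)/2)\bigr]$$
is within $\varepsilon$ of $f(0)$, so $|P_2-3f(0)|<3\varepsilon$ and $|P_3-4f(0)|<4\varepsilon$, giving $|\mu-f(0)|<7\varepsilon$. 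Choosing $\varepsilon=f(0)/8$ (which is legitimate since the density is positive) then yields $\mu>f(0)/8>0$ on the whole strip $\{(V_1,V_2):0<V_1\le V_2\le V_2^*\}$, which is exactly the claim.

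The argument is essentially a limit/continuity computation, so no serious obstacle arises; the only thing to watch is that the bound on $\widetilde V$ is uniform in $V_1\le V_2$, which is precisely what Lemma \ref{lem:db-bdvdouble} delivers. No use of strict log-convexity beyond what is already built into the existence and uniqueness of the equilibrium double interval (needed to define $\widetilde V$ in the first place) is required.
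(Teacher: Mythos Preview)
Your proposal is correct and follows essentially the same approach as the paper's proof: use Lemma~\ref{lem:db-bdvdouble} to see that for small $V_2$ all five arguments of $f$ collapse toward $0$, so $P_2\to 3f(0)$ and $P_3\to 4f(0)$, giving $\mu>0$. Your write-up is simply a more explicit $\varepsilon$--$\delta$ version of the paper's one-paragraph sketch; the only cosmetic point is that Lemma~\ref{lem:db-bdvdouble} is stated for $V_1<V_2$, but the boundary case $V_1=V_2$ (where $\widetilde V=-V_1$) satisfies the same bounds and is in any case covered by Proposition~\ref{lem:db-equalvol}.
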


\begin{proof}
For $V_2$ small, by Lemma \ref{lem:db-bdvdouble}, $\widetilde{V}$
is also small in magnitude.
Hence every density term that contributes to $P_2$ and $P_3$
is close to $f(0)$. Thus for $V_2$ small, $P_2$ is close to $3f(0)$
while $P_3$ is close to $4f(0)$, so that $P_2<P_3$.
\end{proof}

\begin{theorem}
\label{thm:db-1dmain}
On $\R$ with symmetric, strictly log-convex, $C^1$ density $f$ such that $(\log f)'$ is unbounded,
given $V_1>0$, there is a unique $V_2=\lambda(V_1)$
such that the double interval in equilibrium and the triple interval
tie.
For $V_2>\lambda(V_1)$, the perimeter-minimizing
double bubble is uniquely the triple interval.
For $V_2<\lambda(V_1)$, the perimeter-minimizing
double bubble is uniquely the double interval
in equilibrium.
Moreover, $\lambda$ is a strictly increasing $C^1$ function
that tends to a positive limit as $V_1\to 0$.
\end{theorem}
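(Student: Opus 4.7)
The plan is to view $\lambda$ as the level set $\{\mu = 0\}$ and extract all four conclusions from the behavior of $\mu$ established in the preceding lemmas. Fix $V_1 > 0$. By Proposition \ref{lem:db-equalvol}, $\mu(V_1, V_1) > 0$; by Proposition \ref{lem:db-v1fixedv2large}, $\mu(V_1, V_2) < 0$ for sufficiently large $V_2$; and by Lemma \ref{lem:db-muincdec}, $\mu$ is continuous and strictly decreasing in $V_2$ on $[V_1, \infty)$. The intermediate value theorem then yields a unique $\lambda(V_1) > V_1$ with $\mu(V_1, \lambda(V_1)) = 0$. The trichotomy (a)--(c) following the definition of $\mu$, combined with this strict monotonicity, immediately gives the ``triple above, double below'' characterization.

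For $C^1$-regularity and strict monotonicity of $\lambda$, I would apply the implicit function theorem to $\mu = 0$ at $V_2 = \lambda(V_1)$. First one must check that $\mu \in C^1$: the equilibrium volume $\widetilde V(V_1, V_2)$ defined by \eqref{eq:equidoublevol} depends $C^1$-smoothly on $(V_1, V_2)$ because the left-hand side of \eqref{eq:equidoublevol} has strictly positive derivative in $\widetilde V$ (strict convexity of $f$ in volume coordinate, Lemma \ref{lem:volcoord}), while the remaining terms of $\mu$ are direct compositions of $f \in C^1$. At any tie point we have $V_1 < \lambda(V_1)$, so Lemma \ref{lem:db-muincdec} gives $\partial\mu/\partial V_1 > 0$ and $\partial\mu/\partial V_2 < 0$. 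The implicit function theorem then yields
$$\lambda'(V_1) = -\frac{\partial\mu/\partial V_1}{\partial\mu/\partial V_2} > 0,$$
so $\lambda$ is $C^1$ and strictly increasing.

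Finally, for the positive limit as $V_1 \to 0^+$, I use Lemma \ref{lem:db-v2small}: choose $\delta > 0$ so that $\mu(V_1, V_2) > 0$ whenever $0 < V_1 \leq V_2 \leq \delta$. Specializing to $V_2 = \delta$ gives $\mu(V_1, \delta) > 0$ for every $V_1 \in (0, \delta]$, and strict decrease of $\mu$ in $V_2$ forces $\lambda(V_1) > \delta$ on that range. Since $\lambda$ is increasing, $\lim_{V_1 \to 0^+} \lambda(V_1) \geq \delta > 0$. The only technical subtlety in the whole argument is verifying the $C^1$-regularity of $\widetilde V$ (and hence of $\mu$) from $f \in C^1$ and strict convexity in volume coordinate; every other step is a direct consequence of results already in hand.
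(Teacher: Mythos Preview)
Your argument is essentially the paper's: existence and uniqueness of $\lambda(V_1)$ from Propositions~\ref{lem:db-equalvol}, \ref{lem:db-v1fixedv2large} and Lemma~\ref{lem:db-muincdec}; $C^1$-regularity of $\lambda$ from the implicit function theorem applied to $\mu=0$; and the positive limit from Lemma~\ref{lem:db-v2small}. The only cosmetic difference is that you read off strict monotonicity directly from $\lambda'=-(\partial\mu/\partial V_1)/(\partial\mu/\partial V_2)>0$, whereas the paper instead argues by contradiction from the two monotonicity statements in Lemma~\ref{lem:db-muincdec}; both routes use the same sign information on the partials.

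One comment on the technical point you flag. Applying the implicit function theorem to \eqref{eq:equidoublevol} to conclude $\widetilde V\in C^1$ would require $f'$ (in volume coordinate) to be $C^1$, i.e.\ $f\in C^2$, which is not assumed. Fortunately you do not need $\widetilde V$ to be $C^1$: when you differentiate $P_2=f(\widetilde V)+f(\widetilde V+V_1)+f(\widetilde V+V_1+V_2)$, the terms involving $\partial\widetilde V/\partial V_i$ come multiplied by $f'(\widetilde V)+f'(\widetilde V+V_1)+f'(\widetilde V+V_1+V_2)=0$, so they drop out (this is exactly how \eqref{eq:p2'} is obtained). Thus the partials of $\mu$ are the explicit expressions displayed before Lemma~\ref{lem:db-bdvdouble}, which are continuous because $f'$ and $\widetilde V$ are continuous; so $\mu\in C^1$ follows without ever differentiating $\widetilde V$.
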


\begin{proof}
Fix $V_1$. By Lemma \ref{lem:db-muincdec}, $\mu$
is a strictly decreasing function of $V_2$.
By Lemma \ref{lem:db-equalvol}, $\mu > 0$ for
$V_2=V_1$. By Lemma \ref{lem:db-v1fixedv2large},
$\mu < 0$ for large $V_2$.
These together imply that there is a unique $V_2=\lambda(V_1)$
such that $\mu = 0$ and that
$\mu > 0$ for $V_2<\lambda(V_1)$ and
$\mu < 0$ for $V_2>\lambda(V_1)$.
Thus $\lambda$ determines the perimeter-minimizing double bubbles
as in the theorem statement.

Observe that $\mu$ is a $C^1$ function with partial derivative
$\del \mu/\del V_2 < 0$
at points $(V_1,\lambda(V_1))$, by Lemma \ref{lem:db-muincdec}.
So by the implicit function theorem, $\lambda$ is a $C^1$ function.

We now show that
$\lambda$ is strictly increasing.
Suppose not. Then there are $V_1<V_1^*$ with
$\lambda(V_1)\geq\lambda(V_1^*)$.
By Lemma \ref{lem:db-muincdec},
$$0=\mu(V_1,\lambda(V_1))<\mu(V_1^*,\lambda(V_1))\leq
\mu(V_1^*,\lambda(V_1^*))=0,$$
a contradiction. Note the term in the middle makes sense
because $V_1^*\leq \lambda(V_1)$.

Finally we show that $\lambda$ tends to a positive limit
as $V_1\to 0$. By Lemma \ref{lem:db-v2small}, there is $v>0$
such that $\mu > 0$ for all $V_1\leq V_2\leq v$,
so $\lambda(V_1)\neq v$ for all $V_1$.
Thus because $\lambda$ is strictly increasing,
it tends to a limit which is at least as big as $v$
as $V_1\to 0$.
\end{proof}

% old
%It remains an open question what happens in the case where $(\log f)'$ is bounded. We conjecture that the tie function $\lambda$ still exists but only for $V_1 \in (0,V_0)$ for some ``blowup time'' $0\leq V_0 \leq \infty$, and $\lambda \to \infty$ as $V_1 \to V_0$.

It is an interesting question what happens in the case where $(\log f)'$ is bounded. In our follow-up note \cite{So2}, we show that the tie function $\lambda$ still exists but only for $V_1 \in (0,V_0)$ for some ``blowup time'' $0\leq V_0 \leq \infty$, and $\lambda \to \infty$ as $V_1 \to V_0$.

\section{Non-Strictly Log-Convex Densities}
\label{sect:nonstrict}

Section \ref{sect:nonstrict} considers some densities which are symmetric, piecewise $C^1$,
and log-convex, but not strictly log-convex.

We investigate the following densities:
\begin{enumerate}[label = (\roman*)]
\item The constant density $f(x)=c$ (Prop. \ref{prop:constantdensity}).
\item The exponential density $f(x)=e^{\abs{x}}$ (Prop. \ref{prop:densityeabsx}).
\item The smoothed-out exponential density
\begin{equation}
\label{eqn:smoothabsx}
f(x)=
\begin{cases}
e^{x^2}\quad &\text{for }\abs{x}<a\\
e^{a(2\abs{x}-a)} \quad &\text{for }\abs{x}\geq a
\end{cases}
\end{equation}
\noindent with $a > 2 \sqrt{\log 2}$ 
(Prop. \ref{prop:mainsmoothedout}).
\end{enumerate}

\noindent For the constant density, every double interval is perimeter minimizing. For the exponential density, a perimeter-minimizing double bubble is a double interval with the middle point at the origin. For the smoothed-out exponential density, the triple interval appears for $V_1$ small and $V_2$ large.

For non-strictly log-convex densities, there may be a continuum of double intervals (or triple intervals) in equilibrium with the prescribed volumes. Lemma \ref{lem:nonstrictequilibria} shows that all such are perimeter minimizing among double intervals (among triple intervals).

\begin{lemma}
\label{lem:nonstrictequilibria}
On $\R$ with symmetric log-convex density f, among n-bubbles of prescribed volumes and fixed combinatorial type, every equilibrium is perimeter minimizing.
\end{lemma}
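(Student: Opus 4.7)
The plan is to pass to volume coordinates via Lemma \ref{lem:volcoord} so that $f$, viewed as a function of $V = \int_0^x f$, becomes convex. Under this change of variables, an $n$-bubble with boundary points at positional coordinates $x_1 < \dots < x_k$ corresponds to boundary volume-coordinates $V_1 < \dots < V_k$ (with $V_i = \int_0^{x_i} f$), and its perimeter is
$$P = \sum_{i=1}^k f(V_i),$$
each shared boundary counted once. This is the key reduction: once perimeter is expressed in volume coordinates, it becomes a sum of convex functions of the $V_i$.

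Next I would show that, for a fixed combinatorial type, the set of admissible $(V_1,\dots,V_k)$ is a convex polyhedron $F$. The prescribed volume of each bubble is a sum of differences $V_i - V_{i-1}$ over the blocks belonging to that bubble, so the $n$ volume constraints are affine in the $V_i$; combined with the ordering $V_1 \leq \dots \leq V_k$ (and, where relevant, $V_i \to \pm\infty$ for unbounded end blocks), this carves out a convex subset of $\mathbb{R}^k$. Thus $P$ is a convex function on the convex set $F$, since each summand $f(V_i)$ is convex in its variable.

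Now the equilibrium condition from Corollary \ref{prop:1d-equil} (or Remark \ref{rem:onesidedderiv} in the non-$C^1$ case) is exactly the statement that the directional derivative of $P$ vanishes (resp., that the sum of right derivatives is nonnegative and of left derivatives nonpositive) along every variation $dV_i/dt$ that preserves the volume constraints and the combinatorial type. Equivalently, $0$ lies in the subdifferential of $P|_F$ at the equilibrium point. By standard convex analysis, a critical point of a convex function on a convex set is a global minimizer, so any equilibrium is perimeter-minimizing among $n$-bubbles of the fixed combinatorial type and prescribed volumes.

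The only mild subtlety is the non-smooth case: when $f$ is merely piecewise $C^1$, the derivatives $f'(V_i)$ must be replaced by one-sided derivatives, and one must verify that the one-sided equilibrium condition of Remark \ref{rem:onesidedderiv} really corresponds to $0$ being in the subdifferential of $P|_F$. This is routine because $f$ is convex in $V$, so its one-sided derivatives exist everywhere and its subdifferential at $V_i$ is exactly $[f'_L(V_i), f'_R(V_i)]$; the constrained first-order optimality condition then matches the equilibrium condition direction-by-direction.
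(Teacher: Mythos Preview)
Your argument is correct and shares the paper's key reduction: pass to volume coordinates so that $f$ becomes convex and the perimeter $P=\sum_i f(V_i)$ is a convex function on the affine set of admissible endpoint tuples. The final step is packaged differently, however. You invoke convex analysis: equilibrium means $0$ lies in the subdifferential of $P|_F$, and a critical point of a convex function is a global minimizer. The paper instead argues more elementarily: it first observes that a minimizer exists (since $f$, being symmetric, convex, and nonconstant in $V$, goes to infinity), so at least one equilibrium is minimizing, and then shows any two equilibria $B_1,B_2$ have equal perimeter by restricting $P$ to the segment $t\mapsto (1-t)B_1+tB_2$; this one-variable function is convex with $P'_R(0)\ge 0$ and $P'_L(1)\le 0$, hence constant. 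Your route avoids the separate existence step and is a bit slicker; the paper's route avoids the language of subdifferentials and the (mild) verification that the interval-variation conditions of Corollary~\ref{prop:1d-equil} and Remark~\ref{rem:onesidedderiv} really span all volume-preserving directions, since it only needs the one direction $B_2-B_1$.
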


\begin{proof} 
We may assume that $f$ is not constant, since the result is easy in that case.
If we switch to volume coordinate (Lemma \ref{lem:volcoord}), then
$f$ is a convex function of volume $V$. Since $f$ is symmetric, convex, and nonconstant, it goes to infinity in both directions. Hence a minimizer exists. Since every minimizer is in equilibrium, it remains to show that every equilibrium has equal perimeter.

Represent an $n$-bubble as a tuple of volume coordinates of endpoints of its components. Then given two $n$-bubbles in equilibrium $B_1$ and $B_2$, on the straight line between them, the volume of each component varies linearly.
Since the volume of each bubble is equal at $B_1$ and $B_2$, the volume of each bubble must be constant along this straight line. Hence all $n$-bubbles along this straight line have the prescribed volumes. Let $P(t)$ denote the perimeter of the $n$-bubble $(1-t)B_1+tB_2$, $t \in [0,1]$, on this straight line.
Then $P$ is convex because $f$ is convex, and the one-sided derivatives $P'_R(0) \geq 0$ and $P'_L(1) \leq 0$ because $B_1$ and $B_2$ are in equilibria. It follows that $P$ is constant, and so $B_1$ and $B_2$ have equal perimeter.
\end{proof}

\begin{remark}
Lemma \ref{lem:nonstrictequilibria} reduces the search for a perimeter-minimizing double bubble to any double 
interval or triple interval in equilibrium. We can pick a perimeter-minimizing triple interval 
to be the triple interval symmetric about the origin.

The proof shows that the set of (perimeter-minimizing) equilibria is a finite-dimensional cell, convex in the $V$ coordinates.

The hypothesis that $f$ be symmetric, used for the existence of a minimizer, is not necessary. If $f$ approaches but never reaches a limit in either direction, equilibria do not exist (because the derivative of perimeter is negative as you slide the $n$-bubble in that direction), and the result holds trivially. Otherwise minimizers exist.
\end{remark}

\begin{remark}
\label{rem:nonstrictformula}
By Lemma \ref{lem:nonstrictequilibria}, in volume coordinates (Lemma \ref{lem:volcoord}),
the perimeters $P_2$ of double and $P_3$ of triple intervals in equilibrium are still given by
\begin{align*}
P_2 &= f(\widetilde{V})+f(\widetilde{V}+V_1)+f(\widetilde{V}+V_1+V_2), \\
P_3 &= 2\bracket{f\paren{\frac{V_1}{2}}+f\paren{\frac{V_1+V_2}{2}}},
\end{align*}
where $\widetilde{V}$ is any volume satisfying
the equilibrium condition
$$f'(\widetilde{V})+f'(\widetilde{V}+V_1)+f'(\widetilde{V}+V_1+V_2)=0.$$
\end{remark}

We now consider some specific non-strict log-convex densities.

\begin{prop}
\label{prop:constantdensity}
On $\R$ with density $f(x)=c$, any double interval enclosing the prescribed volumes is perimeter minimizing.
\end{prop}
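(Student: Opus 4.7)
The plan is a direct boundary-point count: since the density is constant, the weighted perimeter of any double bubble equals $c$ times its number of boundary points, so showing that every double interval is perimeter minimizing reduces to showing that no double bubble has fewer than three boundary points. Any double interval has exactly three boundary points (the two outer endpoints together with the shared inner one), giving perimeter $3c$; the task is therefore the matching lower bound.

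For the lower bound I would restrict attention to double bubbles of finite perimeter, since an infinite-perimeter candidate is immediately beaten by a double interval. Because $f\equiv c>0$, any such bubble is a finite disjoint union of bounded intervals. Let $m_i\geq 1$ denote the number of components of bubble $i$, and let $s$ denote the number of endpoints shared between a component of bubble $1$ and a component of bubble $2$; components of the same bubble cannot share an endpoint without merging into one component of positive volume. Then the number of distinct boundary points of the double bubble is $2m_1+2m_2-s$, so the perimeter equals $c(2m_1+2m_2-s)$.

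The central inequality is $2m_1+2m_2-s\geq 3$. Since each shared endpoint pairs one component of bubble $1$ with one of bubble $2$, we have $s\leq \min(2m_1,2m_2)$, whence $2m_1+2m_2-s\geq 2\max(m_1,m_2)\geq 2$. Attaining the value $2$ would require $m_1=m_2=1$ and $s=2$, forcing both bubbles to be intervals with identical endpoints, contradicting the disjoint-interior and positive-volume assumptions. Hence $2m_1+2m_2-s\geq 3$, every double bubble has perimeter at least $3c$, and every double interval is perimeter minimizing. The only real subtlety is ruling out the $s=2$ degeneracy, which the disjoint-interior and positive-volume hypotheses dispatch cleanly; everything else is combinatorial bookkeeping.
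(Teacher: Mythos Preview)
Your proof is correct and follows essentially the same idea as the paper: with constant density $c$, perimeter is $c$ times the number of boundary points, a double interval has exactly three, and any competitor has at least as many. The paper's proof is a two-line version of yours---it simply asserts that any configuration other than a double interval has perimeter at least $4c$---whereas you supply the combinatorial bookkeeping ($2m_1+2m_2-s\geq 3$) that justifies the bound.
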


\begin{proof}
A double interval has perimeter $3c$. Any other configuration has perimeter
at least $4c$. Therefore a double interval is perimeter minimizing.
\end{proof}

\begin{prop}
\label{prop:densityeabsx}
On $\R$ with density $f(x)=e^{\abs{x}}$, the perimeter-minimizing double bubble is the double interval with the middle perimeter point at the origin, unique up to reflection across the origin.
\end{prop}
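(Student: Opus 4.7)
The plan is to combine the structural reductions of Section \ref{sect:nbub1d} with an explicit perimeter computation. First I would apply Propositions \ref{prop:1d-contiguous} and \ref{prop:2n-1} to restrict a perimeter-minimizing double bubble for $f(x)=e^{|x|}$ to either a double interval or a three-component (triple-interval-type) configuration. Because $f$ is piecewise $C^1$ and log-convex, Lemma \ref{lem:nonstrictequilibria} ensures that within each combinatorial type every equilibrium achieves the same minimum perimeter, so it suffices to produce one equilibrium representative of each type and compare.

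For a double interval with boundary points $x_1<x_2<x_3$, I would apply the one-sided equilibrium condition of Remark \ref{rem:onesidedderiv}. Since $(\log f)'(x)=\mathrm{sign}(x)$ away from $0$, with left and right derivatives $-1$ and $+1$ at the origin, a short case analysis of
\[\sum_i (\log f)_L'(x_i) \leq 0 \leq \sum_i (\log f)_R'(x_i)\]
shows that these inequalities can hold only when $x_2=0$ and $x_1<0<x_3$: three values drawn from $\{\pm 1\}$ cannot sum to zero, so some derivative must jump at the origin, and $x_1=0$ or $x_3=0$ each fails one of the two inequalities. The volume conditions then give $f(x_1)=V_1+1$ and $f(x_3)=V_2+1$, so $P_2 = V_1+V_2+3$. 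For a three-component configuration I would pick the symmetric triple interval with $V_1$ in the middle (placing the smaller volume in the middle is forced by the sliding argument of Lemma \ref{lem:db-3triple}, which goes through verbatim via Remark \ref{rem:onesidedderiv}); computing $e^{y_1}=1+V_1/2$ and $e^{y_2}=1+(V_1+V_2)/2$ yields $P_3 = 2V_1+V_2+4$.

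The comparison $P_3 - P_2 = V_1+1 > 0$ shows the double interval strictly beats every three-component configuration, so any minimizer must itself be a double interval in equilibrium. Uniqueness up to reflection then follows because the only such equilibria are the ``$V_1$-on-the-left'' and ``$V_2$-on-the-left'' double intervals with middle point at the origin, which differ precisely by reflection across $0$. The main obstacle is the one-sided equilibrium case analysis for the double interval --- I must check carefully that the kink of $\log f$ at the origin is exactly what permits the asymmetry between left and right derivative sums, and systematically rule out every alternative sign pattern for $(x_1,x_2,x_3)$ --- after which every remaining step is routine arithmetic.
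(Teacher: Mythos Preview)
Your proof is correct and follows essentially the same route as the paper's: the paper also pins the middle point of the double interval at the origin via the one-sided equilibrium inequalities (working in the volume coordinate $f(V)=1+|V|$ rather than positionally) and then compares $P_2=V_1+V_2+3$ with $P_3=2V_1+V_2+4$. One small caveat: Lemma~\ref{lem:db-3triple} does not go through \emph{verbatim} here, since for non-strict log-convexity the equilibrium condition no longer forces the three-component configuration to be symmetric; but because you already invoke Lemma~\ref{lem:nonstrictequilibria}, you may legitimately take the symmetric triple interval as your representative of that combinatorial type, and then the final ``$V_1$ in the middle'' comparison of Lemma~\ref{lem:db-3triple} (or a direct check that the $V_2$-in-the-middle type gives $V_1+2V_2+4\ge P_3>P_2$) finishes the job.
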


\begin{proof}
The density in volume coordinate (Lemma \ref{lem:volcoord}) is
$$f(V)=1+\abs{V}.$$
We first consider a double interval in equilibrium.
The function $f$ is $C^1$ everywhere except at the origin with $f'(V)=1$ or $-1$.
So the equilibrium condition (Rem. \ref{rem:nonstrictformula}) cannot be satisfied
unless one boundary point is at the origin. By Remark \ref{rem:onesidedderiv},
the leftmost boundary point cannot be at the origin because the sum of the left derivatives
$$\sum f'_L=1$$
would be positive. Similarly the rightmost boundary point cannot be at the origin.
Hence the middle boundary point is at the origin. So the double interval in equilibrium is
unique up to reflection across the origin.

Now we can compare the perimeters of double and triple intervals in equilibrium:
$$P_2=f(V_1)+f(0)+f(V_2)=V_1+V_2+3<2V_1+V_2+4=P_3.$$
Therefore the perimeter-minimizing double bubble is the double interval in equilibrium.
\end{proof}

We now consider the smoothed-out exponential density \eqref{eqn:smoothabsx}.

\begin{lemma}
\label{lem:smoothf2int}
Consider $\mathbb{R}$ with the smoothed-out exponential density \eqref{eqn:smoothabsx}.
Let $V_1 \leq V_2$ be prescribed volumes. If 
$$V_1 + V_2\leq \int_{0}^a e^{x^2}\,dx,$$
then the perimeter-minimizing double bubbles are the same double and triple intervals as for the Borell density $f(x) = e^{x^2}$ of Proposition \ref{prop:db-2or3}.
\end{lemma}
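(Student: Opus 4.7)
The plan is to show that the hypothesis $V_1 + V_2 \leq \int_0^a e^{x^2}\,dx$ confines any perimeter-minimizing double bubble to $[-a,a]$, where the smoothed-out density agrees with $e^{x^2}$; the problem then reduces to the Borell case treated in Proposition \ref{prop:db-2or3}. I would first use Propositions \ref{prop:1d-contiguous} and \ref{prop:2n-1}, whose hypotheses require only monotonicity and not strict log-convexity, to conclude that the minimizer is either a double interval (2 contiguous components) or a 3-component contiguous configuration.

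For the double-interval case, let $V_0 := \int_0^a e^{x^2}\,dx$ and work in volume coordinates (Lemma \ref{lem:volcoord}) with endpoints $W_1<W_2<W_3$ satisfying $W_3 - W_1 = V_1 + V_2 \leq V_0$. If $W_3 > V_0$, the total-volume bound forces $W_1 > 0$; since $f'$ in volume coordinates equals $(\log f)'(x)$, which is strictly positive on $(0,\infty)$ for both pieces of the smoothed-out density, all three summands of the equilibrium equation from Corollary \ref{prop:1d-equil} would be strictly positive, a contradiction. Symmetrically $W_1 \geq -V_0$, so the double interval lies inside $[-a,a]$, where $f = e^{x^2}$, and the argument of Lemma \ref{lem:db-2double} identifies it with the unique Borell equilibrium.

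For the 3-component case, Lemma \ref{lem:nbubblelessthan3} forces the two outer components to lie on opposite sides of $0$, so the middle component $[b_2,b_3]$ contains $0$. The volume bound gives $\int_{b_2}^{b_3} f \leq V_0$, hence $b_2, b_3 \in [-a,a]$; applying Corollary \ref{prop:1d-equil} to the pair $\{b_2,b_3\}$ (whose external blocks both belong to the outer bubble) yields $2b_2 + 2b_3 = 0$, so $b_3 = -b_2$. The same volume bound applied to $\int_{b_1}^{b_4} f \leq V_0$ places $b_1, b_4$ inside $[-a,a]$, and the remaining equilibrium condition $(\log f)'(b_1) + (\log f)'(b_4) = 0$ then forces $b_4 = -b_1$. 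The swap argument of Lemma \ref{lem:db-3triple} places the smaller bubble in the middle, yielding the Borell triple interval.

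The main obstacle is that Proposition \ref{prop:db-2or3} and its supporting lemmas rely on strict log-convexity, while the smoothed-out density is only linear in the log, hence not strictly log-convex, outside $[-a,a]$. The confinement step bypasses this: once the minimizer is shown to live in $[-a,a]$, the strict log-convexity of $e^{x^2}$ there is all that the Borell-case uniqueness and symmetry arguments require, and they go through verbatim.
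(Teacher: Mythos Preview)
Your argument is correct, but the paper takes a shorter path. Rather than splitting into the double-interval and three-component cases and invoking equilibrium separately in each, the paper observes directly that any perimeter-minimizing contiguous configuration must contain the origin (otherwise slide the whole bubble toward the origin, strictly decreasing perimeter); together with the bound $V_1+V_2\le\int_0^a e^{x^2}\,dx$ this already forces the entire configuration into $[-a,a]$, with no case analysis and no appeal to Corollary~\ref{prop:1d-equil}. Since the smoothed-out density and the Borell density agree on $[-a,a]$, and the Borell minimizers of Proposition~\ref{prop:db-2or3} also lie in $[-a,a]$ for these volumes, the two problems have identical minimizers. Your route has the minor advantage of being self-contained---you re-derive the symmetry and uniqueness inside $[-a,a]$ rather than quoting Proposition~\ref{prop:db-2or3}---but at the cost of repeating the Lemma~\ref{lem:db-2double}/\ref{lem:db-3triple} arguments. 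The paper's sliding observation is worth internalizing: it handles both combinatorial types at once and does not need the $C^1$ equilibrium condition.
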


\begin{proof}
By Proposition \ref{prop:1d-contiguous}, a perimeter-minimizing double bubble consists of contiguous intervals.
Observe that one of these intervals must contain the origin, as otherwise the whole double bubble can be shifted
towards the origin and the perimeter will decrease.
By the upper bound on $V_1+V_2$, the whole bubble is contained in $[-a,a]$.
In this interval, the density is identical to the Borell density.
By Proposition \ref{prop:db-2or3}, a perimeter-minimizing double bubble for the Borell density
also lies in this interval for the prescribed volumes.
So perimeter-minimizing double bubbles for the two densities are identical.
\end{proof}

\begin{lemma}
\label{lem:smoothv1large}
Consider $\mathbb{R}$ with the smoothed-out exponential density \eqref{eqn:smoothabsx}.
Let $V_1 \leq V_2$ be prescribed volumes, with
$$V_1\geq\int_{-a}^a e^{x^2}\,dx.$$
Then the double interval in equilibrium has the middle boundary point at the origin,
and a triple interval in equilibrium has boundary points
$$-y_2 < -y_1\leq - a < 0 < a \leq y_1' < y_2',$$ 
free up to the volume constraints.  
\end{lemma}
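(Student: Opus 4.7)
The plan is to work in the volume coordinate of Lemma \ref{lem:volcoord}, in which $f$ becomes $C^1$ and convex, with derivative
$$f'(V) = \begin{cases} 2x(V) & \text{for } |V|\leq V_a,\\ 2a\,\mathrm{sgn}(V) & \text{for } |V|\geq V_a, \end{cases}$$
where $V_a = \int_0^a e^{x^2}\,dx$, so the hypothesis reads $V_1\geq 2V_a$. The key structural facts are that $f'$ is continuous and nondecreasing, strictly increasing through $0$ on $(-V_a,V_a)$, and constant $\pm 2a$ outside. The ``origin'' in position coordinate corresponds to $V=0$ in volume coordinate.

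For the double interval, I would analyze the equilibrium equation
$$f'(\widetilde{V}) + f'(\widetilde{V}+V_1) + f'(\widetilde{V}+V_1+V_2) = 0$$
by cases on $\widetilde V$. If $\widetilde V>-V_a$, then $V_1\geq 2V_a$ forces both $\widetilde V+V_1$ and $\widetilde V+V_1+V_2$ to exceed $V_a$, so the sum is at least $-2a+4a>0$; symmetrically, $\widetilde V+V_1+V_2<V_a$ forces the sum to be negative. The remaining range $\widetilde V\leq -V_a$ and $\widetilde V+V_1+V_2\geq V_a$ pins the first term to $-2a$ and the third to $2a$, so the middle term must vanish. By strict monotonicity of $f'$ through $0$, this gives $\widetilde V+V_1=0$, i.e.\ the middle boundary point lies at the origin.

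For the triple interval, label the four boundary points $x_1<x_2<x_3<x_4$ and invoke Corollary \ref{prop:1d-equil} to get the two equilibrium relations
$$(\log f)'(x_2)+(\log f)'(x_3)=0, \qquad (\log f)'(x_1)+(\log f)'(x_4)=0.$$
The middle interval $[x_2,x_3]$ encloses $V_1\geq\int_{-a}^a e^{x^2}\,dx$, so it cannot sit inside $[-a,a]$. I would then rule out the two mixed cases (e.g.\ $x_2\in(-a,a)$ with $x_3\geq a$) using the inner equilibrium: strictness of $(\log f)'$ on $(-a,a)$ combined with $(\log f)'(x_3)=2a$ would force $(\log f)'(x_2)=-2a$, contradicting $x_2>-a$. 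Hence $x_2\leq -a$ and $x_3\geq a$. The outer equilibrium together with $x_1<x_2\leq -a$ then yields $x_4\geq a$ by the same argument. In this regime $(\log f)'$ is locally constant at $\pm 2a$, so both equilibrium conditions are satisfied automatically and the four points range freely over $(-\infty,-a]\times(-\infty,-a]\times[a,\infty)\times[a,\infty)$ subject only to the two volume constraints, producing the claimed two-parameter family.

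The main obstacle is purely bookkeeping: one must carefully eliminate the mixed configurations in which only one of $\{x_2,x_3\}$ (or $\{x_1,x_4\}$) exits $[-a,a]$, and this is where continuity of $(\log f)'$ at $\pm a$ together with the strict monotonicity on $(-a,a)$ does all the work. No deeper analytic input is required; the hypothesis $V_1\geq 2V_a$ precisely ensures the boundary points are pushed into the affine region of $f$ in volume coordinate, which is what collapses uniqueness in the double case and produces the free parameters in the triple case.
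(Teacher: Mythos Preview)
Your argument is correct and follows essentially the paper's approach: both exploit that $(\log f)'$ is constant at $\pm 2a$ outside $[-a,a]$ and strictly monotone inside, and use the volume lower bound $V_1\geq\int_{-a}^a e^{x^2}\,dx$ to push the relevant boundary points out of $(-a,a)$. The only cosmetic differences are that the paper works entirely in the positional coordinate (casing on the sign of the middle point for the double interval, and on the full four-point sum for the triple), whereas you pass to the volume coordinate for the double interval and split the triple-interval equilibrium into the separate inner and outer two-point relations.
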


\begin{proof}
Observe that $f$ is $C^1$ and that
$$ (\log f)'(x) = 
\begin{cases}
-2a \quad &\text{for } x \leq - a\\
2x &\text{for } \abs{x} < a\\
2a &\text{for } x \geq a.
\end{cases}$$
By Corollary \ref{prop:1d-equil}, the sum of the derivatives of the log of the density at the boundary points of a double interval in equilibrium must equal zero.
We claim that the middle boundary point is $0$. If the middle boundary point were less than zero, then the sum of the derivatives of the log of the density would be negative. If the middle boundary point were greater than $0$, then the sum of the derivatives of the log of the density would be positive. Therefore the middle boundary point is $0$, as asserted.

For a triple interval in equilibrium, let the boundary points be $-y_2<-y_1<y_1'<y_2'$.
If $y_1'$ is less than $a$, then the sum of the derivatives of the log of the density at the boundary points is negative.  If $-y_1$ is greater than $-a$, then the sum of the derivatives of the log of the density at the boundary points is positive.
Therefore $-y_1\leq -a<a\leq y_1'$,
and the sum of the derivatives of the log of the density equals zero whenever this holds.
So the claim holds.
\end{proof}

\begin{lemma}
\label{lem:smoothv1smallv2large}
Consider $\mathbb{R}$ with the smoothed-out exponential density \eqref{eqn:smoothabsx}. Let $V_1 \leq V_2$ be prescribed volumes, where
$$V_1 < \int_0^{a}e^{x^2}\,dx, \quad V_1+V_2 \geq \int_{-a}^a e^{x^2} \,dx.$$
Then a double interval in equilibrium has perimeter points $x_1,x_2,x_3$, where 
$$-a \leq x_1<x_2 = -a -x_1 \leq 0<a \leq x_3,$$
up to reflection across the origin.
A perimeter-minimizing triple interval has perimeter points
$$-y_2 \leq -a < -y_1 < 0 < y_1 < a \leq y_2',$$  
where $[-y_1,y_1]$ has volume $V_1$ and $y_2$ and $y_2'$ are free up to the volume constraint.
\end{lemma}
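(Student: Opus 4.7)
The plan is to treat the double and triple intervals separately, exploiting the piecewise-linear structure of $(\log f)'$---which equals $-2a$ on $(-\infty, -a]$, $2x$ on $(-a, a)$, and $2a$ on $[a, \infty)$---and invoking Corollary \ref{prop:1d-equil} and Lemma \ref{lem:nonstrictequilibria}. The two volume hypotheses play complementary roles: $V_1 + V_2 \geq \int_{-a}^a e^{x^2}\,dx$ forces the outermost boundary points to leave $[-a, a]$, while $V_1 < \int_0^a e^{x^2}\,dx$ pins the ``$V_1$-interval'' inside $[-a, a]$.

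For the double interval $(x_1, x_2, x_3)$ in equilibrium, the total-volume hypothesis implies $[x_1, x_3] \not\subset [-a, a]$, so after reflecting if necessary I would take $x_3 \geq a$, whence $(\log f)'(x_3) = 2a$ and the equilibrium relation reduces to $(\log f)'(x_1) + (\log f)'(x_2) = -2a$. A short case analysis then rules out the alternatives: if $x_2 \geq a$ the condition forces $x_1 = -2a$, impossible; and if $x_1 \leq -a$ (so that $(\log f)'(x_1) = -2a$), equilibrium forces $x_2 = 0$, whereupon the $V_1$-interval contains $[-a, 0]$ or $[0, a]$ and hence has volume at least $\int_0^a e^{x^2}\,dx$, contradicting the hypothesis $V_1 < \int_0^a e^{x^2}\,dx$ regardless of which of the two intervals we label as the $V_1$-one. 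What remains is $x_1, x_2 \in [-a, a]$, where equilibrium collapses to $x_2 = -a - x_1$; the chain $-a \leq x_1 < x_2 \leq 0$ then follows from $x_1 < x_2$ and $x_2 \leq 0$.

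For the triple interval, with boundary points $p_1 < p_2 < p_3 < p_4$, the first step is to apply Corollary \ref{prop:1d-equil} to the middle block (the blocks immediately outside $p_2$ and $p_3$ both belong to the $V_2$-bubble), yielding $(\log f)'(p_2) + (\log f)'(p_3) = 0$. If either $p_2 \leq -a$ or $p_3 \geq a$, the middle interval would have volume at least $\int_{-a}^a e^{x^2}\,dx > V_1$, a contradiction; hence $p_2, p_3 \in (-a, a)$ with $p_2 + p_3 = 0$, giving $p_2 = -y_1$, $p_3 = y_1$, and the middle-volume constraint pins down $y_1 < a$. Subtracting the middle-block equilibrium from the full four-point equilibrium yields $(\log f)'(p_1) + (\log f)'(p_4) = 0$, which, since $(\log f)'$ is the constant $\pm 2a$ outside $[-a, a]$, is automatic as soon as $p_1 \leq -a$ and $p_4 \geq a$.

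The final step is to identify the perimeter-minimizers and justify the ``free'' clause. Passing to volume coordinates via Lemma \ref{lem:volcoord}, the density is linear with slopes $\pm 2a$ outside $[-V_0, V_0]$ where $V_0 = \int_0^a e^{x^2}\,dx$, so once both outer endpoints are outside this range, the perimeter contribution $f(-y_2) + f(y_2')$ rewrites as $2a\,V_2$ plus a constant depending only on $y_1$ and $a$---independent of how $V_2$ is split between the two outer intervals. Thus every such configuration ties in perimeter (a special case of Lemma \ref{lem:nonstrictequilibria}), explaining the freedom of $y_2$ and $y_2'$. To exclude configurations with an outer boundary strictly inside $[-a, a]$, I would note that if both $y_2, y_2' < a$, the outer volumes sum to at most $2\int_{y_1}^a e^{x^2}\,dx = \int_{-a}^a e^{x^2}\,dx - V_1 \leq V_2$, with equality only in the degenerate case $y_2 = y_2' = a$ (consistent with the non-strict inequalities in the statement); and a single outer boundary strictly inside $[-a, a]$ is incompatible with the equilibrium $(\log f)'(p_1) + (\log f)'(p_4) = 0$ when the other lies outside, because the two derivatives are then linear versus constant. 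The main obstacle I anticipate is the bookkeeping of sub-cases---ensuring that each exclusion invokes only the appropriate volume hypothesis, and cleanly handling the boundary equality $V_1 + V_2 = \int_{-a}^a e^{x^2}\,dx$.
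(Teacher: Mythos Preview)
Your approach mirrors the paper's closely---exploit the piecewise-linear form of $(\log f)'$, invoke Corollary~\ref{prop:1d-equil}, and do case analysis---and your double-interval argument is essentially equivalent to the paper's (modulo the minor slip that ``$x_2\geq a$ forces $x_1=-2a$'' should read ``forces $(\log f)'(x_1)=-4a$, which is outside the range $[-2a,2a]$'').

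There is, however, one genuine gap in the triple-interval part. You begin by asserting that the blocks flanking $[p_2,p_3]$ ``both belong to the $V_2$-bubble,'' and then derive a contradiction from ``volume at least $\int_{-a}^a e^{x^2}\,dx > V_1$.'' Both of these presuppose that the middle interval carries volume $V_1$. But this is precisely part of the lemma's conclusion (``where $[-y_1,y_1]$ has volume $V_1$''), not a hypothesis, and it does \emph{not} follow from equilibrium alone: a three-component configuration with the $V_2$-bubble in the middle can equally well satisfy $(\log f)'(p_2)+(\log f)'(p_3)=0$. The paper closes this gap with a short perimeter comparison: since the outer endpoints lie outside $[-a,a]$ and their combined perimeter depends only on $V_1+V_2$, one need only compare the inner contribution $f(-y_1)+f(y_1')$, and placing the larger volume $V_2$ in the middle forces larger $|y_1|,|y_1'|$ and hence strictly larger perimeter. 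You should insert this step before invoking the bound $V_1<\int_0^a e^{x^2}\,dx$ on the middle volume.
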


\begin{proof}
Let a double interval in equilibrium have boundary points $x_1 < x_2 < x_3$,
and assume that the left interval has volume $V_1$.
By Corollary \ref{prop:1d-equil},
\begin{equation}
\label{eq:smoothv1smallv2largedouble}
(\log f)'(x_1)+(\log f)'(x_2)+(\log f)'(x_3)=0.
\end{equation}
If $x_1 < -a$, then $x_2<0$, and so \eqref{eq:smoothv1smallv2largedouble}
implies that $(\log f)'(x_3)>2a$, which is impossible. Hence $x_1 \geq -a$.
Because of the bound on $V_1+V_2$, $x_3\geq a$. 
Now by \eqref{eq:smoothv1smallv2largedouble}, $(\log f)'(x_2)\leq 0$, so $x_2 \leq 0$. 
So \eqref{eq:smoothv1smallv2largedouble} reduces to $x_1 + x_2 + a = 0,$ or $x_2 = -a-x_1$. 
Then the inequality stated in the proposition holds.
The resulting double interval is unique because there is only one $x_1$ such that $[x_1,-a-x_1]$ has volume $V_1$,
and $x_3$ is determined from $x_1$.

For a perimeter-minimizing triple interval,
denote the boundary points by
$-y_2< -y_1 < y_1'<y_2'$.
By Corollary \ref{prop:1d-equil},
\begin{align}
(\log f)'(-y_1)+(\log f)'(y_1')&=0 \label{eqn:55midinterval}\\
(\log f)'(-y_2)+(\log f)'(y_2')&=0, \nonumber
\end{align}
so either $-y_2,y_2'\in(-a,a)$ with $y_2=y_2'$ or
$-y_2\leq -a$ and $y_2'\geq a$.
By the bound on $V_1+V_2$, the latter must be the case.
Similarly,
either $-y_1,y_1'\in(-a,a)$ with $y_1=y_1'$ or
$-y_1\leq -a$ and $y_1'\geq a$. 
To determine which is the case, we must first determine whether $[-y_1,y_1']$ encloses volume $V_1$ or $V_2$.

We claim that $[-y_1,y_1']$ must enclose volume $V_1$.
By the volume restriction on $V_1$ and (\ref{eqn:55midinterval}), if $[-y_1,y_1']$ has volume $V_1$, then $-y_1,y_1'\in(-a,a)$ with $y_1=y_1'$. If $[-y_1,y_1']$ encloses volume $V_2$, 
then the magnitudes of $y_1$ and $y_1'$---and hence the perimeter $f(-y_1)+f(y_1')$---will be greater than 
when $[-y_1,y_1']$ encloses volume $V_1$,
so in order for the triple interval to be perimeter minimizing,
$[-y_1,y_1']$ must enclose volume $V_1$.
Then $y_1'=y_1\in(0,a)$, and
therefore the boundary points satisfy the inequality
in the statement of the proposition.
\end{proof}

The following proposition is our most interesting example, which shows that the appearance of the triple interval can depend on $V_1$. The particular value $2\sqrt{\log 2}$ is just for our convenience.

\begin{prop}
\label{prop:mainsmoothedout}
Consider $\mathbb{R}$ with the smoothed-out exponential density \eqref{eqn:smoothabsx}: 
$$f(x)=
\begin{cases}
e^{x^2}\quad &\text{for }\abs{x} < a\\
e^{a(2\abs{x}- a)} \quad &\text{for }\abs{x}\geq a
\end{cases}
$$
with $a > 2 \sqrt{\log 2}$. Let $V_1 \leq V_2$ be prescribed volumes. If 
\begin{equation}
\label{eqn:v1geqhalf}
V_1 \geq\int_{-a}^a e^{x^2}\,dx,
\end{equation} 
then the perimeter-minimizing double bubble is the double interval in equilibrium for all $V_2$ (up to reflection). 
If $V_1$ is small, then for $V_2$ close to $V_1$ the perimeter-minimizing double bubble is the double interval in equilibrium
and for $V_2$ large a perimeter-minimizing double bubble is a triple interval.
\end{prop}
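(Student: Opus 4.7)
The plan is to handle the three situations dictated by the proposition separately, using in each case the appropriate structural lemma to pin down the equilibria, and then computing perimeters in volume coordinate, where the density is piecewise affine with slope $\pm 2a$ outside $[-V_a, V_a]$ (set $V_a := \int_0^a e^{t^2}\,dt$). By Lemma \ref{lem:nonstrictequilibria} any equilibrium in a fixed combinatorial type has the same perimeter, so convenient representatives suffice; I take the symmetric triple interval throughout. For the first claim ($V_1 \geq 2V_a$), Lemma \ref{lem:smoothv1large} forces the middle point of the double interval to the origin and the inner triple boundaries outside $[-a,a]$. All endpoints except $0$ then lie in the affine regime, so direct substitution yields $P_2 = 2e^{a^2} + 1 + 2a(V_1 + V_2 - 2V_a)$ and $P_3 = 4e^{a^2} + 2a(2V_1 + V_2 - 4V_a)$, hence $P_3 - P_2 = 2e^{a^2} - 1 + 2a(V_1 - 2V_a) > 0$, so the double interval wins.

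For the first part of the second claim, taking $V_1, V_2$ small enough that $V_1 + V_2 \leq V_a$ allows Lemma \ref{lem:smoothf2int} to reduce the problem to the Borell density $e^{x^2}$. Proposition \ref{lem:db-equalvol} gives $\mu > 0$ for Borell at $V_1 = V_2$, and continuity of $\mu$ from Lemma \ref{lem:db-muincdec} extends this to a $V_2$-neighborhood, so the double interval wins.

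For the second part ($V_1$ small and $V_2$ large enough that $V_1 + V_2 \geq 2V_a$), Lemma \ref{lem:smoothv1smallv2large} gives the equilibria: the double has $x_2 = -a - x_1$ and $x_3 \geq a$, and the symmetric triple has $y_1 \in (0, a)$ and $y_2 \geq a$. Solving the outer volume constraints for $f(x_3)$ and $f(y_2)$ using $f(t) = e^{a(2t-a)}$ for $t \geq a$, the $V_2$-dependence cancels:
\[ f(x_3) - 2f(y_2) \;=\; 2a(V_a + \tilde V) - e^{a^2}, \]
where $\tilde V$ is the volume coordinate of $x_1$. Hence $P_2 - P_3$ depends only on $V_1$, and letting $V_1 \to 0$ (which sends $x_1, x_2 \to -a/2$, $y_1 \to 0$, and $\tilde V \to -V_{a/2}$ with $V_{a/2} := \int_0^{a/2} e^{t^2}\,dt$) yields the limit
\[ \Delta \;=\; 2e^{a^2/4} - 2 + 2a(V_a - V_{a/2}) - e^{a^2}. \]

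The main obstacle is to show $\Delta > 0$ precisely under the hypothesis $a > 2\sqrt{\log 2}$, i.e., $e^{a^2/4} > 2$. Integration by parts via $\int e^{t^2}\,dt = e^{t^2}/(2t) + \int e^{t^2}/(2t^2)\,dt$ gives
\[ V_a - V_{a/2} \;=\; \frac{e^{a^2} - 2e^{a^2/4}}{2a} + \int_{a/2}^a \frac{e^{t^2}}{2t^2}\,dt, \]
and bounding the remainder using $e^{t^2} \geq e^{a^2/4}$ together with $\int_{a/2}^a t^{-2}\,dt = 1/a$ gives $2a(V_a - V_{a/2}) \geq e^{a^2} - e^{a^2/4}$. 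Substituting yields $\Delta \geq e^{a^2/4} - 2 > 0$, so by continuity $P_2 - P_3 > 0$ for all sufficiently small $V_1 > 0$ once $V_2$ is large enough to place us in the regime of Lemma \ref{lem:smoothv1smallv2large}, showing that a triple interval wins.
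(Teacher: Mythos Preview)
Your proof is correct, and Case~2 is handled essentially as in the paper. For Cases~1 and~3, however, you take a more computational route than the paper does, and the comparison is worth making.

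For Case~1 ($V_1 \geq 2V_a$), the paper avoids any explicit perimeter computation: since the triple's outer points $-y_2, y_2'$ are free subject only to the total volume constraint (Lemma~\ref{lem:smoothv1large}), one may simply choose $-y_2 = x_1$ and $y_2' = x_3$, so the outer perimeters coincide exactly, and the comparison reduces to the trivial observation $f(-y_1) + f(y_1') > f(0) = 1$. Your volume-coordinate computation reaches the same conclusion but with more bookkeeping. Note also that your argument only compares against the triple with $V_1$ in the middle; strictly speaking you should observe that the configuration with $V_2$ in the middle has perimeter at least $P_3$ (immediate from your formula, since the affine part contributes an extra $2a(V_2 - V_1) \geq 0$). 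The paper's inner-point argument handles both types at once.

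For Case~3 ($V_1$ small, $V_2$ large), the paper again bypasses explicit outer-perimeter computation by noting that $[-y_2, y_2']$ is an \emph{equilibrium} single bubble of volume $V_1 + V_2$ while $[x_1, x_3]$ is merely some interval of the same volume; hence $f(-y_2) + f(y_2') \leq f(x_1) + f(x_3)$ by Lemma~\ref{lem:nonstrictequilibria}. The inequality $f(-a/2) = e^{a^2/4} > 2 = 2f(0)$ then finishes the inner comparison in one line. Your approach instead computes $f(x_3) - 2f(y_2)$ explicitly, discovers that $P_2 - P_3$ is independent of $V_2$ in this regime, and then needs the integration-by-parts estimate on $V_a - V_{a/2}$ to recover precisely the same inequality $e^{a^2/4} > 2$. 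The $V_2$-independence is a nice structural observation not made explicit in the paper, but the paper's single-bubble-optimality trick gets to the key inequality far more directly.
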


\noindent See Figure \ref{fig:smoothorange}.

\begin{figure}[!ht]
\includegraphics[width=0.7\textwidth]{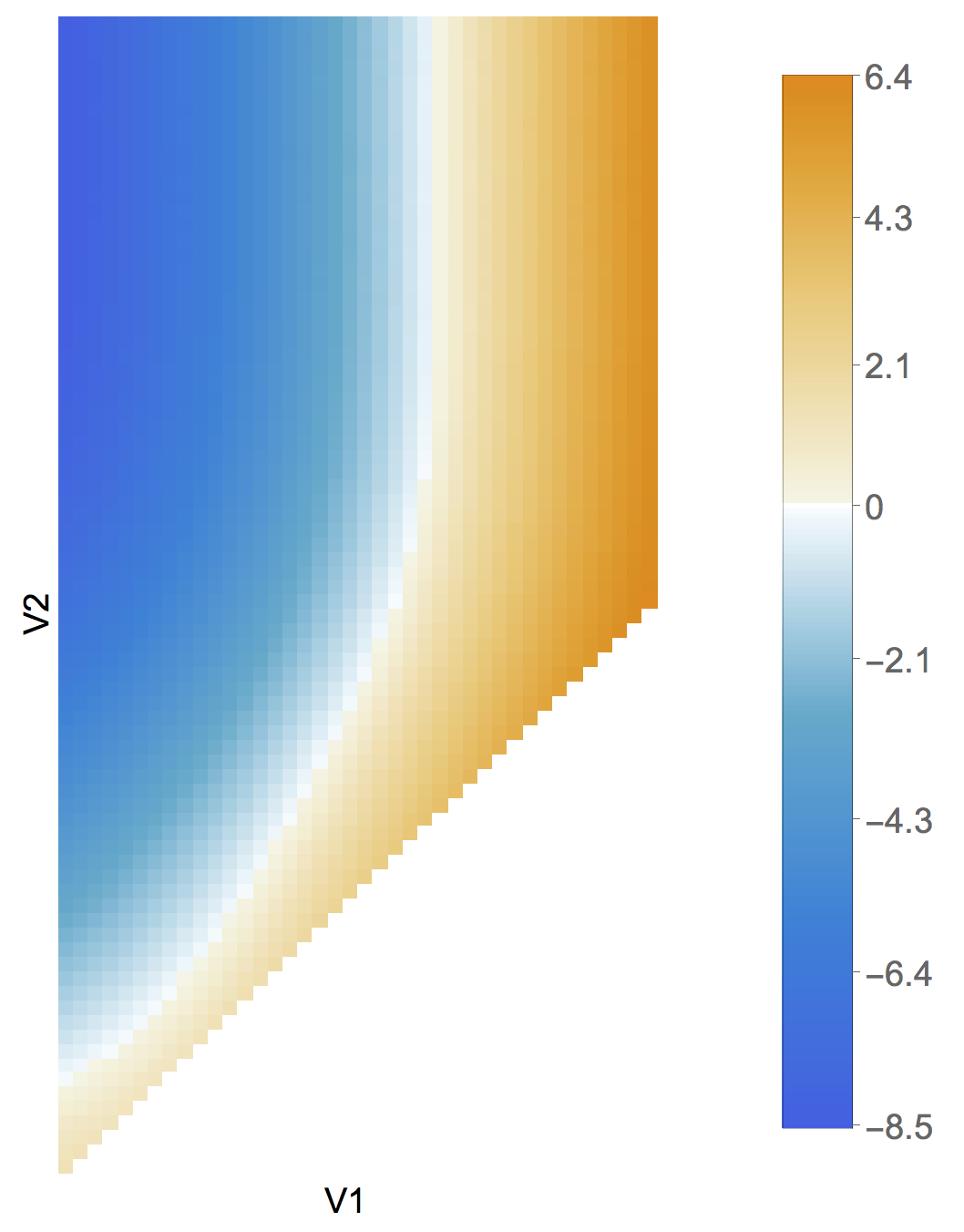}
\caption{The figure above is a numerical computation which represents the value of the perimeter difference $\mu(V_1, V_2)$ for the smoothed out exponential density \eqref{eqn:smoothabsx}. The orange color marks the region in which the double interval has less perimeter, the blue color represents the region in which the triple interval has less perimeter, and the white curve marks the tie point between the double and triple intervals. The plot indicates that the tie curve asymptotes to fixed value of $V_1$. Computed in Mathematica.}
\label{fig:smoothorange}
\end{figure}

\begin{proof}
By Proposition \ref{prop:1d-contiguous}, a perimeter-minimizing double bubble consists of finitely many contiguous intervals. By Proposition \ref{prop:2n-1}, a perimeter-minimizing double bubble consists of two or three such intervals.

First consider the case when $V_1$ is small $V_2$ is close to $V_1$. We assume that 
$$V_1+V_2 \leq \int_0^a e^{x^2}\,dx.$$
By Lemma \ref{lem:smoothf2int},
a perimeter-minimizing double bubble is identical to
the one for the Borell density $f(x)=e^{x^2}$ with the same prescribed volumes.
By Lemma \ref{lem:db-equalvol}, it is the double interval in equilibrium for $V_2$ close to $V_1$.

Now suppose (\ref{eqn:v1geqhalf}) holds.  
By Lemma \ref{lem:smoothv1large}, the boundary points of the double interval in equilibrium are $x_1,0,x_3$, where $x_1\leq-a$ and $x_3 \geq a$,
and the boundary points of a triple interval in equilibrium are $-y_2,-y_1,y_1',y_2'$, where we may choose $-y_2=x_1$ and hence $y_2' = x_3$.  Thus we only need to compare the inner boundary points.
Because $f(-y_1)+f(y_1') > f(0) = 1$, a double interval in equilibrium has less perimeter than a triple interval in equilibrium.
Therefore the perimeter-minimizing double bubble is the double interval in equilibrium for all $V_2$.

Finally, suppose that $V_1$ is small and $V_2$ is large.
By Lemma \ref{lem:smoothv1smallv2large}, a double interval in equilibrium has boundary points $x_1,x_2,x_3$, where $x_2$ is close to $-a/2$,
and a perimeter-minimizing triple interval has boundary points $-y_2,-y_1,y_1,y_2'$, where $y_1$ is close to zero, $-y_2\leq -a$, and $y_2'\geq a$.
Observe that the single bubbles $[-y_2,y_2']$ and $[x_1,x_3]$
have volume $V_1+V_2$ and that by Corollary \ref{prop:1d-equil}
the first single bubble is in equilibrium.
Thus the perimeter of the outer boundary points of a perimeter-minimizing triple interval is less than or equal to the perimeter of the outer boundary points of a double interval in equilibrium.  So it remains to examine the perimeter from the inner boundary points. Observe that 
$$f\paren{-\frac{a}{2}} = e^{{a^2}/4}>e^{(2\sqrt{\log2})^2/4} = 2
=2f(0),$$
since $a>2\sqrt{\log 2}$.
Because $x_2$ is close to $-a/2$ and $y_1$ is close to 0,
the perimeter from the inner boundary points of a perimeter-minimizing triple interval is less than the perimeter from the inner boundary points of a double interval in equilibrium. Then the total perimeter for a perimeter-minimizing triple interval is less than the total perimeter for a double interval in equilibrium. Therefore for $V_1$ small and $V_2$ large, a perimeter-minimizing double bubble is a triple interval.
\end{proof}

\begin{conj}
Consider $\mathbb{R}$ with the smoothed-out exponential density \eqref{eqn:smoothabsx}. Let $V_1 \leq V_2$ be prescribed volumes.
Then there exists $V_0>0$ such that for $V_1\geq V_0$,
a perimeter-minimizing double bubble is always a double interval.
For $0<V_1<V_0$, there is a unique $V_2=\lambda(V_1)$ such that
double intervals and triple intervals tie.
For $V_2>\lambda(V_1)$, a perimeter-minimizing double bubble
is a triple interval.
For $V_2<\lambda(V_1)$, a perimeter-minimizing double bubble
is a double interval.
Moreover, $\lambda$ is a strictly increasing $C^1$ function
that tends to a positive limit as $V_1\to 0$
and tends to infinity as $V_1\to V_0$.
\end{conj}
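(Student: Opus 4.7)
The plan is to exploit the fact that the smoothed-out density agrees with the Borell density $e^{x^2}$ on $[-a,a]$ and is affine in volume coordinate outside, thereby reducing the problem to Theorem \ref{thm:db-1dmain} applied to the Borell density. I would work with the perimeter difference $\mu(V_1,V_2)=P_3-P_2$; by Lemma \ref{lem:nonstrictequilibria} and Rem.\ \ref{rem:nonstrictformula} this is unambiguously defined and continuous in $(V_1,V_2)$ even though the equilibrium volume $\widetilde V$ may not be unique outside the strict region.

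The first step is a regime decomposition. Setting $V_a:=\int_0^a e^{t^2}\,dt$, Lemma \ref{lem:smoothf2int} handles $V_1+V_2\leq 2V_a$: the entire minimizer lies in $[-a,a]$ and $\mu$ coincides with the Borell perimeter difference $\mu_{\mathrm{Borell}}$. For $V_1+V_2\geq 2V_a$, Lemmas \ref{lem:smoothv1large} and \ref{lem:smoothv1smallv2large} describe the equilibrium configurations explicitly; substituting these into $P_2$ and $P_3$ and exploiting that $f$ is affine in volume coordinate with slope $2a$ outside $[-V_a,V_a]$, the explicit $V_2$-dependences in $P_2$ and $P_3$ cancel, so $\mu(V_1,V_2)$ reduces to a function $L(V_1)$ of $V_1$ alone. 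Continuity across the regime boundary gives $L(V_1)=\mu_{\mathrm{Borell}}(V_1,2V_a-V_1)$.

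I would then define $V_0$ as the unique $V_1\in(0,V_a)$ satisfying $\lambda_{\mathrm{Borell}}(V_0)=2V_a-V_0$, where $\lambda_{\mathrm{Borell}}$ is the Borell tie curve provided by Theorem \ref{thm:db-1dmain}. Existence and uniqueness follow from monotonicity: $\lambda_{\mathrm{Borell}}$ is strictly increasing while $2V_a-V_1$ is strictly decreasing; $\lambda_{\mathrm{Borell}}(V_1)<2V_a-V_1$ for $V_1$ small (giving $L(V_1)<0$, consistent with Prop.\ \ref{prop:mainsmoothedout}); and $\lambda_{\mathrm{Borell}}(V_1)>2V_a-V_1$ for $V_1$ near $V_a$ (giving $L(V_1)>0$). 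For $V_1\in(0,V_0)$, $\mu$ is strictly decreasing on the Borell regime with $\mu(V_1,2V_a-V_1)=L(V_1)<0$; combined with Lemma \ref{lem:db-equalvol}, the intermediate value theorem yields a unique tie $\lambda(V_1)=\lambda_{\mathrm{Borell}}(V_1)\in(V_1,2V_a-V_1)$. The $C^1$-regularity, strict monotonicity, and positive limit as $V_1\to 0$ are inherited directly from $\lambda_{\mathrm{Borell}}$.

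The main obstacle is the asserted blowup $\lambda(V_1)\to\infty$ as $V_1\to V_0^-$. Since the above identifies $\lambda(V_1)=\lambda_{\mathrm{Borell}}(V_1)$ on $(0,V_0)$ and $\lambda_{\mathrm{Borell}}$ is continuous, the approach actually produces a \emph{finite} limit $\lambda(V_1)\to\lambda_{\mathrm{Borell}}(V_0)=2V_a-V_0<\infty$, because $\mu$ becomes constant in $V_2$ beyond the Borell regime. Thus the strategy naturally establishes a continuous extension of $\lambda$ to $V_0$ with finite limit rather than a blowup; reconciling this with the apparent vertical asymptote in Figure \ref{fig:smoothorange}---for instance by identifying additional equilibria permitted by the non-strict log-convexity of $f$, or by revising the conjecture to assert only a finite continuous extension of $\lambda$ to $V_0$---is the central remaining subtlety.
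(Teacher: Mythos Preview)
The statement is a \emph{conjecture}; the paper offers no proof, only the partial results of Proposition~\ref{prop:mainsmoothedout} and the numerics of Figure~\ref{fig:smoothorange}. So there is no proof in the paper to compare yours against. Your approach is nevertheless worth assessing on its own terms, and it contains both a real insight and a real gap.

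Your key observation---that in volume coordinate $f$ is affine with slope $2a$ outside $[-V_a,V_a]$, so once both the triple's outer point and the double's rightmost point lie beyond $V_a$ the $V_2$-dependence cancels and $\partial\mu/\partial V_2=0$---is correct and is exactly what drives the picture. It also resolves your own puzzle: at $V_1=V_0$ one has $\mu(V_0,V_2)=L(V_0)=0$ for \emph{every} sufficiently large $V_2$, so the zero set of $\mu$ is the graph of $\lambda$ on $(0,V_0)$ together with a vertical ray $\{V_0\}\times[\,\cdot\,,\infty)$. That ray is the ``asymptote'' visible in Figure~\ref{fig:smoothorange}. Hence your analysis is consistent with the figure and shows that the clause ``$\lambda\to\infty$ as $V_1\to V_0$'' in the conjecture is misstated: $\lambda$ has a finite limit, and the vertical piece of the tie set is a separate ray at $V_1=V_0$, not a blowup of $\lambda$.

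The gap is in your regime decomposition. You claim that for $V_1+V_2\le 2V_a$ the minimizer---hence $\mu$---coincides with Borell, invoking Lemma~\ref{lem:smoothf2int}. But that lemma only gives this for $V_1+V_2\le V_a$, and the stronger statement is false: by Lemma~\ref{lem:db-bdvdouble}, $\widetilde V+V_1+V_2>(V_1+V_2)/2$, so the double interval's rightmost point exits $[-a,a]$ \emph{before} the triple's does as $V_2$ grows. There is therefore an intermediate regime in which the double interval is already governed by the affine part of $f$ while the triple is still Borell; in that regime $\mu\neq\mu_{\mathrm{Borell}}$, and in particular your identities $L(V_1)=\mu_{\mathrm{Borell}}(V_1,2V_a-V_1)$ and $\lambda=\lambda_{\mathrm{Borell}}$ on $(0,V_0)$ are not justified. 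The qualitative conclusions survive (one checks directly that $\partial\mu/\partial V_2<0$ in the intermediate regime and $\partial\mu/\partial V_1>0$ throughout, so $L$ is strictly increasing and a unique $V_0$ exists), but the clean reduction to $\lambda_{\mathrm{Borell}}$ does not.
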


\section{Bounds on the Tie Points}
\label{sect:bounds}
We put some bounds on the growth of the function $V_2=\lambda(V_1)$,
where the double and triple intervals tie,
as defined in Theorem \ref{thm:db-1dmain}.
From this point onwards we assume that $f$ is a symmetric, strictly log-convex, $C^1$ density such that $(\log f)'$ is unbounded.

One of the main results of this section is that $\lambda(V_1)/V_1\to\infty$
as $V_1\to\infty$ for densities $f=e^\psi$ such that $\psi_{xx}/\psi_x$ is bounded
for $x$ large (Cor. \ref{cor:lbpositional}).
As in Remark \ref{rem:tieintersecttwice},
this implies that a line through the origin can intersect
the tie curve $(V_1,\lambda(V_1))$ more than once.
For the Borell density, this is illustrated in Figure \ref{fig:tieintersection} of the Introduction.

\begin{lemma}
For prescribed volumes $V_1 \leq V_2$, let $(x_1,x_2,x_3)$ be the double interval in equilibrium (with the left interval enclosing volume $V_1$)
and $(y_1,y_2)$ be the triple interval.
Fix $V_1$. As $V_2$ increases,
$y_2$ and $x_3$ increase, while
$x_1$ and $x_2$ decrease.
\end{lemma}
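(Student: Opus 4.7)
The plan has two parts. For the triple interval $(y_1,y_2)$, the middle interval $[-y_1,y_1]$ has prescribed volume $V_1$, so $y_1$ is determined by $V_1$ alone and does not depend on $V_2$. The outer interval $[y_1,y_2]$ has volume $V_2/2$, and since $f>0$, $y_2$ is a strictly increasing function of $V_2$. This handles $y_2$.

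For the double interval in equilibrium $(x_1,x_2,x_3)$, I would switch to volume coordinates via Lemma \ref{lem:volcoord}, setting $W_i=\int_0^{x_i}f$. Then $W_2=W_1+V_1$ and $W_3=W_1+V_1+V_2$, and the equilibrium condition becomes
$$F(W_1,V_2):=f'(W_1)+f'(W_1+V_1)+f'(W_1+V_1+V_2)=0,$$
where $f'$ denotes the derivative with respect to the volume coordinate, equal to $(\log f)'(x)$. Strict log-convexity of $f$ makes $f'$ strictly increasing in $V$, so $F$ is strictly increasing in each of its two arguments separately.

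The crux is then a direct monotonicity comparison: suppose $V_2^{(1)}<V_2^{(2)}$ with corresponding equilibrium values $W_1^{(1)}$ and $W_1^{(2)}$. If we had $W_1^{(1)}\leq W_1^{(2)}$, then
$$0=F(W_1^{(1)},V_2^{(1)})\leq F(W_1^{(2)},V_2^{(1)})<F(W_1^{(2)},V_2^{(2)})=0,$$
a contradiction. Hence $W_1$ is strictly decreasing in $V_2$, and since $x\mapsto W(x)$ is strictly increasing, $x_1$ is strictly decreasing; the same holds for $x_2$ via $W_2=W_1+V_1$.

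For $x_3$, I rearrange the equilibrium condition as $f'(W_3)=-f'(W_1)-f'(W_1+V_1)$. As $V_2$ increases, $W_1$ and $W_1+V_1$ decrease, so the right-hand side strictly increases by strict monotonicity of $f'$, forcing $W_3$ and hence $x_3$ to increase. The only mild subtlety is to avoid appealing to the implicit function theorem (which would require $f$ to be $C^2$ in volume coordinate, i.e., more regularity than assumed): the monotonicity comparison above uses only the strict convexity of $f$ in volume coordinate that is equivalent to the standing hypotheses.
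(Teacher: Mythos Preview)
Your proof is correct and follows essentially the same approach as the paper: both use the equilibrium condition together with the strict monotonicity of $(\log f)'$ (equivalently, of $f'$ in volume coordinate) to force $x_1,x_2$ down and then $x_3$ up. Your version is simply a more formal rendering---working in volume coordinates and replacing the paper's heuristic ``shift left'' argument with an explicit comparison $F(W_1^{(1)},V_2^{(1)})\le F(W_1^{(2)},V_2^{(1)})<F(W_1^{(2)},V_2^{(2)})$---and your care to avoid the implicit function theorem is appropriate given only $C^1$ is assumed.
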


\begin{proof}
It is easy to see that $y_2$ increases. As $V_2$ increases,
$x_3$ can be moved to the right to accommodate the increased volume.
The double interval is no longer in equilibrium, with
$$\sum_{i=1}^3 \psi_x(x_i)>0.$$
In order to be in equilibrium, the double interval must shift left.
This implies that $x_1$ and $x_2$ decrease.
To show that $x_3$ increases, note that in equilibrium,
$$\sum_{i=1}^3 \psi_x(x_i)=0.$$
So because $x_1$ and $x_2$ decrease, $x_3$ must increase so that
the sum remains zero.
\end{proof}

The next lemma shows that the density in volume coordinate grows at least linearly
but is approximately linear.

\begin{lemma}
\label{lem:fapproxlinear}
Let $f$ be a symmetric, strictly log-convex, $C^1$ density on $\R$.
In volume coordinate, there is $c>0$ such that for large $V$,
$$f(V)\geq cV.$$
Moreover,
$$\int_0^\infty \frac{1}{f(V)}$$
diverges.
\end{lemma}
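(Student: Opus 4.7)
The plan is to work throughout in volume coordinate, exploiting Lemma \ref{lem:volcoord}. By that lemma, $f$ is a convex function of $V$, and since $f$ is strictly log-convex in $x$, the derivative in volume coordinate $f'(V) = (\log f)'(x)$ is strictly increasing in $V$. Hence $f$ is strictly convex as a function of $V$. Moreover, since $f$ is symmetric in $x$ and $V(x) = \int_0^x f$ is odd in $x$, the function $f(V)$ is an even function of $V$.

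For the linear lower bound, I would first note that $f(V)$, being even and strictly convex, has its unique minimum at $V=0$, so $f'(0)=0$ and $f'(V)>0$ for every $V>0$. Fix any $V_0 > 0$ and set $c_0 = f'(V_0) > 0$. Convexity of $f$ in $V$ then yields
$$f(V) \geq f(V_0) + c_0 (V - V_0) \quad \text{for } V \geq V_0,$$
and for $V$ large enough the right-hand side exceeds $(c_0/2)V$. Taking $c = c_0/2$ gives the desired bound $f(V) \geq cV$ for $V$ large.

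For the divergence of $\int_0^\infty dV/f(V)$, I would invoke Lemma \ref{lem:convertvolcoord}, which gives
$$x(V) = \int_0^V \frac{dV'}{f(V')}.$$
Since $f$ is positive and, by the linear lower bound just proved, $\int_0^\infty f \, dx = \infty$ (the density grows at least linearly in $V$, so certainly $V(x)\to\infty$ as $x\to\infty$), the volume coordinate $V$ ranges over all of $[0,\infty)$ as $x$ ranges over $[0,\infty)$. Therefore
$$\int_0^\infty \frac{dV}{f(V)} = \lim_{V\to\infty} x(V) = \infty.$$

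There is no genuine obstacle here; the only mild care needed is in the translation of symmetry and strict convexity from $x$ to $V$ and in verifying that $V \to \infty$ as $x\to\infty$ so that the identity of Lemma \ref{lem:convertvolcoord} can be pushed to the improper limit.
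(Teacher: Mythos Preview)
Your proof is correct and follows essentially the same approach as the paper's: both obtain the linear lower bound from the strict positivity and monotonicity of $f'(V)$ for $V>0$, and both deduce divergence of the integral from Lemma~\ref{lem:convertvolcoord} by observing that $x(V)\to\infty$ as $V\to\infty$. One small point: your parenthetical justification that $V(x)\to\infty$ ``by the linear lower bound just proved'' is circular, since that bound is stated in the $V$-coordinate; the clean reason---which you already have in hand---is simply that $f(x)\geq f(0)>0$, so $V(x)=\int_0^x f\geq f(0)\,x\to\infty$.
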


In particular, this implies that although $f$ grows asymptotically at least as fast as $V$,
it cannot grow asymptotically faster than $V^c$ for any $c>1$.

\begin{proof}
Because $f'$ is strictly increasing (Lemma \ref{lem:volcoord}), there is $\eps > 0$ such that
for large $V$, $f'>\eps$. Hence for large $V$, $f(V)\geq (\eps/2)V$.

The integral diverges because as we take $V\to\infty$
in the formula in Lemma \ref{lem:convertvolcoord}, it must be the case that $x(V)\to\infty$.
\end{proof}

The next lemma shows that the rate of growth of the volume of the interval $[0,x]$
is on the order of $f(x)/\psi_x$ for typical densities.
Notice that the hypothesis $\psi_x^2 \geq M\psi_{xx}$ for large $x$ is mild
and holds for all densities we are interested in, namely $e^{x^n}$ and $e^{e^x}$.
In fact, it holds if the sign of $\psi_x^2-M\psi_{xx}$ changes only a finite number
of times, because we can easily check that a function satisfying
$\psi_x^2<M\psi_{xx}$ blows up in finite time.

\begin{lemma}[Fundamental Bounding Lemma]
\label{lem:fundbound}
Let $f=e^\psi$ be a symmetric, strictly log-convex, $C^1$ density on $\R$.
For $x>0$, define
$$V=\int_0^x f.$$
Then
$$\liminf_{x\to\infty} \frac{V}{f/\psi_x} \geq 1.$$
Furthermore, if $f$ is $C^2$ and $M>1$ is such that
$\psi_x^2 \geq M\psi_{xx}$ for $x$ large, then
$$\limsup_{x\to\infty} \frac{V}{f/\psi_x} \leq \frac{M}{M-1}.$$
In particular, if $\psi_x^2/\psi_{xx}\to\infty$ as $x\to\infty$, then
$$\lim_{x\to\infty} \frac{V}{f/\psi_x} = 1.$$
\end{lemma}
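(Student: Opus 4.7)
The key identity is $V=\int_0^x f\,dt=\int_0^x f'/\psi_x\,dt$, valid because $f'=f\psi_x$, and useful because strict log-convexity together with symmetry force $\psi_x$ to be strictly increasing with $\psi_x(0)=0$, so $1/\psi_x$ is strictly decreasing on $(0,\infty)$. Fix an auxiliary $x_0>0$ and set $V_0=\int_0^{x_0} f$; every estimate will be carried out on $[x_0,x]$ and then we let $x\to\infty$, with the piece $[0,x_0]$ contributing only lower-order terms.

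For the lower bound, monotonicity of $1/\psi_x$ gives
\[
V-V_0 \;=\; \int_{x_0}^x \frac{f'(t)}{\psi_x(t)}\,dt \;\geq\; \frac{1}{\psi_x(x)}\int_{x_0}^x f'(t)\,dt \;=\; \frac{f(x)-f(x_0)}{\psi_x(x)}.
\]
Multiplying by $\psi_x(x)/f(x)$ and discarding the nonnegative term $V_0\psi_x(x)/f(x)$ yields $V\psi_x/f \geq 1 - f(x_0)/f(x) \to 1$, since $f\to\infty$. Hence $\liminf_{x\to\infty} V\psi_x/f \geq 1$, which needs only the $C^1$ hypothesis.

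For the upper bound, integrate the same identity by parts (using the $C^2$ hypothesis) with $u=1/\psi_x$ and $dv=f'\,dt$:
\[
V-V_0 \;=\; \frac{f(x)}{\psi_x(x)} - \frac{f(x_0)}{\psi_x(x_0)} + \int_{x_0}^x \frac{f(t)\,\psi_{xx}(t)}{\psi_x(t)^2}\,dt.
\]
Choosing $x_0$ large enough that $\psi_{xx}/\psi_x^2 \leq 1/M$ on $[x_0,\infty)$, the last integral is bounded by $(V-V_0)/M$; solving for $V-V_0$ gives $V-V_0 \leq \frac{M}{M-1}\cdot f(x)/\psi_x(x)$, so
\[
\frac{V\,\psi_x(x)}{f(x)} \;\leq\; \frac{M}{M-1} + \frac{V_0\,\psi_x(x)}{f(x)}.
\]
To close the argument I need $\psi_x/f\to 0$, and this is the crux of the proof. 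It follows from the computation $(\psi_x e^{-\psi})' = e^{-\psi}(\psi_{xx}-\psi_x^2)$, which is negative on $[x_0,\infty)$ since $M>1$ forces $\psi_x^2>\psi_{xx}$; thus $\psi_x/f$ is positive and eventually decreasing, and because $\int_0^\infty \psi_x e^{-\psi}\,dx = 1/f(0)$ is finite, the decreasing tail must tend to $0$. Therefore $\limsup_{x\to\infty} V\psi_x/f \leq M/(M-1)$.

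The final ``in particular'' claim is then immediate: if $\psi_x^2/\psi_{xx}\to\infty$, every $M>1$ eventually satisfies $\psi_x^2\geq M\psi_{xx}$, so the upper bound gives $\limsup V\psi_x/f \leq M/(M-1)$ for every $M>1$; letting $M\to\infty$ forces $\limsup \leq 1$, which combined with the liminf bound pins the limit at $1$. The main obstacle is the upper bound: integration by parts produces an error term $\int f\psi_{xx}/\psi_x^2$ that must be reabsorbed into $V$ itself, and both this reabsorption and the auxiliary fact $\psi_x/f\to 0$ depend in a linked way on the hypothesis $\psi_x^2\geq M\psi_{xx}$ with $M>1$, explaining why the bound $M/(M-1)$ (rather than $1$) is the natural output.
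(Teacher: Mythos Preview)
Your proof is correct and rests on the same two ideas as the paper's: monotonicity of $\psi_x$ for the lower bound, and the pointwise inequality $\psi_{xx}/\psi_x^2\le 1/M$ for the upper bound. The execution differs only in coordinates. The paper switches to volume coordinate $V$, where $\psi_x=f'(V)$ and $\psi_{xx}=f(V)f''(V)$, so the ratio in question becomes $Vf'(V)/f(V)$; the lower bound is then the one-line estimate $\int_0^V f'\le Vf'(V)$, and the upper bound comes from observing that $(f/f')'_V=1-ff''/f'^2\ge 1-1/M$ and integrating, which immediately gives $f/f'\ge c+(1-1/M)V$ and hence $\limsup Vf'/f\le M/(M-1)$. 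Your integration-by-parts identity $V-V_0=f/\psi_x\big|_{x_0}^x+\int f\psi_{xx}/\psi_x^2$ is exactly the same computation written in $x$-coordinate, but it leaves you with the residual term $V_0\psi_x/f$ and hence the extra step of proving $\psi_x/f\to 0$; the volume-coordinate version absorbs that automatically, since $f/f'\to\infty$ is a direct consequence of the integrated inequality. So both routes are valid, with the paper's being marginally more streamlined for the upper bound.
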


\begin{proof}
In volume coordinate, we can rewrite the quatity in question via Lemma \ref{lem:volcoord} as
$$\frac{V}{f/\psi_x}=\frac{V}{f(V)/f'(V)}=\frac{Vf'(V)}{f(V)}.$$
Because $f'$ is nondecreasing,
$$f(V)-f(0)=\int_0^V f' \leq Vf'(V),$$
so that
$$\frac{Vf'(V)}{f(V)}\geq \frac{f(V)-f(0)}{f(V)}\to 1$$
as $V\to\infty$, implying the desired lower bound.

Now suppose $\psi_x^2 \geq M\psi_{xx}$ for $x$ large.
In volume coordinate, $\psi_x=f'(V)$
and
$$\psi_{xx}=\frac{df'(V)}{dx}=
\frac{df'(V)}{dV}\frac{dV}{dx} = f''(V)f(V).$$
So $f'^2/(ff'')\geq M$ for large $V$. Now for $V$ large
$$\paren{\frac{f}{f'}}'=\frac{f'^2-ff''}{f'^2}\geq 1-\frac{1}{M},$$
so that, for constants $c$ and $c_1$,
$$\frac{f}{f'}=c+\int_1^V \paren{\frac{f}{f'}}'\geq c_1+\frac{M-1}{M}V.$$
Therefore
$$\limsup_{V\to\infty} \frac{Vf'}{f}\leq \frac{M}{M-1},$$
as desired.
\end{proof}

For lower bounds, we start by showing that, although $V_2^*\geq V_1$ by definition,
it never approaches the line $V_2=V_1$.

\begin{prop}
On $\R$ with a symmetric, strictly log-convex, $C^1$ density $f$ such that $(\log f)'$ is unbounded,
given $V_1>0$, let $V_2^*$ be the unique value of $V_2$
such that the double interval in equilibrium and the triple interval tie.
Then $V_2^*>2V_1$ for all $V_1>0$.
\end{prop}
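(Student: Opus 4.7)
First I would reduce the claim to showing $\mu(V_1, 2V_1) > 0$, since by Lemma \ref{lem:db-muincdec} $\mu$ is strictly decreasing in $V_2$ and vanishes at $V_2 = V_2^*$; so $\mu(V_1, 2V_1) > 0$ forces $V_2^* > 2V_1$.

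The central observation is that, in volume coordinates (Lemma \ref{lem:volcoord}), the perimeter of the double interval viewed as a function of the free parameter $\widetilde V$,
\[
\phi(\widetilde V) = f(\widetilde V) + f(\widetilde V + V_1) + f(\widetilde V + V_1 + V_2),
\]
is strictly convex (as $f$ is strictly convex in $V$), and its unique critical point is precisely the $\widetilde V$ satisfying equilibrium \eqref{eq:equidoublevol}. Hence $P_2 = \min_{\widetilde V}\phi(\widetilde V)$, so any test value $\widetilde V'$ yields an upper bound $P_2 \leq \phi(\widetilde V')$, strict whenever $\widetilde V' \neq \widetilde V$.

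I would apply this with the symmetric test value $\widetilde V' = -(V_1 + V_2)/2$. Because $f$ in volume coordinates inherits symmetry about $V = 0$ from the symmetry of $f$ in position (since $V(x)$ is odd), a direct computation gives
\[
\phi(\widetilde V') = 2 f\!\paren{\frac{V_1+V_2}{2}} + f\!\paren{\frac{V_2 - V_1}{2}}.
\]
By Lemma \ref{lem:db-bdvdouble} the true $\widetilde V$ strictly exceeds $-(V_1+V_2)/2$, so $P_2 < \phi(\widetilde V')$. Subtracting from $P_3 = 2f(V_1/2) + 2f((V_1+V_2)/2)$ then gives
\[
\mu(V_1, V_2) > 2 f\!\paren{\frac{V_1}{2}} - f\!\paren{\frac{V_2 - V_1}{2}}.
\]
Setting $V_2 = 2V_1$ collapses $(V_2 - V_1)/2$ to $V_1/2$, so the right-hand side reduces to $f(V_1/2) > 0$, as needed.

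There is no serious obstacle: the whole argument hinges on recognizing the equilibrium condition as the first-order condition for minimizing the strictly convex $\phi$, and then choosing a well-placed test point. What makes it go through so cleanly at the threshold $V_2 = 2V_1$ is that the symmetric choice $\widetilde V' = -(V_1+V_2)/2$ produces two copies of $f((V_1+V_2)/2)$ matching the triple-interval outer terms, while leaving a residual $f((V_2 - V_1)/2)$ that coincides with $f(V_1/2)$ precisely when $V_2 = 2V_1$.
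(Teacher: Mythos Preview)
Your proof is correct and follows essentially the same approach as the paper: both use the test value $\widetilde V' = -(V_1+V_2)/2$ to bound $P_2$ from above by $2f\!\paren{\tfrac{V_1+V_2}{2}} + f\!\paren{\tfrac{V_2-V_1}{2}}$, and then compare with $P_3$. The only cosmetic difference is that the paper carries out the comparison directly at $V_2 = V_2^*$ (where $P_2 = P_3$), whereas you evaluate at $V_2 = 2V_1$ and invoke the monotonicity of $\mu$ from Lemma~\ref{lem:db-muincdec}.
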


\begin{proof}
In volume coordinate (Lemma \ref{lem:volcoord}), we have
\begin{multline*}
P_2 = f(\widetilde{V})+f(\widetilde{V}+V_1)+f(\widetilde{V}+V_1+V_2^*) \\
= 2f\paren{\frac{V_1}{2}}+2f\paren{\frac{V_1+V_2^*}{2}}=P_3 ,
\end{multline*}
where $\widetilde{V}$ is the unique value such that the double interval is in equilibrium.
If we use $-(V_1+V_2^*)/2$ in place of $\widetilde{V}$, then the resulting double interval
has perimeter greater than or equal to the original perimeter:
$$P_2\leq 2f\paren{\frac{V_1+V_2^*}{2}}+f\paren{\frac{V_2^*-V_1}{2}}.$$
Therefore
$$f\paren{\frac{V_2^*-V_1}{2}}\geq 2f\paren{\frac{V_1}{2}}>f\paren{\frac{V_1}{2}},$$
so that, because $f$ is increasing, $V_2^*>2V_1$.
\end{proof}

We now show that for slow-growing density $f$, 
specifically where $Vf''(V)$ is bounded in volume coordinate,
the function $\lambda$ grows superlinearly.
We do not know whether this hypothesis is sharp.

\begin{prop}
\label{prop:lambdatoinfty}
Let $f$ be a symmetric, strictly log-convex, $C^2$ density on $\R$.
Suppose that, in volume coordinate, $f'(V)$ is unbounded and $Vf''(V)$ is bounded.
Given $V_1>0$, let $V_2^*$ be the unique value of $V_2$
such that the double interval in equilibrium and the triple interval tie.
Then $V_2^*/V_1\to\infty$ as $V_1\to\infty$.
\end{prop}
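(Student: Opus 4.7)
The plan is to argue by contradiction: assume $V_2^*/V_1$ stays bounded, so there is $C>0$ and a sequence $V_1\to\infty$ with $V_2^*\le CV_1$. Throughout, I would work in volume coordinate (Lemma \ref{lem:volcoord}), where $f$ is strictly convex and symmetric, $f'$ is odd, strictly increasing, and unbounded, and the assumption $Vf''(V)\le M$ for large $V$ translates via $f''\le M/V$ into the slow-growth estimate $f'(\kappa V)-f'(V)\le M\log\kappa$ for $V$ large and $\kappa>1$. For convenience write $b = -\widetilde V - V_1$, which by Lemma \ref{lem:db-bdvdouble} lies in $(0,(V_2^*-V_1)/2)$, so the three boundary points in volume are $-V_1-b$, $-b$, $V_2^*-b$.

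The first step is to bound $b$ using the equilibrium condition. Since $f'$ is odd, equilibrium becomes $f'(V_2^*-b) = f'(V_1+b) + f'(b)$. Using $V_2^*-b\le CV_1$ and slow growth, $f'(V_2^*-b)\le f'(CV_1)\le f'(V_1)+M\log C$, while trivially $f'(V_1+b)\ge f'(V_1)$. Subtracting yields $f'(b)\le M\log C$, so $b\le B$ for some constant $B=B(C,M)$ independent of $V_1$.

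The second step is to estimate the excess $E := f(V_1+b) + f(V_2^*-b) - 2 f((V_1+V_2^*)/2)$. The tie condition $P_2=P_3$ in volume coordinates (Remark \ref{rem:nonstrictformula}), together with $f$ even, rearranges to $E = 2 f(V_1/2) - f(b)$. On the other hand, $V_1+b$ and $V_2^*-b$ are symmetric about $A/2:=(V_1+V_2^*)/2$ with half-gap $|\eta|=(V_2^*-V_1)/2-b\le (C-1)V_1/2$, and the standard identity
$$E = \int_0^{|\eta|} \int_{A/2-s}^{A/2+s} f''(u)\,du\,ds,$$
combined with $u\ge A/2-|\eta|=V_1+b\ge V_1$ in the inner integral and the pointwise bound $f''(u)\le M/u\le M/V_1$ there, yields $E\le M\eta^2/V_1 = O(V_1)$.

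The contradiction is then immediate: $f'$ unbounded and nondecreasing forces $f(V)/V\to\infty$ as $V\to\infty$, so $2f(V_1/2)$ is superlinear in $V_1$, while $f(b)=O(1)$ by the bound on $b$; hence $E=2f(V_1/2)-f(b)=\omega(V_1)$, contradicting $E=O(V_1)$. The main obstacle is the second step: one must exploit the single hypothesis $Vf''(V)\le M$ in two different guises, first logarithmically (to bound $b$ via the equilibrium) and then pointwise as $f''(u)\le M/V_1$ on the Taylor-defect interval. The latter is only valid because $u\ge V_1+b$, which in turn relies on Lemma \ref{lem:db-bdvdouble} to keep the half-gap $|\eta|$ strictly less than $A/2$; without this the defect could be logarithmically large instead of linear and the contradiction would evaporate.
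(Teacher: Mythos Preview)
Your argument is correct and follows a genuinely different route from the paper's proof. Both rely on the same three facts---Lemma \ref{lem:db-bdvdouble}, the hypothesis $Vf''(V)\le M$, and superlinearity of $f$---but they organize them differently. The paper never bounds $b$; instead it integrates $\partial\mu/\partial V_2$ along the segment from $V_2=V_1$ (where $\widetilde V=-V_1$ exactly, so the comparison term is $f(0)$ rather than your $f(b)$) to $V_2=V_2^*$, and bounds the integrand crudely by $f'(V_2)-f'(V_1)$ using $\widetilde V<-V_1$. After one more integration this yields the explicit inequality
\[
\frac{V_2^*}{V_1}\log\frac{V_2^*}{V_1}\ \ge\ \frac{c}{c_1},\qquad c=\frac{2f(V_1/2)-f(0)}{V_1}\to\infty,
\]
which immediately gives the conclusion and, more importantly, is reused verbatim in Corollary \ref{cor:lbpositional} to extract the concrete lower bound $V_2^*\ge V_1(\psi_x f^{-1}(cV_1))^{1-\eps}$. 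Your approach works pointwise at the tie: you first exploit the equilibrium together with the logarithmic growth of $f'$ to pin down $b=O(1)$, and then bound the second-order defect $E$ directly via the double-integral Taylor identity and the pointwise estimate $f''(u)\le M/V_1$ on $[V_1+b,V_2^*-b]$. This is arguably more transparent for the bare statement $V_2^*/V_1\to\infty$, but as written it is a contradiction argument and does not directly output a growth rate; to recover the corollaries of Section \ref{sect:bounds} you would need to make the dependence of the ``large $V_1$'' threshold on $C$ explicit. One minor remark: your closing comment that Lemma \ref{lem:db-bdvdouble} is needed ``to keep $|\eta|<A/2$'' is slightly off---that inequality is automatic; what the lemma actually provides is $b>0$, which is what makes $f'(V_1+b)\ge f'(V_1)$ in Step 1 and $u\ge V_1$ in Step 2 go through.
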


\begin{proof}
By Lemma \ref{lem:fapproxlinear}, there is $c>0$ such that for large $V_1$,
$2f(V_1/2)>2cV_1>cV_1+f(0)$. So
\begin{align*}
&cV_1\leq 2f\paren{\frac{V_1}{2}}-f(0)=\mu(V_1,V_1)-\mu(V_1,V_2^*) \\
&=- \int_{V_1}^{V_2^*} \pdif{\mu}{V_2}\, dV_2 
= \int_{V_1}^{V_2^*} f'(\widetilde{V}+V_1+V_2)-f'\paren{\frac{V_1+V_2}{2}} \, dV_2 \\
& \leq \int_{V_1}^{V_2^*} f'(V_2)-f'(V_1) \, dV_2
\leq \int_{V_1}^{V_2^*} f'(V_2^*)-f'(V_1) \, dV_2 \\
&=(V_2^*-V_1)(f'(V_2^*)-f'(V_1)) \leq V_2^*(f'(V_2^*)-f'(V_1)),
\end{align*}
where the third line follows because $f'$ is nondecreasing and $\widetilde{V}<-V_1$
due to Lemma \ref{lem:db-bdvdouble}. Therefore, for $V_1$ large,
\begin{equation}
\label{eq:lb-main}
f'(V_2^*)-f'(V_1) \geq \frac{c}{V_2^*/V_1}.
\end{equation}

Because $f'$ is unbounded, $f(V)/V\to\infty$ as $V\to\infty$,
and so the constant $c$ in \eqref{eq:lb-main} can be taken arbitrarily large.
By hypothesis on the growth of $f$, there is $c_1>0$ such that $f''(V)\leq c_1/V$. So
$$f'(V_2^*)-f'(V_1)=\int_{V_1}^{V_2^*} f''
\leq c_1 \int_{V_1}^{V_2^*} \frac{1}{V} = c_1 \log \frac{V_2^*}{V_1}.$$
Hence by \eqref{eq:lb-main}
$$\frac{V_2^*}{V_1}\log \frac{V_2^*}{V_1} \geq \frac{c}{c_1}.$$
As $V_1\to\infty$, $c$ can be taken arbitrarily large, and so $V_2^*/V_1\to\infty$,
as desired.
\end{proof}

\begin{remark}
Instead of $f$ being $C^2$, it suffices to assume that $f$ is $C^1$ and that there is $c>0$ such that $f'(V)-c \log V$ is eventually nonincreasing.
\end{remark}

The following corollary translates the hypothesis of Proposition \ref{prop:lambdatoinfty}
to the positional coordinate. It says that the conclusion of Proposition \ref{prop:lambdatoinfty}
holds for well-behaved densities that can grow as fast as $\exp(e^{cx})$, because
this is the density where $\psi_{xx}/\psi_x$ is constant.
We do not know whether this condition is sharp.

\begin{cor}
\label{cor:lbpositional}
Let $f=e^\psi$ be a symmetric, strictly log-convex, $C^2$ density on $\R$,
where $\psi_x$ is unbounded and $\psi_{xx}/\psi_x$ is bounded for $x$ large.
Given $V_1>0$, let $V_2^*$ be the unique value of $V_2$
such that the double interval in equilibrium and the triple interval tie.
Then $V_2^*/V_1\to\infty$ as $V_1\to\infty$.

Moreover, there is $c>0$ such that
for any $\eps > 0$, for $V_1$ large,
$$V_2^*\geq V_1 (\psi_x f^{-1}(cV_1))^{1-\eps},$$
where $f^{-1}$ is the inverse function of the density in the \emph{positional} coordinate.
\end{cor}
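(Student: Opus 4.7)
The plan is to verify that the positional hypotheses on $\psi$ imply the volume-coordinate hypotheses of Proposition \ref{prop:lambdatoinfty}, so as to deduce the qualitative claim $V_2^*/V_1 \to \infty$, and then to revisit that proof to extract the quantitative lower bound. The condition that $f'(V)$ is unbounded in volume coordinate is immediate, since $f'(V) = \psi_x(x(V))$ and $\psi_x$ is unbounded. For $Vf''(V)$ bounded, I differentiate $f'(V) = \psi_x$ against $V$ using $dx/dV = 1/f$ (Lemma \ref{lem:convertvolcoord}) to get $f''(V) = \psi_{xx}/f$. Since $\psi_{xx}/\psi_x$ is bounded and $\psi_x \to \infty$, for any $M > 1$ we have $\psi_x^2 \geq M \psi_{xx}$ eventually, so Lemma \ref{lem:fundbound} gives $V \leq (M/(M-1))\, f/\psi_x$ for $x$ large, whence $V f''(V) \leq (M/(M-1))\psi_{xx}/\psi_x$ is bounded. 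Proposition \ref{prop:lambdatoinfty} then yields $V_2^*/V_1 \to \infty$.

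For the quantitative bound, I would revisit the proof of Proposition \ref{prop:lambdatoinfty}. That proof establishes
\begin{equation*}
(V_2^* - V_1)\paren{f'(V_2^*) - f'(V_1)} \geq 2f\paren{\frac{V_1}{2}} - f(0),
\end{equation*}
and, from $f''(V) \leq c_1/V$ (equivalent to boundedness of $Vf''$), obtains $f'(V_2^*) - f'(V_1) \leq c_1 \log(V_2^*/V_1)$. Instead of the crude lower bound $f(V_1/2) \geq cV_1$ on the right-hand side, I would apply the reverse direction of Lemma \ref{lem:fundbound} (the $\limsup$ estimate, which gives $f \geq ((M-1)/M)V\psi_x$) to obtain the sharper $f(V_1/2) \geq c' V_1 \psi_x(x_{1/2})$, where $x_{1/2} := x(V_1/2)$ and $c' > 0$ is a constant. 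Combining yields $(V_2^*/V_1) \log(V_2^*/V_1) \geq c_2\, \psi_x(x_{1/2})$ for some $c_2 > 0$ and $V_1$ large; and since $\log t = o(t^\eps)$ for every $\eps > 0$, a standard rearrangement (arguing by contradiction that $V_2^*/V_1 < \psi_x(x_{1/2})^{1-\eps}$ is inconsistent with the inequality, once $\psi_x(x_{1/2})$ is large) delivers $V_2^*/V_1 \geq \psi_x(x_{1/2})^{1-\eps}$ for $V_1$ large.

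Finally, to convert $x_{1/2}$ into $f^{-1}(cV_1)$, I invoke Lemma \ref{lem:fapproxlinear} to pick $c > 0$ so small that $f(V_1/2) \geq cV_1$ in volume coordinate for $V_1$ large; in positional coordinate this reads $y := f^{-1}(cV_1) \leq x_{1/2}$, and monotonicity of $\psi_x$ then gives $\psi_x(y) \leq \psi_x(x_{1/2})$, which closes the argument. The main obstacle will be keeping the several layers of estimates consistent---two applications of Lemma \ref{lem:fundbound} (once to check the volume-coordinate hypothesis, once to obtain the sharper lower bound on $f(V_1/2)$), the $C^2$ growth estimate from $Vf''$ bounded, and the absorption of the logarithm into a power---while ensuring that the constant $c$ in the final inequality can be chosen independently of $\eps$.
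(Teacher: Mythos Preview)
Your proposal is correct and follows essentially the same route as the paper: verify the volume-coordinate hypotheses of Proposition~\ref{prop:lambdatoinfty} via Lemma~\ref{lem:fundbound} (using that $\psi_x$ unbounded and $\psi_{xx}/\psi_x$ bounded force $\psi_x^2/\psi_{xx}\to\infty$, so $Vf''(V)\sim\psi_{xx}/\psi_x$ is bounded), then re-enter that proof to obtain $(V_2^*/V_1)\log(V_2^*/V_1)\geq c\,f(V_1/2)/V_1\sim c'\psi_x(x(V_1/2))$, and finally use Lemma~\ref{lem:fapproxlinear} to pass from $x(V_1/2)$ to $f^{-1}(cV_1)$. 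Your worry about the constant $c$ depending on $\eps$ is unfounded: $c$ comes only from Lemma~\ref{lem:fapproxlinear} and is fixed before $\eps$ enters; only the threshold on $V_1$ (through the $\log t=o(t^\eps)$ absorption) depends on $\eps$.
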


\begin{proof}
Because $\psi_x$ is unbounded and $\psi_{xx}/\psi_x$ is bounded for $x$ large, $\psi_x^2/\psi_{xx}\to\infty$
as $x\to\infty$, so the Fundamental Bounding Lemma \ref{lem:fundbound} applies.
We have
$$Vf''(V)=V\frac{df'(V)}{dV}=V\frac{d\psi_x}{dx}\frac{dx}{dV}=\frac{V\psi_{xx}}{f}.$$
By Lemma \ref{lem:fundbound}, $V$ is on the same order as $f/\psi_x$ for $x$ large, so
$Vf''(V)$ is on the same order as $\psi_{xx}/\psi_x$, which is bounded.
Therefore the conclusion of the first part follows from Proposition \ref{prop:lambdatoinfty}.

For the second part, we use the method of Proposition \ref{prop:lambdatoinfty} to arrive at
\begin{equation}
\label{eq:concretelb}
\frac{V_2^*}{V_1}\log \frac{V_2^*}{V_1} \geq c\frac{f(V_1/2)}{V_1}
\end{equation}
for some constant $c>0$. By Lemma \ref{lem:fundbound}, the quantity on the right-hand side
is on the same order as $\psi_x(x(V_1/2))$, where $x$ is the function that converts from
volume to positional coordinate. By Lemma \ref{lem:fapproxlinear}, there is a constant $c_1>0$
such that
$$f(x(V))\geq c_1V$$
for $V$ large. Because $f$ is strictly increasing,
$$x(V)\geq f^{-1}(c_1V).$$
So \eqref{eq:concretelb} becomes
$$\frac{V_2^*}{V_1}\log \frac{V_2^*}{V_1} \geq c\psi_x f^{-1}(c_1V/2),$$
which implies the conclusion.
\end{proof}

\begin{remark}
\label{rem:tieintersecttwice}
Along a line $V_2 = rV_1$, the perimeter minimizer may change from a triple interval to a double interval and back to a triple interval, as numerically plotted for the Borell density $f(r)=\exp \paren{r^2}$ in Figure \ref{fig:tieintersection} of the introduction, where the line $V_2 = 10V_1$ intersects the curve of tie points in two places.
Indeed, whenever $\psi_x$ is unbounded and $\psi_{xx}/\psi_x$ is bounded
for $x$ large, Corollary \ref{cor:lbpositional} implies that $\lambda(V_1)/V_1\to\infty$ as $V_1\to\infty$,
while by Theorem \ref{thm:db-1dmain}, $\lambda$ tends to a positive limit as $V_1\to 0$, so $\lambda(V_1)/V_1\to\infty$ as $V_1\to 0$. Thus $\lambda(V_1)/V_1$ must assume some value twice in the interval $(0,\infty)$.
\end{remark}

The next two corollaries follow immediately from Corollary \ref{cor:lbpositional}. Corollary \ref{cor:lowerboundBorelllambda} gives a lower bound on the tie function $\lambda$
for the Borell density. Numerics suggest that the bound is not sharp: Figure \ref{fig:tieintersection} of the introduction suggests that $\lambda$
grows approximately quadratically for the Borell density.

\begin{cor}
\label{cor:lowerboundBorelllambda}
On $\R$ with the Borell density $f(x)=e^{x^2}$,
given $V_1>0$, let $V_2^*$ be the unique value of $V_2$
such that the double interval in equilibrium and the triple interval tie.
Then for all $\eps>0$,
$$V_2^*\geq V_1(\log V_1)^{1/2-\eps}$$
for $V_1$ large.
\end{cor}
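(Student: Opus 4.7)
The plan is to apply Corollary \ref{cor:lbpositional} directly. First I would verify its hypotheses for the Borell density $f(x) = e^{x^2}$: with $\psi(x) = x^2$, we have $\psi_x = 2x$, which is unbounded, and $\psi_{xx}/\psi_x = 1/x$, which tends to $0$ as $x \to \infty$ and is certainly bounded for large $x$. So Corollary \ref{cor:lbpositional} applies and gives, for any $\eps' > 0$ and $V_1$ large,
$$V_2^* \geq V_1 (\psi_x \circ f^{-1}(cV_1))^{1-\eps'}$$
for some constant $c > 0$.

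Next I would compute the inverse of the density in positional coordinate: from $y = e^{x^2}$ we get $f^{-1}(y) = \sqrt{\log y}$ for $x \geq 0$. Therefore $\psi_x(f^{-1}(cV_1)) = 2\sqrt{\log(cV_1)}$, and the bound above becomes
$$V_2^* \geq V_1 \bigl(2\sqrt{\log(cV_1)}\bigr)^{1-\eps'} = V_1 \cdot 2^{1-\eps'} (\log(cV_1))^{(1-\eps')/2}.$$

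Finally I would absorb the constants and simplify. For $V_1$ large, $\log(cV_1) = \log V_1 + \log c \geq \tfrac{1}{2} \log V_1$ (say), so $(\log(cV_1))^{(1-\eps')/2}$ is at least a constant multiple of $(\log V_1)^{(1-\eps')/2}$. Writing $(1-\eps')/2 = 1/2 - \eps'/2$ and noting that the constant factor $2^{1-\eps'}$ times the constant coming from the $\log c$ shift can be absorbed into a slightly smaller exponent on $\log V_1$ for $V_1$ large, we obtain $V_2^* \geq V_1 (\log V_1)^{1/2 - \eps}$ for $V_1$ large, by choosing $\eps' < 2\eps$ in the application of Corollary \ref{cor:lbpositional}.

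There is no real obstacle here: the result is an immediate translation of Corollary \ref{cor:lbpositional} to the specific case of the Borell density, with the only minor care being to swallow the multiplicative constants $2^{1-\eps'}$ and $\log c$ into the exponent by loosening $\eps'$ slightly to $\eps$.
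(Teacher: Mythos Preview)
Your proposal is correct and follows exactly the approach the paper intends: the paper states that this corollary follows immediately from Corollary~\ref{cor:lbpositional}, and you carry out precisely that computation, verifying the hypotheses for $\psi(x)=x^2$ and absorbing the resulting constants into the exponent by shrinking $\eps'$.
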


\begin{cor}
\label{cor:lbeex}
On $\R$ with the density $f(x)=e^{e^x}$,
given $V_1>0$, let $V_2^*$ be the unique value of $V_2$
such that the double interval in equilibrium and the triple interval tie.
Then for all $\eps>0$,
$$V_2^*\geq V_1(\log V_1)^{1-\eps}$$
for $V_1$ large.
\end{cor}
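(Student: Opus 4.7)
The plan is to apply Corollary \ref{cor:lbpositional} directly with $f(x)=e^{e^x}$. First I would verify the hypotheses: writing $f=e^\psi$ with $\psi(x)=e^x$, we have $\psi_x=e^x$, which is unbounded on $[0,\infty)$, and $\psi_{xx}/\psi_x=e^x/e^x=1$, which is (trivially) bounded for $x$ large. Moreover $f$ is $C^2$, symmetric (after even extension via $|x|$, as throughout the paper), and strictly log-convex since $\psi_{xx}=e^x>0$. Hence the corollary's hypotheses are met.

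Next I would compute the quantity $\psi_x(f^{-1}(cV_1))$ that appears in the bound. Inverting $f(x)=e^{e^x}$ on $x\geq 0$ gives $f^{-1}(y)=\log\log y$, so
\[
\psi_x\bigl(f^{-1}(cV_1)\bigr)=e^{\log\log(cV_1)}=\log(cV_1).
\]
Substituting into the conclusion of Corollary \ref{cor:lbpositional} yields, for any $\eta>0$ and $V_1$ large,
\[
V_2^*\geq V_1\bigl(\log(cV_1)\bigr)^{1-\eta}.
\]

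Finally I would absorb the constant $c$ into the logarithm. Given $\eps>0$, pick $\eta=\eps/2$. Since $\log(cV_1)=\log V_1+\log c$, for $V_1$ large we have $\log(cV_1)\geq \tfrac{1}{2}\log V_1$, so $(\log(cV_1))^{1-\eta}\geq 2^{\eta-1}(\log V_1)^{1-\eta}$. For $V_1$ large enough, $(\log V_1)^{\eta/2}\geq 2^{1-\eta}$, and therefore $(\log(cV_1))^{1-\eta}\geq (\log V_1)^{1-\eta-\eta/2}\geq (\log V_1)^{1-\eps}$. Combining gives $V_2^*\geq V_1(\log V_1)^{1-\eps}$ for $V_1$ large, as claimed. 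There is no real obstacle here; the entire argument is a plug-in of $f=e^{e^x}$ into the previously established general lower bound, with a short asymptotic cleanup to remove the multiplicative constant inside the logarithm.
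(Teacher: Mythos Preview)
Your proof is correct and follows exactly the approach intended by the paper, which simply notes that this corollary follows immediately from Corollary~\ref{cor:lbpositional}. You have filled in the verification of hypotheses and the computation $\psi_x\bigl(f^{-1}(cV_1)\bigr)=\log(cV_1)$ correctly, and your asymptotic cleanup to absorb the constant $c$ is sound (indeed more detailed than necessary).
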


The following proposition gives an upper bound for the tie points.
Note that the hypothesis $\psi^2 \geq M\psi_x$ for $x$ large is mild.
In fact, $\psi^2 < M\psi_x$ cannot hold for all $x$ large,
since this inequality implies that the $\psi$ blows up in finite time.

\begin{prop}
\label{prop:lambdaupbd}
Let $f=e^\psi$ be a symmetric, strictly log-convex, $C^1$ density on $\R$,
where $\psi_x$ is unbounded and $\psi^2 \geq M\psi_x$ for $x$ large.
Given $V_1>0$, let $V_2^*$ be the unique value of $V_2$
such that the double interval in equilibrium and the triple interval tie.
Then, for constants $c_1>0$ and $c_2$,
$$V_2^* \leq c_1\exp\paren{\psi\psi_x^{-1}(2\psi_x A)}$$
for $V_1$ large, where
$$A=\psi^{-1}(\log V_1+2\log\log V_1+c_2).$$
\end{prop}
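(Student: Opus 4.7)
The plan is to reduce the upper bound to a single comparison $P_2 > P_3$ at a carefully chosen $V_2$. By the strict monotonicity of $\mu$ in $V_2$ (Lemma \ref{lem:db-muincdec}), it suffices to exhibit a value of $V_2$ of the form $c_1\exp(\psi\psi_x^{-1}(2\psi_x(A)))$ at which $\mu(V_1,V_2) < 0$, equivalently $P_2 > P_3$.

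The key step is a convexity identity in volume coordinate (Lemma \ref{lem:volcoord}). Let $|U_1| = -\widetilde V > 0$ and $U_3 = \widetilde V + V_1 + V_2 > 0$, so $|U_1| + U_3 = V_1 + V_2$. Convexity of $f$ gives
$$f(\widetilde V) + f(\widetilde V + V_1 + V_2) = f(|U_1|) + f(U_3) \geq 2 f\!\paren{\frac{V_1+V_2}{2}}.$$
Combining with $f(\widetilde V + V_1) = f(|U_1| - V_1)$ (using Lemma \ref{lem:db-bdvdouble}, which gives $|U_1| > V_1$, and the symmetry of $f$) yields
$$P_2 - P_3 \geq f(|U_1| - V_1) - 2 f\!\paren{\frac{V_1}{2}},$$
so it is enough to find $V_2$ for which the equilibrium $|U_1|$ satisfies $f(|U_1| - V_1) > 2 f(V_1/2)$, equivalently $|U_1| > V_1 + V^*$ where $V^* := f^{-1}(2 f(V_1/2))$.

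The equilibrium $|U_1|$ is strictly increasing in $V_2$, so this inequality holds past some threshold $V_2^{\mathrm{thr}}$. Set $a = x(V_1 + V^*)$, the positional coordinate of $|U_1|$ at the threshold. The equilibrium condition $f'(U_3) = f'(|U_1|) + f'(|U_1| - V_1) \leq 2 f'(|U_1|)$ together with the identification $f'(V) = \psi_x(x(V))$ of Lemma \ref{lem:volcoord} gives $x(U_3) \leq \psi_x^{-1}(2\psi_x(a))$ at the threshold. Bounding $U_3 \leq x(U_3)\cdot f(x(U_3))$ trivially (or more sharply via the Fundamental Bounding Lemma \ref{lem:fundbound} when applicable) and absorbing the lower-order term $V^*$ yields
$$V_2^{\mathrm{thr}} = U_3 + |U_1| - V_1 \leq c_1 \exp(\psi\psi_x^{-1}(2\psi_x(a)))$$
for a suitable constant $c_1$.

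It remains to verify that $A = \psi^{-1}(\log V_1 + 2\log\log V_1 + c_2) \geq a$ for some $c_2$ and all $V_1$ large; then $B := \psi_x^{-1}(2\psi_x(A)) \geq \psi_x^{-1}(2\psi_x(a))$ and the bound follows. Using Lemma \ref{lem:fundbound} to estimate $V_1 + V^* \sim f(a)/\psi_x(a)$, together with $V^* \sim V_1$ (which comes from $f(V^*) = 2 f(V_1/2)$ and the approximation $f(V) \sim V\psi_x$ at the relevant scale), one computes $\psi(a) = \log V_1 + O(\log\log V_1)$, so a sufficiently large $c_2$ makes $\psi(A) \geq \psi(a)$. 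The main obstacle is the careful bookkeeping of these error terms: the hypothesis $\psi^2 \geq M\psi_x$ for $x$ large is the growth condition that keeps all the log-corrections bounded, and the $2\log\log V_1 + c_2$ slack built into $A$ is precisely engineered to absorb both the $O(\log\log V_1)$ discrepancy in the location of $a$ and any additional logarithmic factors arising from the coarse bound $U_3 \leq x(U_3)\cdot f(x(U_3))$.
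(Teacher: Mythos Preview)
Your overall strategy parallels the paper's closely. Both rest on the same inequality: in volume coordinate, convexity gives $f(\widetilde V)+f(\widetilde V+V_1+V_2)\geq 2f((V_1+V_2)/2)$, which the paper phrases (via Proposition~\ref{prop:1d-single}) as $f(x_1)+f(x_3)\geq 2f(y_2)$. The paper works directly at the tie point $V_2=V_2^*$, deduces $f(x_2)\leq 2f(y_1)$, and then bounds first $|x_2|$ and then $|x_1|$ by $A$; you instead locate the threshold where this inequality becomes an equality and bound $V_2^*$ by that threshold. Your $a$ is exactly the paper's $|x_1|$ evaluated at that threshold, so the two routes are essentially the same computation in different clothing.

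There is, however, a genuine gap in your final step. The crude bound $U_3\leq x(U_3)\cdot f(x(U_3))$ is too weak: it carries an extra factor $x(U_3)$, which tends to infinity with $V_1$ and cannot be absorbed into the constants $c_1$ or $c_2$. Concretely, take $\psi(x)=x^2$: then $\psi_x^{-1}(2\psi_x(t))=2t$, so $\psi(B)=4\psi(A)$ and increasing $\psi(A)$ by a constant increases $\psi(B)$ only by a constant, while $\log x(U_3)\sim\tfrac12\log\log V_1\to\infty$. Your parenthetical appeal to the Fundamental Bounding Lemma does not help either: its upper bound requires the hypothesis $\psi_x^2\geq M\psi_{xx}$, which is different from the present $\psi^2\geq M\psi_x$. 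What you actually need is the elementary fact (Lemma~\ref{lem:fapproxlinear}) that $f(V)\geq cV$ in volume coordinate, equivalently $V(x)\leq c^{-1}f(x)$ for $x$ large; the paper obtains it in the form $\int_1^{x_3}f\leq \psi_x(1)^{-1}\int_1^{x_3}\psi_x f$. With that in hand, $U_3\leq c\,e^{\psi(B)}$ and your argument closes.

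A smaller issue: your justification of $\psi(a)\leq \log V_1+2\log\log V_1+c$ via ``$V^*\sim V_1$'' is too loose to pin down the coefficient $2$. The paper does this in two stages: first bound $|x_2|$ from $f(x_2)\leq 2f(y_1)$ together with the bootstrap $f\leq cV(\log V)^2$ (this is where $\psi^2\geq M\psi_x$ enters), and then bound $|x_1|$ from $V_1=\int_{|x_2|}^{|x_1|}f\geq (f(x_1)-f(x_2))/\psi_x(|x_1|)$ by the same bootstrap. You should make these two steps explicit rather than rely on the heuristic $V^*\sim V_1$.
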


\begin{proof}
Let $V_2=V_2^*$. Let the double interval in equilibrium be
$(x_1,x_2,x_3)$ and the triple interval be $(y_1,y_2)$.
We have
$$f(x_1)+f(x_2)+f(x_3)=P_3=P_2=2f(y_1)+2f(y_2).$$
By Proposition \ref{prop:1d-single}, the interval $[-y_2,y_2]$ is the best
single bubble for volume $V_1+V_2$, so
$$f(x_1)+f(x_3)\geq 2f(y_2).$$
Thus
$$f(x_2)\leq 2f(y_1),$$
which is equivalent to
\begin{equation}
\label{eq:lu-psix2}
\psi(x_2)\leq \psi(y_1)+\log 2.
\end{equation}

By the Fundamental Bounding Lemma \ref{lem:fundbound},
for $x$ large,
\begin{equation}
\label{eq:lu-f}
f(x)\leq 2V(x)\psi_x(x) \leq \frac{2}{M}V(x)\psi(x)^2
=\frac{2}{M}V(x)(\log f)^2,
\end{equation}
because $\psi^2\geq M\psi_x$ for $x$ large.
For $x$ large, $\log f \leq f^{1/4}$, so $\eqref{eq:lu-f}$ implies that
$f\leq cV^2$ for a constant $c$. Using this on the right-hand side
of $\eqref{eq:lu-f}$ gives $f\leq cV(\log V)^2$ for a new constant $c$
(we allow $c$ to change from line to line).
Hence \eqref{eq:lu-psix2} becomes
\begin{align}
\psi(x_2)&\leq \log f(y_1)+\log 2
\leq \log V(y_1)+2\log\log V(y_1)+c \nonumber \\
&\leq \log V_1+2\log\log V_1 + c,
\label{eq:lu-x2est}
\end{align}
because $V(y_1)=V_1/2$.
So $\abs{x_2}\leq A$, where $A$ is defined as in the proposition statement.

We now estimate $x_1$. We have
$$V_1=\int_{|x_2|}^{|x_1|} f \geq
\frac{1}{\psi_x(|x_1|)}\int_{|x_2|}^{|x_1|} \psi_x f
=\frac{f(x_1)-f(x_2)}{\psi_x(|x_1|)}.$$
So
\begin{align}
f(x_1)&\leq f(x_2)+\psi_x(|x_1|)V_1
\leq f(x_2)+\frac{1}{M}\psi(x_1)^2V_1 \nonumber \\
&=f(x_2)+\frac{1}{M}(\log f(x_1))^2V_1 \nonumber \\
&\leq cV_1(\log V_1)^2 + \frac{1}{M}(\log f(x_1))^2V_1,
\label{eq:lu-fx1}
\end{align}
by \eqref{eq:lu-x2est}.
For $x$ large,
$\log f \leq f^{1/4}$, so by $\eqref{eq:lu-fx1}$,
$$f(x_1)\leq cV_1(\log V_1)^2+\frac{1}{M}f(x_1)^{1/2}V_1.$$
Solving gives $f(x_1)\leq cV_1^2$.
Applying this to the right-hand side of $\eqref{eq:lu-fx1}$ yields
$$f(x_1)\leq cV_1(\log V_1)^2.$$
Hence
\begin{equation}
\label{eq:lu-x1est}
\psi(x_1)=\log f(x_1)\leq \log V_1 + 2\log\log V_1 + c,
\end{equation}
which implies that $\abs{x_1}\leq A$ where $A$ is defined as in the
proposition statement.

Therefore, for $V_1$ large,
\begin{align*}
V_2^*&\leq 2\int_0^{x_3} f \leq 2\int_1^{x_3}f
\leq \frac{2}{\psi_x(1)}\int_1^{x_3} \psi_x f\\
&=\frac{2(f(x_3)-f(1))}{\psi_x(1)}
\leq cf(x_3)=c\exp \psi(x_3)\\
&=\exp\psi\psi_x^{-1}(\psi_x(x_3))
=\exp\psi\psi_x^{-1}\paren{\psi_x(|x_1|)+\psi_x(|x_2|)}\\
&\leq \exp\psi\psi_x^{-1}(2\psi_xA),
\end{align*}
by \eqref{eq:lu-x2est} and \eqref{eq:lu-x1est}, because
$f'=f\psi_x$ and $\psi_x(x_3)=\psi_x(|x_1|)+\psi_x(|x_2|)$
due to the equilibrium.
\end{proof}

The following corollaries compute the upper bounds explicitly
for the densities $e^{x^2}$ and $e^{e^x}$.

\begin{cor}
\label{cor:upperboundBorelllambda}
On $\R$ with the Borell density $f(x)=e^{x^2}$,
given $V_1>0$, let $V_2^*$ be the unique value of $V_2$
such that the double interval in equilibrium and the triple interval tie.
Then for all $\eps>0$,
$$V_2^*\leq V_1^{4+\eps}$$
for $V_1$ large.
\end{cor}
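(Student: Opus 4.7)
The plan is to derive the corollary as an immediate specialization of Proposition \ref{prop:lambdaupbd} to the Borell density $f(x)=e^{x^2}$, so that the work reduces to plugging in the explicit functions $\psi$, $\psi_x$, $\psi^{-1}$, $\psi_x^{-1}$ and absorbing logarithmic factors into the $\eps$ in the exponent.

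First I would verify that the hypotheses of Proposition \ref{prop:lambdaupbd} hold for $\psi(x)=x^2$. Clearly $\psi_x(x)=2x$ is unbounded. Moreover $\psi^2/\psi_x=x^3/2\to\infty$, so $\psi^2\geq M\psi_x$ for $x$ large and any fixed $M>1$, as required. In particular, strict log-convexity and the $C^1$ (indeed $C^\infty$) and symmetry conditions are trivial.

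Next I would compute the explicit bound. With $\psi(x)=x^2$ we have $\psi^{-1}(y)=\sqrt{y}$ and $\psi_x^{-1}(y)=y/2$. Proposition \ref{prop:lambdaupbd} gives constants $c_1>0$ and $c_2$ such that for $V_1$ large,
\begin{equation*}
V_2^* \leq c_1\exp\paren{\psi\psi_x^{-1}(2\psi_x A)},\qquad
A=\psi^{-1}(\log V_1+2\log\log V_1+c_2)=\sqrt{\log V_1+2\log\log V_1+c_2}.
\end{equation*}
Then $2\psi_x(A)=4A$, so $\psi_x^{-1}(2\psi_x(A))=2A$, and $\psi(2A)=4A^2=4(\log V_1+2\log\log V_1+c_2)$. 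Exponentiating gives
\begin{equation*}
V_2^* \leq c_1 \exp\paren{4\log V_1+8\log\log V_1+4c_2}
= C\, V_1^{4}(\log V_1)^{8}
\end{equation*}
for some constant $C>0$ and all sufficiently large $V_1$.

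Finally, for every $\eps>0$ we have $(\log V_1)^{8}\leq V_1^{\eps/2}$ and $C\leq V_1^{\eps/2}$ for $V_1$ sufficiently large, so $V_2^*\leq V_1^{4+\eps}$, as claimed. I do not foresee a genuine obstacle: the only care needed is in correctly reading the composition $\psi\circ \psi_x^{-1}$ in the statement of Proposition \ref{prop:lambdaupbd} and in checking that the polylogarithmic overhead $(\log V_1)^{8}$ can be absorbed into an arbitrarily small power of $V_1$, which is immediate.
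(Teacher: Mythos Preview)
Your proposal is correct and follows essentially the same approach as the paper: verify the hypotheses of Proposition~\ref{prop:lambdaupbd} for $\psi(x)=x^2$, plug in $\psi$, $\psi_x$, and their inverses to obtain $V_2^*\leq c\,V_1^4(\log V_1)^8$, and absorb the logarithmic factor into $V_1^{\eps}$. The paper's argument is the same, just slightly more terse in writing out the composition.
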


\begin{proof}
Since $\psi=x^2$ and $\psi_x=2x$,
$\psi^2\geq \psi_x$ for $x$ large, so 
Proposition \ref{prop:lambdaupbd} applies.
Therefore
\begin{align*}
V_2^*&\leq c_1\exp{4(\log V_1+2\log\log V_1+c_2)} \\
&=cV_1^4(\log V_1)^8 \leq V_1^{4+\eps}
\end{align*}
for any $\eps>0$ and $V_1$ large,
where $c_1$, $c_2$ and $c$ are constants.
\end{proof}

\begin{cor}
On $\R$ with the density $f(x)=e^{e^x}$,
given $V_1>0$, let $V_2^*$ be the unique value of $V_2$
such that the double interval in equilibrium and the triple interval tie.
Then for all $\eps>0$,
$$V_2^*\leq V_1^{2+\eps}$$
for $V_1$ large.
\end{cor}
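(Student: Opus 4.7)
The plan is to apply Proposition \ref{prop:lambdaupbd} directly to the density $f(x)=e^{e^x}$, so that $\psi(x)=e^x$ and $\psi_x(x)=e^x$. First I would verify the hypotheses: $\psi_x=e^x$ is clearly unbounded, and $\psi^2=e^{2x}\geq M\psi_x=Me^x$ holds for $x$ large (specifically, whenever $e^x\geq M$), so Proposition \ref{prop:lambdaupbd} applies. This gives the bound
$$V_2^*\leq c_1\exp\!\bigl(\psi\psi_x^{-1}(2\psi_x A)\bigr),\qquad A=\psi^{-1}(\log V_1+2\log\log V_1+c_2).$$

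Next I would evaluate each piece explicitly using $\psi=\psi_x=\exp$ and $\psi^{-1}=\psi_x^{-1}=\log$. Writing $L:=\log V_1+2\log\log V_1+c_2$, we have $A=\log L$, hence $\psi_x(A)=e^A=L$. Because $\psi\psi_x^{-1}=\exp\circ\log=\mathrm{id}$, this simplifies the outer expression to $\psi\psi_x^{-1}(2\psi_x A)=2L$. Exponentiating,
$$V_2^*\leq c_1\exp(2L)=c_1 e^{2c_2}\, V_1^2(\log V_1)^4 = c\,V_1^2(\log V_1)^4$$
for a constant $c>0$. The logarithmic factor is absorbed at the cost of an arbitrarily small polynomial loss: for any $\eps>0$, $(\log V_1)^4\leq V_1^\eps$ for $V_1$ large, so $V_2^*\leq V_1^{2+\eps}$ as claimed.

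There is no real obstacle here — the proof is a direct computation along the same lines as Corollary \ref{cor:upperboundBorelllambda}. The only step requiring any care is the simplification $\psi\psi_x^{-1}=\mathrm{id}$, which is what makes the iterated exponential density yield a cleaner exponent ($2$ rather than $4$) than the Borell density: because $\psi$ and $\psi_x$ coincide, one full layer of composition disappears.
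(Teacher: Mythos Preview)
Your proof is correct and essentially identical to the paper's: both apply Proposition~\ref{prop:lambdaupbd} with $\psi=\psi_x=e^x$, use $\psi\psi_x^{-1}=\mathrm{id}$ to reduce the bound to $c_1\exp(2L)=cV_1^2(\log V_1)^4$, and absorb the logarithmic factor into $V_1^\eps$. Your write-up is in fact slightly more explicit than the paper's in spelling out the intermediate quantity $A=\log L$ and the identity $\psi\psi_x^{-1}=\mathrm{id}$, but the argument is the same.
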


\begin{proof}
Since $\psi=\psi_x=e^x$, $\psi^2\geq \psi_x$ for $x$ large,
so Proposition \ref{prop:lambdaupbd} applies.
Therefore
\begin{align*}
V_2^*&\leq c_1\exp{2(\log V_1+2\log\log V_1+c_2)} \\
&=cV_1^2(\log V_1)^4 \leq cV_1^{2+\eps}
\end{align*}
for any $\eps>0$ and $V_1$ large,
where $c_1$, $c_2$ and $c$ are constants.
\end{proof}

\section{Higher Dimensions}
\label{sect:higher}
In $\R^N$ with radial density going to infinity,
for any $n$ given volumes, a perimeter-minimizing $n$-bubble exists by an argument (\cite{BDKS}, Rmk. 3.3) after Morgan and Pratelli (\cite{MP}, Thm. 3.3).

In this section we use numerical techniques to determine the shape and surface area of potentially perimeter-minimizing double bubbles in higher dimensions with Borell density $e^{r^2}$ as in Figure \ref{fig:brakketwothree} of the Introduction. The code used for these computations can be found at \url{https://github.com/arjunkakkar8/doublebubble}.

To do so, we use the software Brakke's Surface Evolver \cite{Br}. Starting with an initial shape, the surface evolver iteratively minimizes the energy associated with that configuration by moving its pieces while maintaining the values of constraints defined on the configuration.

To examine the double bubble in space with Borell density, we define the initial configuration of two adjacent cubes (squares in the plane). Then the energy of the system is defined as the weighted perimeter of the cubes. Next the weighted volume of the cubes is calculated by using the divergence theorem. With boundary $B$, the weighted volume is
$$V_B=\oint_{B} \mathbf{F}\cdot\mathbf{n}, \text{ where } \nabla\cdot\mathbf{F}=e^{\mathbf{r}^2}.$$

\noindent Note that the choice of $\mathbf{F}$ for which $\nabla\cdot\mathbf{F}=e^{\mathbf{r}^2}$ is not unique. We used
?$$\mathbf{F}=\bigg(e^{y^2+z^2}\int e^{x^2} dx, \quad 0, \quad 0 \bigg).$$
Since there is no closed form for the integral, the vector field is evaluated by using a series expansion. Care is taken to use sufficiently many terms so that within the relevant radius,
the error from the approximation is negligible compared to the
4-digit precision of the Evolver.
Then the system is evolved down the energy gradient while fixing the weighted volumes. The final state that the system converges to for the case of 2D and of 3D is depicted in Figure \ref{fig:brakketwothree}.

In closing, we conjecture that some of the behavior on $\R^1$ will recur in higher dimensions.

\begin{conj} In $\mathbb{R}^N$ with a smooth, radial, log-convex density, a perimeter-minimizing double bubble is either
\begin{enumerate}[label=(\roman*)]
\item the bubble inside a bubble (e.g. for $V_1$ small and $V_2$ large), or
\item the standard double bubble (e.g. for $V_2$ close to $V_1$).
\end{enumerate}
\end{conj}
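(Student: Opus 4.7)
The plan is to adapt the classical double bubble machinery to the density setting in four stages. \textbf{Existence and regularity} is already granted: a perimeter-minimizing double bubble exists by the Morgan--Pratelli-type argument cited in Section \ref{sect:higher}, and standard cluster regularity theory gives smooth interfaces of weighted constant mean curvature meeting at 120 degrees along a codimension-two singular set. The key questions, as in the flat case, are symmetry and classification.

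The crucial step is to establish that the minimizer is \emph{axially symmetric about some axis through the origin}. In the constant-density case this is Hutchings' symmetry theorem, proved by reflecting across pairs of parallel hyperplanes and choosing the half of smaller perimeter. In our radial log-convex setting, only reflections across hyperplanes containing the origin preserve the density, so the argument must restrict to such reflections; log-convexity of $f$ along radial lines should make the symmetrized configuration no worse. One would then iterate over such hyperplanes, showing the minimizer is invariant under the full stabilizer of some axis through the origin.

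Granted axial symmetry, the problem reduces to a weighted two-dimensional variational problem in the half-plane $\set{(\rho,z):\rho \geq 0}$ with weight $\rho^{N-2} f(\sqrt{\rho^2+z^2})$. The Hutchings function technique, combined with first and second variation formulae, should then show that the only stable axially symmetric double bubbles are either two concentric spheres centered at the origin (bubble inside a bubble) or three spherical caps meeting at 120 degrees along a circle perpendicular to the symmetry axis (standard double bubble). Comparing the two perimeters then parallels Section \ref{sect:db1d}: for $V_2$ close to $V_1$ the near-balance precludes optimality of the concentric spheres, in analogy with Proposition \ref{lem:db-equalvol}; for $V_1$ small and $V_2$ large, the outer separating cap of the standard double bubble sits at large radius where the density is huge, so concentric spheres win, in analogy with Proposition \ref{lem:db-v1fixedv2large}.

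The principal obstacle is the symmetry step. Hutchings' argument exploits the translation invariance of Euclidean space to paste configurations across arbitrary hyperplanes; with a non-constant radial density, only reflections through hyperplanes containing the origin are free, and it is not clear that these suffice to force full axial symmetry rather than, say, invariance under merely a smaller finite subgroup. Classification of axially symmetric equilibria is also nontrivial: even in the flat case the Hutchings function bounds require delicate analysis, and with density one must additionally rule out exotic configurations such as ``onion'' shapes with multiple concentric shells per bubble that have no flat-space analog.
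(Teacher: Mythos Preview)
The statement you are attempting to prove is labeled a \emph{Conjecture} in the paper, and the paper offers no proof---only numerical evidence from Brakke's Surface Evolver (Figures \ref{fig:brakketwothree} and \ref{fig:g16conj}). So there is no ``paper's own proof'' to compare against; your proposal is not a proof but a research program, and you yourself correctly flag that it is incomplete.

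On the merits of the program: the obstacles you name are not merely technical but fundamental. The Hutchings symmetry argument in constant density uses reflections across \emph{all} hyperplanes, and the freedom to slide the reflecting hyperplane parallel to itself is what forces full axial symmetry. With a radial density you are restricted to hyperplanes through the origin, as you note, and there is no known substitute for the sliding step; invariance under reflections through the origin gives at best a discrete symmetry, not axial symmetry. Even granting axial symmetry, your claim that ``the only stable axially symmetric double bubbles are either two concentric spheres \dots\ or three spherical caps meeting at 120 degrees'' is unsupported: in the flat case this classification required the full Hutchings--Morgan--Ritor\'e--Ros machinery and later Reichardt's extension, and there is no reason to expect the interfaces in the density case to be spherical at all---the weighted constant-mean-curvature condition with radial $f$ does not yield spheres except for spheres centered at the origin. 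So both the symmetry reduction and the subsequent classification are open problems, not gaps to be filled in; what you have written is a reasonable outline of why the conjecture is plausible, but it is far from a proof, and the paper does not claim otherwise.
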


\bibliographystyle{alpha}

\begin{thebibliography}{10}

\bibitem[Ba]{Ba} [Ba] V. Bayle (2004). Propri\'{e}t\'{e}s de concavit\'{e} du profil isop\'{e}rim\'{e}trique et applications, PhD thesis, Institut Joseph Fourier, Grenoble.

\bibitem[BDKS]{BDKS} [BDKS] 
Eliot Bongiovanni, Alejandro Diaz, Arjun Kakkar, Nat Sothanaphan. Isoperimetry in surfaces of revolution with density, Missouri J. Math. Sci. 30 (2018), no. 2, 150--165. \url{https://arxiv.org/abs/1709.06040}.

%\bibitem[Bo]{Bo} [Bo] Christer Borell. {The Brunn-Minkowski inequality in Gauss space}, Invent. Math. 30 (1975), 207-216.

\bibitem[Br]{Br} [Br] Kenneth Brakke. The surface evolver, \url{http://facstaff.susqu.edu/brakke/evolver/evolver.html}.

\bibitem[BH]{BH} [BH] Serguei G. Bobkov, Christian Houdr\'{e} (1997). Some connections between isoperimetric and Sobolev-type inequalities, Mem. Amer. Math. Soc. 129, no. 616, viii+111.

\bibitem[Ch]{Ch} [Ch] Gregory R. Chambers (2015). Proof of the Log-Convex Density Conjecture. J. Eur. Math. Soc., to appear.

% \bibitem[CCWB]{CCWB} [CCWB] Miguel Carri\'on \'Alvarez, Joseph Corneli, Genevieve Walsh, 
% Shabnam Beheshti. {Double bubbles in the three-torus}, Exp. Math. 12 (2002), 79-89.

% \bibitem[CH]{CH} [CH] Joseph Corneli, Paul Holt, George Lee, Nicholas Leger, Eric Schoenfeld, Benjamin Steinhurst. The double bubble problem on the flat two-torus, Trans. Amer. Math. Soc.
% 356 (2004), 3769-3820. 
 
% \bibitem[CHH]{CHH} [CHH] Ivan Corwin, Neil Hoffman, Stephanie Hurder, Vojislav Sesum, Ya Xu. {Differential geometry with manifolds with density}, 
% Rose-Hulman Und. Math. J. 7 (1) (2006).

%\bibitem[FABHZ]{F} [FABHZ] Joel Foisy, Manual Alfaro, Jeffrey Brock, Nickelous Hodges, Jason Zimba. {The standard double soap bubble in $\R^2$ uniquely minimizes perimeter}, Pac. J. Math. 159 (1993), 47-59.

%\bibitem[HMRR]{HMRR} [HMRR] Michael Hutchings, Frank Morgan, Manuel Ritor\'e, Antonio Ros. {Proof of the double bubble conjecture}, Ann. Math. 155 (2002), 459-489.

\bibitem[M]{M} [M] Frank Morgan (2016). {Geometric Measure Theory: A Beginner's Guide}, Academic press.

% \bibitem[M2]{M2} [M2] Frank Morgan. {Regularity of isoperimetric hypersurfaces in Riemannian manifolds}, Trans. Amer. Math. Soc. 355
% (2003), 5041-5052.

% \bibitem[M3]{M3} [M3] Frank Morgan. {Soap bubbles in $\R^2$ and in surfaces}, Pac. J. Math. 165 (2) (1994), 347-361.

\bibitem[MN]{MN} [MN] Emanuel Milman, Joe Neeman (2018). 
The Gaussian double-bubble conjecture, preprint. \url{https://arxiv.org/abs/1801.09296}.

\bibitem[MP]{MP} [MP] Frank Morgan, Aldo Pratelli (2013). Existence of isoperimetric regions in $\R^n$ with density, Ann. Glob. Anal. Geom. 43, 331-365.

\bibitem[MG]{MG} [MG] Ivor McGillivray (2017). An isoperimetric inequality in the plane with a log-convex density, preprint. \url{https://arxiv.org/abs/1612.07052}.

% \bibitem[R]{R} [R] Ben W. Reichardt. {Proof of the Double Bubble Conjecture in $R^n$}, J. Geom. Anal. 18 (2008), 172-191.

\bibitem[RCBM]{RCBM} [RCBM] C\'esar Rosales, Antonio Ca\~nete, Vincent Bayle, Frank Morgan (2008). {On the isoperimetric problem in Euclidean space with density}, Calc. Var. 31, 27-46.

\bibitem[So1]{So1} [So1] Nat Sothanaphan (2018). 1D Triple Bubble Problem with Log-Convex Density. \url{https://arxiv.org/abs/1805.08377}.

\bibitem[So2]{So2} [So2] Nat Sothanaphan. Double Bubbles on the Line with Log-convex Density $f$ with $(\log f)'$ Bounded, Missouri J. Math. Sci. 30 (2018), no. 2, 166--175. \url{ https://arxiv.org/abs/1807.02661}.

% \bibitem[W]{W} [W] Wacharin Wichiramala. {The planar triple bubble problem}, J. Reine. Angew. Math. 567 (2004), 1-49.

% \bibitem {D} R. A. DeVore, \textit{Approximation of functions},
% Proc. Sympos. Appl. Math., vol. 36,
% Amer. Math. Soc., Providence, RI, 1986, pp. 34--56.

\end{thebibliography}

\vspace{1cm}
\noindent Eliot Bongiovanni\\
Michigan State University \\
\url{eliotbonge@gmail.com }

\vspace{.7cm}
\noindent Leonardo Di Giosia\\
Rice University \\
\url{lsd2@rice.edu}

\vspace{.7cm}
\noindent Alejandro Diaz\\
University of Maryland, College Park \\
\url{diaza5252@gmail.com}

\vspace{.7cm}
\noindent Jahangir Habib\\
Williams College \\
\url{jih1@williams.edu }

\vspace{.7cm}
\noindent Arjun Kakkar\\
Williams College \\
\url{ak23@williams.edu }

\vspace{.7cm}
\noindent Lea Kenigsberg\\
Columbia University in the City of New York\\
\url{lk2720@columbia.edu}

\vspace{.7cm}
\noindent Dylanger Pittman\\
Williams College\\
\url{dsp1@williams.edu}

\vspace{.7cm}
\noindent Nat Sothanaphan\\
Massachusetts Institute of Technology \\
\url{natsothanaphan@gmail.com}

\vspace{.7cm}
\noindent Weitao Zhu\\
Williams College \\
\url{wz1@williams.edu }

\end{document}